%
%
%
\RequirePackage[l2tabu,orthodox]{nag} 
\documentclass[11pt,headings=standardclasses,abstract=true]{scrartcl}
%
%
%
\usepackage[utf8]{inputenc}
\usepackage{ulem}
\usepackage[margin=30mm,right=37mm]{geometry}
\usepackage{authblk}
\usepackage[T1]{fontenc}        
\usepackage{lmodern}            
\usepackage[intlimits]{amsmath} 

\usepackage{hyperref}           
\usepackage{grffile}            

\usepackage[ugly]{units}        

\usepackage{url}                
\usepackage{xspace}             
\usepackage{xcolor}             
\usepackage{booktabs}           

\usepackage{listings}           

\usepackage[textwidth=30mm]{todonotes}
\usepackage{enumitem}
\usepackage{mathtools}
\usepackage{verbatim}
\usepackage{accents}

\usepackage{kbordermatrix}

\usepackage{amssymb}

\usepackage{amsthm}

\newtheorem{theorem}{Theorem}
\newtheorem{definition}[theorem]{Definition}
\newtheorem{ass}[theorem]{Assumption}
\newtheorem{corollary}[theorem]{Corollary}
\newtheorem{mylemma}[theorem]{Lemma}
\newtheorem{proposition}[theorem]{Proposition}
\theoremstyle{remark}
\newtheorem{remark}[theorem]{Remark}


\if 11
\newcommand\myopbf\bf                     
\newcommand\myboldmath\boldsymbol
\newcommand{\vect}[1]{{\mathbf {#1}}} 		
\else
\newcommand\myopbf\rm                           
\newcommand{\vect}[1]{#1}		
\newcommand\myboldmath\unboldmath
\fi

\newcommand*{\mydot}[1]{%
  \accentset{\mbox{\large\bfseries .}}{#1}}

\if 0 
\newcommand{\Atop}[1][{}]{\vect{A}_{\!#1}}
\newcommand{\jcurrv}[1][{}]{\myboldmath\jmath_{#1}}  %
\newcommand{\npot}[1][{}]{{\vect{e}}_{{#1}}}
\newcommand{\charge}[1][{}]{{\vect{q}}_{{#1}}}

\newcommand{\flux}[1][{}]{\myboldmath\phi_{{#1}}}
\newcommand{\volt}[1][{}]{{\vect{v}}_{{#1}}}
\newcommand{\vsrc}[1][{}]{{\vect{v}}_{{#1}}}
\newcommand{\csrc}[1][{}]{{\myboldmath\imath}_{{#1}}}
\newcommand{\resi}[1][{}]{{\vect{g}}}

\newcommand{\zeros}{\vect{0}}

\else
\newcommand{\Atop}[1][{}]{{A}_{#1}}
\newcommand{\jcurrv}[1][{}]{\jmath_{#1}}
\newcommand{\djcurrv}[1][{}]{\mydot{\jmath}_{#1}}
\newcommand{\tjcurrv}[1][{}]{\tilde{\jmath}_{#1}}
\newcommand{\npot}[1][{}]{{{e}}_{{#1}}}
\newcommand{\charge}[1][{}]{{q}_{{#1}}}

\newcommand{\flux}[1][{}]{\phi_{{#1}}}
\newcommand{\volt}[1][{}]{{{v}}_{{#1}}}
\newcommand{\vsrc}[1][{}]{{{v}}_{{#1}}}
\newcommand{\csrc}[1][{}]{{\imath}_{{#1}}}
\newcommand{\resi}[1][{}]{{{g}}}

\newcommand{\zeros}{{0}}
\renewcommand{\vect}[1]{#1}		
\renewcommand\myboldmath\unboldmath
\fi

\newcommand{\interval}{\mathcal{I}}

\newcommand{\tim}{t}

\newcommand{\Cnet}{\vect{C}}
\newcommand{\Lnet}{\vect{L}}

\newcommand{\tr}{\top}

\usepackage{bbm}
\newcommand{\real}{\mathbbm{R}}
\newcommand{\diag}{\text{diag}}
\newcommand{\blkdiag}{\text{blkdiag}}

\newcommand{\diff}{\,\textrm{\normalfont d}}


\newcommand{\ddt}{\frac{\diff}{\diff t}}

\DeclareMathOperator{\init}{init}
\DeclareMathOperator{\ter}{ter}
\DeclareMathOperator{\im}{im}

\newcommand{\lambdav}{\myboldmath\lambda}
\newcommand{\tot}{\scriptstyle\text{tot}}

\newcommand{\numsys}{k}
\definecolor{darkgreen}{rgb}{0.15,0.65,0.15}
\newcommand{\myendofenv}{\qed}

\begin{document}

\title{Dynamic iteration schemes and port-Hamiltonian formulation in coupled DAE circuit simulation}

\author[1]{Michael~G\"unther\thanks{guenther@uni-wuppertal.de}}
\author[1]{Andreas~Bartel\thanks{bartel@uni-wuppertal.de}}
\author[1]{Birgit~Jacob\thanks{bjacob@uni-wuppert
al.de}}
\author[2]{Timo~Reis\thanks{timo.reis@uni-hamburg.de}}
\affil[1]{Bergische Universität Wuppertal\\ 
Fakult\"at für Mathematik und Naturwissenschaften\\
Gau\ss straße 20\\
42119 Wuppertal, Germany}
\affil[2]{Universit\"at Hamburg\\ Fachbereich Mathematik\\ Bundesstra\ss e~55\\ 20146 Hamburg, Germany}

\renewcommand\Affilfont{\itshape\small}

\date{}
\maketitle

\begin{abstract}
Electric circuits are usually described by charge- and flux-oriented modified nodal analysis.
In this paper, we derive models as port-Hamiltonian systems on several levels: overall systems, multiply coupled systems and systems within dynamic iteration procedures. To this end,  we introduce 
 new classes 
of port-Hamiltonian differential-algebraic equations.
 Thereby, we additionally \color{black} 
allow for nonlinear dissipation on a subspace of the state space. Both, each subsystem and the overall system, possess a port-Hamiltonian structure.  A structural analysis is performed for the new setups. \color{black}
Dynamic iteration schemes are investigated and we show that the Jacobi approach as well as an adapted Gauss-Seidel approach lead to port-Hamiltonian differential-algebraic equations.
\color{black}
\end{abstract}

\hspace*{0.5cm}

\noindent\textbf{Keywords:} differential-algebraic equations, electrical circuits, port-Hamiltonian systems, dynamic iteration

\hspace*{0.5cm}

\noindent\textbf{AMS subject classification:}
34A09, 
37J05, 
65L80, 
94C05, 
94C15  

\section{Introduction}
Models for electric circuits are based on a collection of basic electric components. These form 
edges of a directed graph. 
The directed graph represents the interconnection structure, which is represented by the incidence matrix $\Atop[]$  that enables to formulate Kirchhoff's voltage law (KVL) and Kirchhoff's current law (KCL).  
Electric components describe a certain electric effect. 
In our case, these are 
resistances, capacitances, inductances, independent current and independent voltage sources. \color{black}
%

An oftentimes used modeling approach to electric circuits is the 
modified nodal analysis (MNA), see \textsc{McCalla} \cite{McCalla_1988}. 
For charge and flux conservation, this is extended to the charge/flux-oriented form, see \textsc{G\"unther \& Feldmann} \cite{Guenther_1999}.
%
Now, the KVL allows the assignment of vertex potentials (often referred tp as node potentials) $\npot{}$ to each vertex except for the grounded one which has a given value. 
Apart for the vertex potentials, one has as unknowns the currents through inductances $\jcurrv[L]$ and through voltages sources $\jcurrv[V]$, 
%
the charges $\charge[C]$ at the capacitances and the magnetic fluxes $\flux[L]$ at the inductances.
\color{black}
%
Thus,  
the vector of unknowns reads
\[
	x^\tr(t)=\bigl(\npot{}^\tr (t),\, \jcurrv[L]^\tr (t),\, \jcurrv[V]^\tr (t),\, \charge[C]^\tr (t),\, \flux[L]^\tr (t)\bigr) \in \real^{d}, 
\]
\color{black}
where the time $t$ evolves in a~specified operation interval $\interval:=[0, t_e]\subseteq \real$.
The circuit can now be described by the equations of {\em charge/flux-oriented modified nodal analysis (MNA)}, which reads
\begin{subequations}\label{eq:mna}
	\begin{align}
	\label{eq:kcl}
	\Atop[C]\frac{\mathrm{d}}{\mathrm{d} t} \charge[C]
	+\Atop[R] \resi[R](\Atop[R]^{\tr} \npot{})
	+\Atop[L] \jcurrv[L]
	+\Atop[V] \jcurrv[V]
	+\Atop[I] \csrc(\tim)
	& = \zeros,\\
    \frac{\mathrm{d}}{\mathrm{d} t} \flux[L] - \Atop[L]^{\tr} \npot{}
	& =  \zeros, \label{eq:flux} \\
	\Atop[V]^{\tr} \npot{} - \vsrc(\tim)
	& =  \zeros , 	\label{eq:voltsrc}\\
\charge[C]-\charge(\Atop[C]^{\tr}\npot)&=\zeros,\\
\flux[L]-\flux(\jcurrv[L])&=\zeros,
	\end{align}
\end{subequations}
where we have component-specific incidences matrices $\Atop[\star]$. 
Moreover, we use for the component relations:
$\charge(\volt)$ for capacitances, $\resi(\volt)$ for resistances,  $\flux(\jcurrv[L])$ for inductances,
$\vsrc(t)$ for independent voltage sources and $\csrc(t)$ for independent current sources, where the latter two variables are given beforehand. The involved matrices and functions are further specified in the forthcoming Section~\ref{sec:strucana}.

One aim of this paper is to model the MNA as port-Hamiltonian DAE. Port-Hamiltonian systems  form a joint structure of systems in various physical domains.
This approach has its roots in analytical mechanics and starts from the principle of least action, and proceeds towards the Hamiltonian equations of motion.
Dynamic systems, which result from variational principles, can usually be modeled by a port-Hamiltonian system. A system theoretical and geometric treatment of port-Hamiltonian ordinary differential systems goes back to \textsc{van der Schaft}  and  there is by now a well-established theory
(see \textsc{van der Schaft}~\cite{vdS04} and \textsc{Jeltsema \& van der Schaft}~\cite{JvdS14} for an overview), 
which has been applied to electrical circuits, \textsc{Gernandt} et al.~\cite{GeHaReVdS20}.  
Only recently the concept has been generalized to port-Hamiltonian differential-algebraic systems, that is, ordinary differential equations with algebraic constrains, 
(see \textsc{van der Schaft}~\cite{vdS13}, \textsc{Maschke \& van der Schaft}~\cite{MvdS18,MaVdS19}). 
In \textsc{Beattie} et al.~\cite{Beattie_2018},
linear time-varying port-Hamiltonian differential-algebraic systems have been studied and the notion has been generalized to quasilinear systems in \textsc{Mehrmann \& Morandin}~\cite{MehM19ppt}. 

Now, we extend the class even further in order to allow for nonlinear dissipation on a subspace of the state space. We introduce two circuit models throughout this article, which are slightly different from the charge/flux oriented MNA~\eqref{eq:mna}. Both models are formulated as port-Hamiltonian DAE. Furthermore, we investigate 
 multiply coupled circuits and extend our definitions in this respect to multiply coupled 
port-Hamiltonian DAEs. In fact,  we show that port-Hamiltonian DAEs can be coupled in such a way that the overall system is a port-Hamiltonian DAE as well. This is applied 
to our circuits models.

 A further 
novelty of this paper is the study of dynamic iteration schemes
in the context of port-Hamiltonian systems. For an overview on dynamic iteration schemes for ODEs, see \textsc{Burrage}~\cite{Burrage_1995}. These schemes have also been studied for DAEs, where convergence cannot be generally guaranteed, see e.g. \textsc{Lelarsemee} et al.~\cite{LRS82}, \textsc{Jackiewicz \& Kwapisz}~\cite{ZK96}
and \textsc{Arnold \& G\"unther}~\cite{AG01}.
Here, we investigate dynamic iteration schemes for coupled systems composed by $\numsys$ subsystems with dedicated coupling equation. For these type systems, we show that both,
Jacobi- and Gauss-Seidel type schemes
can be interpreted as port-Hamiltonian systems. In order to achieve this goal we have modify slightly the interconnections. Again as an example we study electric circuits.

The outline of the paper is as follows: Section~\ref{sec:strucana} addresses the mathematical modeling background for the charge/flux oriented circuit equations. In the following Section~\ref{sec:phs.for.networks} the various port-Hamiltonian formulations are introduced. Then, a DAE index analysis is performed for our models (Section~\ref{sec:IndAna}). Section~\ref{sec:network-network} introduces structural properties for coupled circuits and Section~\ref{sec:pHsys} merges the port-Hamiltonian formulation with the dynamic iteration schemes. Finally, there are conclusions. 

\section{Circuit equations - a structural analysis}\label{sec:strucana}

We will consider special variants of the charge/flux-oriented MNA equations \eqref{eq:mna},  suitable for the port-Hamiltonian setting. To this end, we first present some fundamentals on circuit equations. An electrical circuit is described by the properties of its components together with the interconnection structure. The latter is modelled by a~(loop-free, directed and finite) graph. Moreover, many properties of the circuit equations such as soundness, passivity and DAE-index, depend both on topological conditions of the underlying graph, as for instance about the absence of certain component-specific cycles and and cuts (see e.g.~\textsc{Bartel et al.}~\cite{BBS11} and \textsc{Bartel \& G\"unther}~\cite{BaGu2018}). To this end, we need some preliminaries from graph theory, see e.g.~\cite{Dies17}.

\begin{definition}[Graphs and subgraphs]
A \emph{directed graph} is a tuple $\mathcal{G} = (V,E,\init,\ter)$ consisting of a \emph{vertex set} $V$, a \emph{edge set} $E$ and two maps $\init, \ter: E \rightarrow V$ as\-signing to each edge $e$ an {\em initial vertex} $\init(e)$ and a {\em terminal vertex} $\ter(e)$. The edge $e$ is said to be {\em directed from $\init(e)$ to $\ter(e)$}. $\mathcal{G}$ is said to be {\em loop-free}, if $\init(e)\neq\ter(e)$  for all $e\in E$. Let $V' \subset V$ and $E' \subset E$ with
\[ E' \subset \left. E\right|_{V'} := \{ e \in E : \init(e)\in V'\,\wedge\, \ter(e) \in V' \}. \]
Then the triple $(V',E',\left. \init\right|_{E'},\left. \ter\right|_{E'})$ is called a \emph{subgraph of $\mathcal{G}$}. If $E' = \left. E\right|_V'$, then the subgraph is called the \emph{induced subgraph} on $V'$.
If $V' = V$, then the subgraph is called \emph{spanning}.
Additionally a \emph{proper subgraph} is one where $E' \neq E$.
$\mathcal{G}$ is called {\em finite}, if $V$ and $E$ are finite.
\end{definition}
The notion of a~{\em path} in a~directed graph $\mathcal{G} = (V,E,\init,\ter)$ is quite descriptive. However, since a~path may also go through an edge in reverse direction, we define for each $e \in E$ an additional edge $-e \not\in E$ with $\init(-e)=\ter(e)$ and $\ter(-e)=\init(e)$.
\begin{definition}[Paths, connected, cycles, cuts]
Let $\mathcal{G} = (V,E,\init,\ter)$ be a~finite directed graph and let $\mathcal{K} = (V,E', \left. \init\right|_{E'}, \left. \ter\right|_{E'})$ be a~spanning subgraph.\\
A~$r$-tuple $e = (e_1,\ldots,e_r) \in ({E}\cup -E)^r$ is called a~\emph{path from $v$ to $w$}, if the initial vertices $\init(e_1),\ldots,\init(e_r)$ are distinct, $\ter(e_i) = \init(e_{i+1})$ for all $i \in \{ 1,\ldots, r-1 \}$, as well as $\init(e_1)=v$ and $\ter(e_r)=w$.\\
A \emph{cycle} is a~path from $v$ to $v$. Two vertices $v,w$ are \emph{connected}, if there exists a path from $v$ to $w$. This gives an equivalence relation on the vertex set. The induced subgraph on an equivalence class of connected vertices gives a \emph{component} of the graph. A graph is called {\em connected}, if there is only one component.\\
$\mathcal{K}$ is called a \emph{cut} of $\mathcal{G}$, if $\mathcal{G} - \mathcal{K} := (V, E \backslash E', \left.\init\right|_{E \backslash E'}, \left.\ter\right|_{E \backslash E'})$ has two connected components.
\end{definition}
In the context of electrical circuits, finite and loop-free directed graphs are of major importance. These allow to associate a~special matrix, see \textsc{Andr\'asfai}~\cite[Sec.~3.2]{And91}.
\begin{definition}[Incidence matrix]
Let $\mathcal{G}=(V,E,\init,\ter)$ be a finite and loop-free directed graph. Let $E=\{e_1,\ldots,e_m\}$ and $V=\{v_1,\ldots,v_n\}$. Then the \emph{all-vertex incidence matrix} of $\mathcal{G}$ is $A_0\in\real^{n\times m}$ with
\[a_{jk}=\begin{cases}1&\init(e_k)=v_j, \\-1&\ter(e_k)=v_j,\\0&\text{otherwise.}\end{cases}\]
\end{definition}
If $\mathcal{G}$ is connected, then the co-rank of $A_0$ equals one, whence the deletion of an arbitrary row leads to a~matrix with full row rank \cite[p.\ 140]{And91}. In the context of electrical circuits, this corresponds to the grounding of this vertex. 

Starting with an incidence matrix $A$ of a~finite and loop-free directed graph $\mathcal{G}$, along with a spanning subgraph $\mathcal{K}$ of $\mathcal{G}$, it is possible to obtain an incidence matrix of $\mathcal{K}$ by deleting all columns corresponding to edges of $\mathcal{G} - \mathcal{K}$. By rearranging the columns, it follows that the matrix $A$ is of the form
\begin{equation}\label{sorted}
A = [A_{\mathcal{G}-\mathcal{K}} \: A_{\mathcal{K}} ].
\end{equation}
Next we collect some auxiliary results on incidence matrices corresponding to subgraphs from \textsc{Est\'evez Schwarz \& Tischendorf}~\cite{Estevez-Schwarz_2000aa}. Note that this reference has wording which slightly differs from ours, as, for instance, cycles are called {\em loops} therein. Our notation is oriented by the standard reference \textsc{Diestel}~\cite{Dies17} for graph theory. The first statement of the following proposition can be inferred from the fact that incidence matrices of connected (sub-)graphs have full row rank. The further assertions are shown in \cite{Estevez-Schwarz_2000aa}.
\begin{proposition}\cite[Thm.\ 2.2]{Estevez-Schwarz_2000aa}\label{graphalg} 
Let $\mathcal{G}$ be a finite and loop-free connected graph with incidence matrix $A$ and let $\mathcal{K}$ be a spanning subgraph.
Assume that the incidence matrix is partitioned as in \eqref{sorted}.
Moreover, let $\mathcal{L}$ be a spanning subgraph of $\mathcal{K}$, and, likewise, 
that $A_{\mathcal{K}}$ is partitioned as 
\begin{equation}\label{sorted2}
A_{\mathcal{K}} = [A_{\mathcal{K} - \mathcal{L}}\, A_{\mathcal{L}} ].
\end{equation}
Then the following holds:
\begin{enumerate}[label = (\roman*)]
 \item $\mathcal{G}$ does not contain any cuts only consisting of edges in $\mathcal{K}$ if, and only if, $\ker A^\top_{\mathcal{G} - \mathcal{K}} = \{0\}$. 
 \item $\mathcal{G}$ does not contain any cycles   only consisting of edges in $\mathcal{K}$ if, and only if, $\ker A_{\mathcal{K}} = \{0\}$. 
 \item $\mathcal{G}$ does not contain any cycles  only consisting of edges in $\mathcal{K}$ except for cycles only consisting of edges in $\mathcal{L}$ if, and only if, 
 \[\{x\in\real^{n_{\mathcal{K} - \mathcal{L}}}\,|\, A_{\mathcal{K} - \mathcal{L}}x\in\im A_{\mathcal{L}}\}=\{0\}.\] 
\end{enumerate}
\end{proposition}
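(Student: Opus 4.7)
The plan is to exploit a standard dictionary between the incidence matrix of a spanning subgraph and its graph structure, applied successively to $\mathcal{G}-\mathcal{K}$, $\mathcal{K}$, and the pair $(\mathcal{K},\mathcal{L})$. For any spanning subgraph $\mathcal{H}$ with incidence matrix $A_\mathcal{H}$ (obtained by deleting the same grounded row as in $A$), I would first recall two facts: $\ker A_\mathcal{H}^\top$ parametrizes vertex potentials that are constant on each connected component of $\mathcal{H}$ and vanish on the grounded component, so its dimension equals the number of components of $\mathcal{H}$ minus one (by KVL along a spanning forest of $\mathcal{H}$); and $\ker A_\mathcal{H}$ equals the real cycle space of $\mathcal{H}$, every element of which decomposes into signed indicator vectors of cycles via a fundamental-cycle basis relative to a spanning forest.

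For (i), absence of a cut of $\mathcal{G}$ contained in $\mathcal{K}$ is equivalent to $\mathcal{G}-\mathcal{K}$ being connected: if $\mathcal{G}-\mathcal{K}$ has at least two components, then a minimal subset of $\mathcal{K}$-edges that still separates two vertices from distinct components of $\mathcal{G}-\mathcal{K}$ has the property that its removal leaves exactly two components (any superfluous edge would contradict minimality), which is a cut of $\mathcal{G}$ lying in $\mathcal{K}$; the converse is immediate since $\mathcal{G}-\mathcal{C}\supseteq\mathcal{G}-\mathcal{K}$ for any subgraph $\mathcal{C}$ of $\mathcal{K}$. Connectedness of $\mathcal{G}-\mathcal{K}$ is then exactly the statement $\ker A_{\mathcal{G}-\mathcal{K}}^\top=\{0\}$ by the first dictionary item. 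For (ii), cycles of $\mathcal{G}$ built from $\mathcal{K}$-edges are literally cycles of the subgraph $\mathcal{K}$, so their absence is equivalent to triviality of the cycle space of $\mathcal{K}$, which by the second dictionary item is $\ker A_\mathcal{K}=\{0\}$.

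For (iii), using the block partition $A_\mathcal{K}=[A_{\mathcal{K}-\mathcal{L}}\;A_\mathcal{L}]$, one sees that $A_{\mathcal{K}-\mathcal{L}}x\in\im A_\mathcal{L}$ holds if and only if $(x,-y)^\top\in\ker A_\mathcal{K}$ for some $y$; hence the set in the statement is the image of $\ker A_\mathcal{K}$ under projection onto its first block, and its triviality is equivalent to every element of the cycle space of $\mathcal{K}$ having a vanishing $\mathcal{K}-\mathcal{L}$ component. I would then translate this algebraic condition into the graph-theoretic statement ``every cycle of $\mathcal{K}$ lies in $\mathcal{L}$'' as follows: the easy direction observes that an $\mathcal{L}$-cycle's indicator vector trivially has zero $\mathcal{K}-\mathcal{L}$ block, while for the converse I would take any cycle of $\mathcal{K}$ using an edge $e\in\mathcal{K}-\mathcal{L}$ and use its signed edge-indicator vector, which lies in $\ker A_\mathcal{K}$ and has a nonzero coordinate in position $e$, producing the required nonzero $x$ directly.

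The step I expect to require the most care is this last translation in (iii), because working from the algebraic side it is tempting to invoke abstract spanning-set arguments that can be obscured by signed cancellations between indicator vectors of different cycles. Using single-cycle indicator vectors rather than arbitrary linear combinations avoids this, at the cost of having to be precise about the signed-indicator convention: I would fix this convention once in the preliminaries establishing the cycle-space dictionary, so that the argument in (iii) reduces to a direct inspection of a single coordinate.
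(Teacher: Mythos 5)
Your argument is correct, but it cannot be compared line-by-line with a proof in the paper, because the paper gives none: Proposition~\ref{graphalg} is quoted from \cite{Estevez-Schwarz_2000aa}, and the only in-text justification is the remark that part (i) can be inferred from the full row rank of incidence matrices of connected (sub)graphs --- which is precisely your first ``dictionary'' item, since for a spanning subgraph $\mathcal{H}$ the left kernel of $A_{\mathcal{H}}$ is spanned by the indicator vectors of those components of $\mathcal{H}$ not containing the grounded vertex, so $\ker A_{\mathcal{H}}^{\top}=\{0\}$ if and only if $\mathcal{H}$ is connected. What you supply beyond the citation is the graph-theoretic reductions plus the kernel/cycle-space dictionary: for (i) the equivalence ``no cut of $\mathcal{G}$ inside $\mathcal{K}$'' $\Leftrightarrow$ ``$\mathcal{G}-\mathcal{K}$ connected'', obtained via a minimal separating subset of $\mathcal{K}$-edges; for (ii) the identification of $\ker A_{\mathcal{K}}$ with the cycle space of $\mathcal{K}$ (deleting the grounded row does not change the kernel because the rows of the all-vertex incidence matrix sum to zero); and for (iii) the observation that $\{x\,|\,A_{\mathcal{K}-\mathcal{L}}x\in\im A_{\mathcal{L}}\}$ is exactly the projection of $\ker A_{\mathcal{K}}$ onto the $\mathcal{K}-\mathcal{L}$ block, so its triviality says every cycle vector of $\mathcal{K}$ is supported in $\mathcal{L}$. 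This gives a self-contained, elementary proof where the paper merely outsources; the only places to tighten when writing it out are (a) in (i), the claim ``minimal $\Rightarrow$ exactly two components'': if a third component remained, an edge of the removed set incident to it could be restored without reconnecting the two chosen vertices, contradicting minimality; and (b) in (iii), the direction ``all cycles of $\mathcal{K}$ lie in $\mathcal{L}$ $\Rightarrow$ the set is $\{0\}$'' needs not only that a single $\mathcal{L}$-cycle vector has vanishing $\mathcal{K}-\mathcal{L}$ block, but that cycle indicator vectors (e.g.\ a fundamental-cycle basis) span $\ker A_{\mathcal{K}}$ --- a fact your dictionary already records, so just make the dependence explicit.
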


When considering an electrical circuit as a~graph, we can split the incidence matrix into submatrices respectively representing the columns to capacitances, resistances, inductances, voltage sources and current sources, i.e.,
\[(\Atop[C]\, \Atop[R]\, \Atop[L]\, \Atop[I]\, \Atop[V] ).\]
In other words, we consider the incidence matrices of the spanning subgraphs formed by specific electrical components.
Now we are able to formulate our assumptions on the circuit.

%
%

%

\begin{ass}
\label{ass:soundness-passivity}
%
\begin{enumerate}[leftmargin=.25in]\
 \item[a)] 
 \textbf{Soundness.} The circuit graph has at least one edge and is connected.
The circuit graph further neither contains cycles consisting only of edges of voltage sources nor cuts consisting only of edges of current sources. Equivalently, by Proposition~\ref{graphalg},  
 $\Atop[V]$ and $\displaystyle(\Atop[C]\, \Atop[R]\, \Atop[L]\, \Atop[V] )^{\tr}$ have full column rank.
\item[b)] \textbf{Passivity.} The functions $\charge$, $\flux$ and $\resi$ fulfill
\begin{itemize}
\item[(i)] $\charge:\real^{n_C}\to\real^{n_C}$ and $\flux:\mathbbm{R}^{n_L}\to\mathbbm{R}^{n_L}$ are bijective, continuously differentiable, and their Jacobians 
\[
    C(u_C):=\,\frac{\partial \charge}{\partial u_C}(u_C),
    \qquad
    L(\jcurrv[L]):=\,\frac{\partial \flux}{\partial          
                \jcurrv[L]}(\jcurrv[L])
\]
are symmetric and positive definite for all $u_C\in \real^{n_C}$, $\jcurrv[L]\in \real^{n_L}$.
\item[(ii)] $\resi:\mathbbm{R}^{n_R}\to\mathbbm{R}^{n_R}$ is continuously differentiable, and its Jacobian has the property that $\frac{\partial g}{\partial u_R}(u_R)+ \frac{\partial g}{\partial u_R}(u_R)^{\tr}$ is positive definite for all $u_R\in\real^{n_R}$.
\end{itemize}
\end{enumerate}
\end{ass}
The condition on the charge and flux functions imply that there exist certain scalar-valued functions which will later on be shown to be expressing the energy of an electrical circuit.
\begin{proposition}\label{prop:energy}
If $\charge:\real^{n_C}\to\real^{n_C}$ and $\flux:\mathbbm{R}^{n_L}\to\mathbbm{R}^{n_L}$ fulfill Assumption~\ref{ass:soundness-passivity}b)(i), then there exist twice continuously differentiable functions 
$V_C:\real^{n_C}\to\real$, $V_L:\real^{n_L}\to\real$ with the following properties:
\begin{itemize}
\item[(a)] $V_C:\real^{n_C}\to\real$, $V_L:\real^{n_L}\to\real$ are strictly convex, that is,
\begin{align*}
&\forall\, \lambda\in[0,1]:\\
& 
    \forall\, \charge[C,1],\charge[C,2]\in\real^{n_C}: \; V_C(\lambda\charge[C,1]+(1-\lambda)\charge[C,2])<
\lambda V_C(\charge[C,1])+(1-\lambda)V_C(\charge[C,2]),
\\
& 
    \forall\, \flux[L,1],\flux[L,2]\in\real^{n_L}: \; 
        V_L(\lambda\flux[L,1]+(1-\lambda)\flux[L,2])<
        \lambda V_L(\flux[L,1])+(1-\lambda)V_L(\flux[C,2]),
\end{align*}
\item[(b)] The gradients of $V_C$ and $V_L$ are, respectively, the inverse functions of $\charge$ and $\flux$. That is, 
\[\begin{aligned}
&\forall\, \charge[C]\in\real^{n_C}:\; &\nabla V_C(\charge[C])=&\charge^{-1}(\charge[C]),\\
&\forall\, \flux[L]\in\real^{n_L}:\; &\nabla V_L(\charge[L])=&\flux^{-1}(\flux[L]).
\end{aligned}\]
\item[(c)] $V_C$ and $V_L$ take, except for one $\charge[C]^*\in\real^{n_C}$ (resp.~$\flux[L]^*\in\real^{n_L}$), positive values. That is, there exist $\charge[C]^*\in\real^{n_C}$, $\flux[L]^*\in\real^{n_L}$ such that $V_C(\charge[C])>0$ and $V_L(\flux[L])>0$ for all $\charge[C]\in\real^{n_C}\setminus\{\charge[C]^*\}$ and
$\flux[L]\in\real^{n_L}\setminus\{\flux[L]^*\}$.
\end{itemize}
\end{proposition}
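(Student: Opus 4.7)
My plan is to construct $V_C$ as a scalar potential of $\charge^{-1}$ via Poincar\'e's lemma, and then to read off the three claims directly; the construction of $V_L$ is entirely analogous with $\flux$ in place of $\charge$, and I only sketch the $V_C$ case.

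First, I apply the inverse function theorem to $\charge:\real^{n_C}\to\real^{n_C}$: it is a $C^1$ bijection whose Jacobian $C(u_C)$ is symmetric positive definite, hence invertible, at every $u_C\in\real^{n_C}$. Therefore $\charge^{-1}$ is $C^1$ with Jacobian $C(\charge^{-1}(\charge[C]))^{-1}$, which is again symmetric positive definite at every $\charge[C]\in\real^{n_C}$. Since $\real^{n_C}$ is simply connected and $\charge^{-1}$ has symmetric Jacobian, Poincar\'e's lemma yields a $C^2$ scalar function $V_C:\real^{n_C}\to\real$, unique up to an additive constant, with $\nabla V_C=\charge^{-1}$. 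This is exactly (b), and strict convexity in (a) then follows from the standard fact that a $C^2$ function on $\real^{n_C}$ with everywhere positive definite Hessian is strictly convex; here the Hessian of $V_C$ is precisely the Jacobian of $\charge^{-1}$.

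For (c), the equation $\nabla V_C(\charge[C])=\charge^{-1}(\charge[C])=0$ has the unique solution $\charge[C]^*:=\charge(0)$ because $\charge$ is bijective. Strict convexity together with $C^2$ regularity then promote this unique critical point to the unique global minimum of $V_C$. Fixing the free additive constant so that $V_C(\charge[C]^*)=0$ yields $V_C(\charge[C])>0$ for all $\charge[C]\ne\charge[C]^*$, as required, and setting $\flux[L]^*:=\flux(0)$ handles the $V_L$ side identically. The subtle step is exactly this last one: strict convexity on $\real^{n_C}$ alone does not force a global minimum to exist (e.g.\ $u\mapsto e^u$ on $\real$). The existence of a minimum here comes not from convexity but from the bijectivity of $\charge$, which is what makes $\charge^{-1}(\charge[C])=0$ solvable; strict convexity merely upgrades this critical point to a unique global minimum.
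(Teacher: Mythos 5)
Your proposal is correct and follows essentially the same route as the paper's proof: obtain $V_C$ as a potential of $\charge^{-1}$ via the inverse function theorem, symmetry of the Jacobian and simple connectedness of $\real^{n_C}$, deduce strict convexity from pointwise positive definiteness, and normalize the value at the minimizer. In fact you make explicit a point the paper passes over with ``hence $V_C$ has a unique minimum'': the existence of the minimizer comes from solvability of $\charge^{-1}(\charge[C])=0$ (i.e.\ $\charge[C]^*=\charge(0)$, by bijectivity), not from strict convexity alone, which is a worthwhile clarification.
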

\begin{proof}
 By changing the roles of fluxes and charges, it suffices to prove the statement only for the charge function.\\
 Since $\charge$ is bijective and its derivative is, by positive definiteness of $C(u_C)$, invertible, the inverse function $\charge$ is continuously differentiable as well, and the Jacobian reads
\[
\frac{d\charge^{-1}}{\charge[C]}(\charge[C])=C(\charge^{-1}(\charge[C]))^{-1}.
\]
In particular, the Jacobian of $\charge^{-1}$ is pointwise symmetric and positive definite as well. This together with the trivial fact that $\real^{n_C}$ is simply connected implies that there exists some twice differentiable function $V_C:\real^{n_C}\to\real$ with $\nabla V_C(\charge[C])=\charge^{-1}(\charge[C])$ for all $\charge[C]\in\real^{n_C}$. The pointwise positive definiteness of 
$\frac{d\charge^{-1}}{\charge[C]}(\charge[C])$ implies that $V_C$ is strictly convex. Hence, $V_C$ has a~unique minimum
$\charge[C]^*\in\real^{n_C}$. Now replacing $V_C$ with the difference of $V_C$ and $V_C(\charge[C]^*)$, this function has the desired properties, and the proof is complete.
\end{proof}

\begin{remark}\label{rem:chargefluxbijective}\
\begin{itemize}
\item[(a)]
If $n_C=n_L=n_R=1$, then the conditions on $\charge$, $\flux$ and $\resi$ imply that these functions are strictly monotonically increasing with
    \[\lim_{u_C\to\pm\infty}\charge(u_C)=\pm\infty,\qquad
    \lim_{\jcurrv[L]\to\pm\infty}\flux(\jcurrv[L])=\pm\infty,
    \qquad \lim_{u_R\to\pm\infty} g(u_R)=\pm\infty.
    \]
\item[(b)]
 Bijectivity of $\charge$, $\flux$ might by difficult to check. A~sufficient condition can be inferred from the Hadamard-Levy Theorem \cite{Pla74}, which gives bijectivity of $\charge$ and $\flux$, if the conditions
\[\int_0^\infty\min_{\|u_C\|=r}\|C(u_C)^{-1}\|^{-1}=\infty,\quad
\int_0^\infty\min_{\|\jcurrv[L]\|=r}\|L(\jcurrv[L])^{-1}\|^{-1}=\infty.
\]
are fulfilled. By using the positive definiteness of $C(u_C)$ and $L(\jcurrv[L])$, the latter is equivalent to
\[\int_0^\infty\min_{\|u_C\|=r}\lambda_{\min}(C(u_C))=\infty,\qquad
\int_0^\infty\min_{\|\jcurrv[L]\|=r}\lambda_{\min}(L(\jcurrv[L]))=\infty,
\]
where $\lambda_{\min}$ denotes the smallest eigenvalue of a~matrix.
\myendofenv
\end{itemize}
\end{remark}
We will discuss two circuit model throughout this article, which are slightly different from the charge/flux oriented MNA \eqref{eq:mna}. Both models are formulated such that they fit into the PH-DAE framework introduced in Section~\ref{sec:phs.for.networks}.

 The first model is based on using both component equations for charges and fluxes: for the fluxes, we apply $\flux^{-1}$ to the equation $\flux[L]-\flux(\jcurrv[L])=0$ to obtain $\jcurrv[L]=\flux^{-1}(\flux[L])$ which is further eliminated. Likewise, $\charge^{-1}$ is applied to the equation $\charge[C]- \charge(\Atop[C]^{\tr} \npot)$ for the charges, which results into $\Atop[C]^{\tr}\npot{}-\charge^{-1}(\charge[C])$. Summing up, we get 
\color{black}
\begin{subequations}
\label{model1}
\begin{align}
    \nonumber
    \ddt\begin{pmatrix}
     \Atop[C] & 0 & 0 & 0 \\
        0 & I & 0 & 0 \\
        0 & 0 & 0 & 0 \\
        0 & 0 & 0 & 0 \\
    \end{pmatrix}
    \!\!
    \begin{pmatrix}
    \charge[C] \\ \flux[L] \\ \npot{} \\ \jcurrv[V]     \end{pmatrix}
    & =
    \begin{pmatrix}
         0 & -\Atop[L] & 0 & -\Atop[V] \\
         \Atop[L]^{\tr} & 0 & 0 & 0 \\
         0 & 0 & 0 & 0 \\
        \Atop[V]^{\tr} & 0 & 0 & 0 \\
    \end{pmatrix}
    \begin{pmatrix}
    \npot{} \\  \flux^{-1}(\flux[L]) \\ \charge^{-1}(\charge[C]) \\  \jcurrv[V]
    \end{pmatrix} 
    \\
    & \label{eq:mna.cond_a}
    \qquad -
    \begin{pmatrix}
    \Atop[R]g(\Atop[R]^{\tr}\npot{})\\0\\ 
        \Atop[C]^\tr \npot - \charge^{-1}(\charge[C]) \\0
    \end{pmatrix}+ \begin{pmatrix}
    -\Atop[I] & 0  \\ 0 & 0 \\ 0 & 0 \\ 0 & - I
    \end{pmatrix}
    \!\!\begin{pmatrix}
    \csrc({\tim}) \\ \vsrc({\tim})
    \end{pmatrix}\! ,
\end{align}
and output equation
\begin{align}
& y= 
\begin{pmatrix}
    -\Atop[I] & 0  \\ 0 & 0 \\ 0 & 0 \\ 0 & - I
    \end{pmatrix}^{\tr}
    \begin{pmatrix}
    \npot{} \\  \flux^{-1}(\flux[L]) \\ \charge^{-1}(\charge[C]) \\  \jcurrv[V]
    \end{pmatrix} 
=
\begin{pmatrix} -\Atop[I]^{\tr} e  \\ -\jcurrv[V]\end{pmatrix}.
 \end{align}
 \end{subequations}

In the second model, we further add the variable $\jcurrv[C]$ and the equation $\ddt \charge[C]=\jcurrv[C]$ to the model \eqref{model1}. Moreover, the expression $\ddt \charge[C]$ in the first equation of \eqref{model1} is replaced by $\jcurrv[C]$, which results into 
\begin{subequations}
\label{model2}\begin{align}
    \nonumber
    \ddt\begin{pmatrix}
     0 & 0 & 0 & 0 & 0 \\
     0 & 0 & 0 & 0 & 0 \\
     0 & 0 & I & 0 & 0 \\
     0 & 0 & 0 & I & 0 \\
     0 & 0 & 0 & 0 & 0 \\
    \end{pmatrix}
    \!\!
    \begin{pmatrix}
    \npot{} \\ {\jcurrv[C]} \\ {\charge[C]} \\ {\flux[L]} \\ {\jcurrv[V]}
    \end{pmatrix}
    & =
    \begin{pmatrix}
         0 & -\Atop[C] & 0 & -\Atop[L] & -\Atop[V] \\
        \Atop[C]^{\tr} & 0 & -I & 0 & 0 \\
        0 & I & 0 & 0 & 0 \\
        \Atop[L]^{\tr} & 0 & 0 & 0 & 0 \\
        \Atop[V]^{\tr} & 0 & 0 & 0 & 0 \\
    \end{pmatrix}
    \begin{pmatrix}
    \npot{} \\  \jcurrv[C] \\ \charge^{-1}(\charge[C]) \\  \flux^{-1}(\flux)\\  \jcurrv[V]
    \end{pmatrix} 
    \\
    & \label{eq:mna.cond_b}
    \qquad -
    \begin{pmatrix}
    \Atop[R]g(\Atop[R]^{\tr}\npot{})\\0\\0\\0\\0
    \end{pmatrix}+ \begin{pmatrix}
    -\Atop[I] & 0 \\ 0 & 0 \\ 0 & 0 \\ 0 & 0 \\ 0 & - I
    \end{pmatrix}
    \!\!\begin{pmatrix}
    \csrc({\tim}) \\ \vsrc({\tim})
    \end{pmatrix}\! ,
\end{align}
which is again completed by the output
\begin{align}
     & y= 
\begin{pmatrix}
    -\Atop[I] & 0  \\ 0 & 0 \\ 0 & 0 \\ 0 & - I
    \end{pmatrix}^{\tr}
    \begin{pmatrix}
    \npot{} \\  \flux^{-1}(\flux[L]) \\ \charge^{-1}(\charge[C]) \\  \jcurrv[V]
    \end{pmatrix} 
=
     \begin{pmatrix} -\Atop[I]^{\tr} e  \\ -\jcurrv[V] \end{pmatrix}.
 \end{align}
 \end{subequations}
Both models will be shown to fit into the port-Hamiltonian framework which will be presented in the forthcoming section. The first model contains less equations and unknowns, and shares the index analysis results with those for the charge/flux-oriented MNA equations from \cite{Estevez-Schwarz_2000aa} as shown in Section~\ref{sec.index}, whereas the second model is slightly higher structured than the first one.

\section{Port-Hamiltonian formulation of electric circuits}
\label{sec:phs.for.networks}
In this section, we introduce the class of nonlinear port-Hamiltonian DAE systems, for short {\em PH-DAE}, used in this paper. The following system class is a modification of a~class of port-Hamiltonian differential-algebraic equations introduced by \textsc{Mehrmann} and \textsc{Morandin} in \cite{MehM19ppt}. We will show that our circuit models \eqref{model1} and \eqref{model2} fit into this framework.
%
Furthermore, in the second part of this section, we look into multiply coupled PH-DAEs.

\subsection{Port-Hamiltonian for an overall system}
%
%
%
\begin{definition}[Port-Hamiltonian differential-algebraic equation (PH-DAE)]\label{def:PH-network-DAE}
%
%
A diffe\-rential-algebraic equation of the form
\begin{equation}
\label{eq:def.phdae}
\begin{aligned}
\ddt E x(t)
    &= J z(x(t))-r(z(x(t))) +B u(t),
    \\
y(t) & =  B^{\tr} z(x(t))
\end{aligned}
\end{equation}
is called a {\em port-Hamiltonian differential-algebraic equation (PH-DAE)}, if the following holds:
\begin{itemize}
    \item $E\in\real^{k\times n}$, $J\in\real^{n\times n}$ and $B\in\real^{n\times m}$,
    \item $z,r:\real^{n}\to \real^{k}$,
    \item There exists a~subspace $\mathcal{V}\subset\real^n$ with the following properties:
    \begin{itemize}
        \item[(i)] for all intervals $\interval\subset\real$ and functions $u:\interval\to\real^m$ such that \eqref{eq:def.phdae} has a~solution $x:\interval\to\real^n$, it holds $z(x(t))\in\mathcal{V}$ for all $t\in\interval$.
        \item[(ii)] $J$ is skew-symmetric on $\mathcal{V}$. That is, 
        \[\forall v,w\in\mathcal{V}:\;v^\top J w=-w^\top J v.\]
\item[(iii)] $r$ is accretive on $\mathcal{V}$. That is, 
        \[\forall v\in\mathcal{V}:\;v^\top r(v)\geq0.\]
    \end{itemize}
\item There exists some function $H\in C^1(\real^n,\real)$ such that 
\[ \forall x\in z^{-1}(\mathcal{V}):\; \nabla H(x)=E^{\tr}z(x).
\]
\end{itemize}
%
%
%
\end{definition}
Port-Hamiltonian systems an energy balance. In doing so, notice that the total energy of a~PH-DAE at time $t$ is given by $H(x(t))$, whereas the power inflow is realized by the inner product of input and output.
\begin{mylemma}[Energy balance]
The PH-DAE~\eqref{eq:def.phdae} system provides the usual energy balance
\begin{equation}
\ddt H(x(t))  \le  y(t)^\top u(t)\label{eq:energbal}
\end{equation}
of port-Hamiltonian systems.
\begin{proof}
 By using that for any solution $(x,u,y):\interval\to\real^n\times\real^m\times\real^m$ of \eqref{eq:def.phdae}, the following holds: First notice that,
for a~projector $P$ onto $\im E^{\tr}$, we have that $Px:\interval\to\real^n$ is differentiable. Further, by $\nabla H(x)=E^{\tr}z(x)$ for all $x\in z^{-1}(\mathcal{V})$, we have 
\[
\begin{aligned}
\ddt H(x(t))= & (\nabla H(x(t)))^{\tr}  \ddt P{x}(t)
   = z(x(t))^{\tr}E\ddt P{x}(t) 
   \\
    = &
        z(x(t))^{\tr}\ddt E P{x}(t)  
            = z(x(t))^{\tr}\ddt E{x}(t)
 \\
   = &
        \underbrace{z(x(t))^{\tr} J z(x(t))}_{=0} \, \underbrace{-z(x(t))^{\tr}r(z(x(t)))}_{\leq 0} \, +\underbrace{z(x(t))^{\tr}B u(t)}_{=(B^{\tr}z(x(t)))^{\tr}u(t)=y(t)^{\tr}u(t)}
\end{aligned}
\]
Integrating the above expression with respect to time gives for all $t_1\geq t_0$
\[
\begin{aligned}
    H(x(t_1))-H(x(t_0))=        
        & -\int_{t_0}^{t_1}z(x(t))^{\tr}r(z(x(t))) \diff t 
            + \int_{t_0}^{t_1}y(t)^{\tr}u(t) \diff t 
        \\
    \leq& \int_{t_0}^{t_1}y(t)^{\tr}u(t) \diff t.
\end{aligned}\]
This completes the proof.
\end{proof}
\end{mylemma}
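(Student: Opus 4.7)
The plan is to compute $\ddt H(x(t))$ along a solution, substitute the first equation of \eqref{eq:def.phdae}, and observe that the resulting expression splits naturally into three pieces: a vanishing skew-symmetric part, a non-positive dissipative part, and a term that matches the output-input pairing $y(t)^{\tr}u(t)$. The tools are all provided by the definition of a PH-DAE: the identity $\nabla H(x)=E^{\tr}z(x)$ on $z^{-1}(\mathcal{V})$, skew-symmetry of $J$ on $\mathcal{V}$, accretivity of $r$ on $\mathcal{V}$, and the assumption that $z(x(t))\in\mathcal{V}$ along any solution.

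The first technical issue is that a solution $x:\interval\to\real^n$ need not be differentiable, since the PH-DAE only requires $t\mapsto Ex(t)$ to be differentiable. To handle this I would introduce the orthogonal projector $P$ onto $\im E^{\tr}$, so that $EP=E$ and, since $\nabla H(x(t))=E^{\tr}z(x(t))\in\im E^{\tr}$, the chain rule for $H\circ x$ only sees the $P$-component of $x$. The map $t\mapsto Px(t)$ is differentiable because it is obtained from $Ex$ by applying the pseudoinverse of $E$. With this, one obtains $\ddt H(x(t))=z(x(t))^{\tr}E\ddt Px(t)=z(x(t))^{\tr}\ddt(EPx)(t)=z(x(t))^{\tr}\ddt(Ex)(t)$, reducing the energy rate to an expression involving only quantities known to be well-defined by the PH-DAE.

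Substituting the dynamics gives the three terms $z(x(t))^{\tr}Jz(x(t))$, $-z(x(t))^{\tr}r(z(x(t)))$, and $z(x(t))^{\tr}Bu(t)$. Using $z(x(t))\in\mathcal{V}$, skew-symmetry of $J$ on $\mathcal{V}$ forces the first term to vanish, accretivity of $r$ on $\mathcal{V}$ makes the second term non-positive, and the third term equals $(B^{\tr}z(x(t)))^{\tr}u(t)=y(t)^{\tr}u(t)$ by the output equation. Combining these yields the claimed inequality pointwise in $t$.

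The main obstacle I anticipate is the chain-rule step: it has to be made precise that $\ddt H(x(t))$ can be computed even though $x$ itself is only guaranteed to be piecewise differentiable modulo $\ker E$. The resolution via the projector $P$ onto $\im E^{\tr}$ is the crucial trick, and it hinges exactly on the compatibility condition $\nabla H(x)\in\im E^{\tr}$ built into the definition of a PH-DAE. Once this is settled, the remainder is a direct application of the three structural properties of $\mathcal{V}$, $J$, and $r$.
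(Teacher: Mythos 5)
Your proposal matches the paper's proof essentially step for step: the projector $P$ onto $\im E^{\tr}$ to handle non-differentiability of $x$, the chain-rule reduction $\ddt H(x(t))=z(x(t))^{\tr}\ddt Ex(t)$ via $\nabla H(x)=E^{\tr}z(x)$, and the three-term split using skew-symmetry of $J$ on $\mathcal{V}$, accretivity of $r$ on $\mathcal{V}$, and the output equation. Your added remark that $Px$ is differentiable because it is recovered from $Ex$ by a pseudoinverse is a small extra justification the paper leaves implicit, but the argument is the same.
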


\begin{remark}\
\begin{itemize}
 \item[(a)] The function $r$ is responsible for energy dissipation. If $r=0$, then the energy balance \eqref{eq:energbal} becomes an equation. In particular, the energy of the system is conserved, if $r=0$ and $u=0$.
 \item[(b)] 
Our definition of a port-Hamiltonian differential-algebraic equation differs from the one by \textsc{Mehrmann} and \textsc{Morandin} in \cite{MehM19ppt}, which is more general in the sense that time-varying port-Hamiltonian differential-algebraic systems are considered, and the matrices $E$ and $J$ may depend on the state $x$. However, the definition of a differential-algebraic port-Hamiltonian system in \cite{MehM19ppt} does not involve a (possibly proper) subspace $\mathcal{V}\subset\real^n$ on which $z(x(\cdot))$ evolves and the function $r$ is assumed to be linear in $z$. 
 We note, that Definition \ref{eq:def.phdae} can also be extended to the time-varying situation, and to the case of $z$ dependent matrices $E$ and $J$.
 \item[(c)] 
 The space $\mathcal{V}\subset\real^n$ may be proper because of linear (hidden) algebraic constraints. For instance, if for some matrix $K\in\real^{k\times n}$ holds $KE=0$, $KB=0$ and $Kr(z)=0$ for all $z\in\real^n$, then a~multiplication of \eqref{eq:def.phdae} from the left with $K$ leads to 
\[KJz(x(t))=0.\]
This means that the solutions of \eqref{eq:def.phdae} fulfill $z(x(t))\in\ker KJ$ for all $t\in\interval$. \myendofenv
\end{itemize}
\end{remark}

\subsection{Electric Networks---A PH-DAE description}
We show, that the above models~\eqref{model1} and~\eqref{model2} of the electric circuit equations, which are based on the charge/flux-oriented MNA circuit equations, match with the PH-DAE definition.

\begin{proposition}\label{prop:hamiltonian-network}
Let Assumption~\ref{ass:soundness-passivity} hold. Moreover, let $V_C$ and $V_L$ be defined as in Proposition~\ref{prop:energy}. Then the following holds:
\begin{enumerate}
\item[(a)] The model \eqref{model1} is a PH-DAE with
\[ 
\begin{aligned}
 u(t)&= \begin{pmatrix}
    \csrc({\tim}) \\ \vsrc({\tim})
    \end{pmatrix},\quad y(t)=
     \begin{pmatrix} -\Atop[I]^{\tr} e(t)  \\ -\jcurrv[V](t) \end{pmatrix},\quad
     x(t)=
   \begin{pmatrix}
       \charge[C](t) \\ \flux[L](t) \\ \npot{}(t) \\  \jcurrv[V](t)
    \end{pmatrix},\\
z(x) &= 
    \begin{pmatrix}
        \npot{} \\  \jcurrv[L] \\ u_C \\  \jcurrv[V]
    \end{pmatrix}
        = \begin{pmatrix}
    \npot{} \\  \flux^{-1}(\flux[L]) \\ \charge^{-1}(\charge[C]) \\   \jcurrv[V]
    \end{pmatrix},\quad  
r\left(\begin{pmatrix}
        \npot{} \\  \jcurrv[L] \\ u_C \\  \jcurrv[V]
    \end{pmatrix}\right)=\begin{pmatrix}
    \Atop[R]g(\Atop[R]^{\tr}\npot{})\\0\\ 
        \Atop[C]^\tr \npot - u_{C} \\0
    \end{pmatrix},\\ 
    E&=\begin{pmatrix}
     \Atop[C] & 0 & 0 & 0 \\
        0 & I & 0 & 0 \\
        0 & 0 & 0 & 0 \\
        0 & 0 & 0 & 0 \\
    \end{pmatrix},\quad J=
    \begin{pmatrix}
         0 & -\Atop[L] & 0 & -\Atop[V] \\
         \Atop[L]^{\tr} & 0 & 0 & 0 \\
         0 & 0 & 0 & 0 \\
        \Atop[V]^{\tr} & 0 & 0 & 0 \\
    \end{pmatrix}\quad B=\begin{pmatrix}
    -\Atop[I] & 0 \\ 0 & 0 \\ 0 & 0 \\ 0 & - I
    \end{pmatrix},
\end{aligned}\]
subspace 
\[\mathcal{V}=\left\{\left.\begin{pmatrix}
        \npot{} \\  \jcurrv[L] \\ u_C \\  \jcurrv[V]
    \end{pmatrix}\in\real^n\right|\Atop[C]^{\tr}e=u_C\right\}.\]
and Hamiltonian
\[
H(x)= V_C(\charge[C]) + V_L(\flux[L]).
\]
\item[(b)] The model \eqref{model2} is a PH-DAE with $u(t)$, $y(t)$ as in (a), and
\[ 
\begin{aligned}
x(t)&=
   \begin{pmatrix}
       \npot{}(t) \\ \jcurrv[C](t) \\ \charge[C](t) \\ \flux[L](t) \\  \jcurrv[V](t)
    \end{pmatrix},\;
z(x) = 
    \begin{pmatrix}
        \npot{} \\  \jcurrv[C] \\ u_C \\\jcurrv[L] \\  \jcurrv[V]
    \end{pmatrix}
        = \begin{pmatrix}
    \npot{} \\  \jcurrv[C] \\ \charge^{-1}(\charge[C]) \\ \flux^{-1}(\flux[L]) \\   \jcurrv[V]
    \end{pmatrix},\\
  r\left(\begin{pmatrix}
        \npot{} \\  \jcurrv[C] \\ u_C \\\jcurrv[L] \\  \jcurrv[V]
    \end{pmatrix}\right)&=\begin{pmatrix}
    \Atop[R]g(\Atop[R]^{\tr}\npot{})\\0\\ 
        0 \\0 \\0
    \end{pmatrix},\quad 
E=\begin{pmatrix}
     0 & 0 & 0 & 0& 0 \\
     0 & 0 & 0 & 0& 0 \\
     0 & 0 & I & 0 & 0 \\
     0 & 0 & 0 & I & 0 \\
     0 & 0 & 0 & 0 & 0 \\
    \end{pmatrix},\\
 J&=\begin{pmatrix}
         0 &  -\Atop[C]&0  &-\Atop[L]   & -\Atop[V] \\
        \Atop[C]^{\tr} & 0 & -I & 0 & 0 \\
        0 & I & 0 & 0 & 0 \\
        \Atop[L]^{\tr} & 0 & 0 & 0 & 0 \\
        \Atop[V]^{\tr} & 0 & 0 & 0 & 0 \\
    \end{pmatrix},\; B=\begin{pmatrix}
    -\Atop[I] & 0 \\ 0 & 0 \\ 0 & 0 \\ 0 & 0 \\ 0 & - I
    \end{pmatrix},
\end{aligned}\]
and, for $n_v$ being the number on non-grounded vertices, subspace 
\[\mathcal{V}=\real^{n_v}\times\real^{n_C}\times\real^{n_C}\times\real^{n_L}\times\real^{n_V},
\]
and Hamiltonian
\[
H(x)= V_C(\charge[C]) + V_L(\flux[L]).
\]
\end{enumerate}
\end{proposition}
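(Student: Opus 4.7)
The plan is to verify, for each of the two models (a) and (b), each of the defining ingredients of Definition~\ref{def:PH-network-DAE} in turn: first, the pure bookkeeping identification of the coefficient data $(E,J,B)$ and the nonlinear maps $z,r$ with what is visible in \eqref{eq:mna.cond_a} and \eqref{eq:mna.cond_b}; second, the three subspace conditions (i)--(iii) on $\mathcal{V}$; and third, the existence of a Hamiltonian $H$ with $\nabla H(x)=E^{\tr}z(x)$ on $z^{-1}(\mathcal{V})$.

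For the identification step and for condition~(ii), I would place the proposed blocks side-by-side with the right-hand sides of the two systems and match them row by row; no computation is needed. Skew-symmetry of $J$ on $\mathcal{V}$ then holds trivially, because in both models $J$ is already skew-symmetric as a matrix: its off-diagonal blocks come in pairs $\Atop[L]/-\Atop[L]^{\tr}$, $\Atop[V]/-\Atop[V]^{\tr}$, $\Atop[C]/-\Atop[C]^{\tr}$ and $I/-I$, so (ii) holds on every subspace.

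For the invariance condition~(i), model (b) is trivial because $\mathcal{V}$ is the whole ambient space. In model (a) the key observation is that the third block-row of \eqref{eq:mna.cond_a} has zero entries from $E$, $J$ and $B$, so the DAE itself forces $\Atop[C]^{\tr}\npot-\charge^{-1}(\charge[C])=0$ along any solution, which is precisely the defining equation of $\mathcal{V}$. For accretivity~(iii), I would compute $v^{\tr}r(v)$ with $v=z(x)$: the leading contribution in both models is $(\Atop[R]^{\tr}\npot)^{\tr}g(\Atop[R]^{\tr}\npot)\geq 0$ by monotonicity of $g$ from Assumption~\ref{ass:soundness-passivity}b)(ii) together with $g(0)=0$, while the only extra term in model (a), namely $u_C\bigl(\Atop[C]^{\tr}\npot-u_C\bigr)$, vanishes on $\mathcal{V}$ by construction.

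Finally, for the Hamiltonian identity I would set $H(x):=V_C(\charge[C])+V_L(\flux[L])$ and invoke Proposition~\ref{prop:energy}(b) to obtain $\nabla V_C=\charge^{-1}$ and $\nabla V_L=\flux^{-1}$; a blockwise expansion of $E^{\tr}z(x)$ then recovers $\nabla H(x)$ on all of $\real^n$ in model (b) and on $z^{-1}(\mathcal{V})$ in model (a), where in the latter case the $\Atop[C]^{\tr}\npot$-block matches $\charge^{-1}(\charge[C])$ precisely because of the constraint defining $\mathcal{V}$. The only conceptual subtlety---and the closest thing to a real obstacle---is to recognize that in model (a) the third component of $r$ is not a physical dissipation term but rather a device which encodes a hidden algebraic constraint through the accretivity mechanism; once this is seen, conditions (i), (iii) and the Hamiltonian identity all close up on $\mathcal{V}$ simultaneously.
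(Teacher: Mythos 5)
Your proposal follows essentially the same route as the paper's proof: invariance of $\mathcal{V}$ read off from the purely algebraic third block row in model (a) (trivially in (b)), skew-symmetry of $J$ as a plain matrix, accretivity of $r$ reduced to that of $g$ with the extra term $u_C^{\tr}(\Atop[C]^{\tr}\npot-u_C)$ vanishing on $\mathcal{V}$, and the Hamiltonian identity via Proposition~\ref{prop:energy} using the constraint $u_C=\Atop[C]^{\tr}\npot$ in case (a). The only point worth noting is that your accretivity step explicitly invokes $g(0)=0$, which is not part of Assumption~\ref{ass:soundness-passivity}; the paper's own proof makes the same tacit assumption when it asserts accretivity of $g$ from positive definiteness of the symmetrized Jacobian, so this is a shared (and physically harmless) gloss rather than a defect of your argument.
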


\begin{proof}
\begin{enumerate}
\item[(a)] Since \eqref{model1} contains the equation $\Atop[C]^{\tr}e(t)-u_C(t)=0$, we see that any solution fulfills $z(x(t))\in\mathcal{V}$ pointwise. The skew-symmetry of $J$ is obvious. Further, by the assumption that the Jacobian of $g$ has positive definite real part, we obtain that $g$ is accretive. This directly implies that $r$ is accretive on $\mathcal{V}$. Moreover, by using Proposition~\ref{prop:energy}, we compute
\[\begin{aligned}
\nabla H(x)=&\begin{pmatrix}\nabla V(\charge[C])\\\nabla V(\flux[L])\\0\\0\end{pmatrix}
\stackrel{\text{Prop.~\ref{prop:energy}}}{=}\begin{pmatrix}\charge^{-1}(\charge[C])\\
\flux^{-1}(\flux[L])\\0\\0
\end{pmatrix}
=\begin{pmatrix}u_C\\
\jcurrv[L]\\0\\0
\end{pmatrix}\\
\stackrel{z\in\mathcal{V}}{=}&\begin{pmatrix}\Atop[C]^{\tr}e\\
\jcurrv[L]^{\tr}\\0\\0
\end{pmatrix}
= \begin{pmatrix}\Atop[C] & 0 & 0 & 0 \\
        0 & I & 0 & 0 \\
        0 & 0 & 0 & 0 \\
        0 & 0 & 0 & 0
    \end{pmatrix}^{\tr}\begin{pmatrix}
        \npot{} \\  \jcurrv[L] \\ u_C \\  \jcurrv[V]
    \end{pmatrix}=E^{\tr}z(x).
\end{aligned}\]
\item[(b)] The space $\mathcal{V}=\real^{n_v}\times\real^{n_C}\times\real^{n_C}\times\real^{n_L}\times\real^{n_V}$ trivially has the property that all solutions evolve in $\mathcal{V}$. Moreover, $J$ is skew-symmetric, and the accretivity of $r$ follows from the accretivity of $g$, where the latter can be concluded by the argumentation as in (a). For the gradient of the Hamiltonian, we compute
\[\begin{aligned}
\nabla H(x)=&\begin{pmatrix}0\\0\\\nabla V(\charge[C])\\\nabla V(\flux[L])\\0\end{pmatrix}
\stackrel{\text{Prop.~\ref{prop:energy}}}{=}\begin{pmatrix}0\\0\\\charge^{-1}(\charge[C])\\
\flux^{-1}(\flux[L])\\0
\end{pmatrix}
=\begin{pmatrix}0\\0\\u_C\\
\jcurrv[L]\\0
\end{pmatrix}\\
=& \begin{pmatrix}0 & 0 & 0 & 0 & 0 \\
0 & 0 & 0 & 0 & 0 \\
0 & 0 & I & 0 & 0 \\
0 & 0 & 0 & I & 0 \\
0 & 0 & 0 & 0 & 0 
    \end{pmatrix}^{\tr}\begin{pmatrix}
        \npot{} \\  \jcurrv[C] \\ u_C \\  \jcurrv[V] \\ \jcurrv[V]
    \end{pmatrix}=E^{\tr}z(x)
\end{aligned},\] 
which concludes the proof.\qedhere
\end{enumerate}
\end{proof}

\subsection{Port-Hamiltonian system formulation  for multiple  subsystems}
%
In the following, we generalize the above monolithic setting of Definition \ref{def:PH-network-DAE} to the case of $\numsys\ge 2$ subsystems. 
%
To couple several PH-DAEs,
we first setup some notation, to address different types of input and output: internal and coupling quantities.

\begin{definition}[Multiply coupled PH-DAE]\label{def:multi-PH-network-DAE}
We consider $\numsys$ copies of PH-DAEs~\eqref{eq:def.phdae} 
\begin{equation}\label{phdae.coupled.single.network}
\begin{aligned}
\ddt E_i  x_i(t) 
    =& J_i  z_i(x_i(t)) -r_i\bigl(z_i (x_i(t))\bigr) + B_i u_i (t)
    \\
y_i(t) = &   B_i^\tr z_i \bigl(x_i(t) \bigr)
 \end{aligned}
\end{equation}
with associated Hamiltonian $H_i$
($i=1,\ldots,\numsys$). 
We call these $k$ copies of PH-DAEs a \emph{multiply coupled PH-DAE} if the follwing are satisfied: The input $u_i$ and the output $y_i$ are split into
\begin{equation}
    \label{eq:split-input-output}
    u_i(t)= \begin{pmatrix}
                \hat u_i(t) \\
                \bar u_i(t)
        \end{pmatrix},
        \quad
    y_i(t)= \begin{pmatrix}
                \hat y_i(t) \\
                \bar y_i(t)
        \end{pmatrix},
\end{equation}
where the bar-accent refers to external inputs and outputs, i.e., quantities, which are not communicated to other subsystems, and the hat-accented quantities refer to input and output data used for coupling of the $\numsys$ subsystems.
Moreover, the port matrix is split accordingly:
\begin{equation}\label{eq:split-port-feed-matrices}
B_i = \begin{pmatrix}
                \hat B_i &
                \bar B_i
            \end{pmatrix} .
\end{equation}
The subsystems are coupled via 
topological coupling matrices
$\hat{C}_{i,j} \in \{-1,\, 0,\, 1 \}^{m_i\times m_i}$ 
\[
\hat{u}_i + \sum_{j=1, j \neq i}^k \hat C_{i,j} \hat{y}_j =0 \qquad (\text{for }\; i=1,\dotsc,\numsys), \quad  \color{black}
\hat{C}=
    \begin{pmatrix}
       0             &  \hat{C}_{1,2} & \dots & \hat{C}_{1,\numsys} \\
       \hat{C}_{2,1} &  \ddots        &  \ddots & \vdots \\
       \vdots        &\ddots          &  \ddots & \hat{C}_{\numsys-1,\numsys}\\
        \hat{C}_{\numsys,1} & \dots & \hat{C}_{\numsys,\numsys-1} & 0
\end{pmatrix}
\]
\color{black}
with $\hat{C}$ skew symmetric.
\end{definition}
\color{black}
%

Now, we can deduce for the overall system described in Definition \ref{def:multi-PH-network-DAE}:

\begin{corollary}[Multiply skew-symmetric coupling structure preserving interconnection]
\label{cor.structure.preserving.interconnection.network}
We consider a multiply coupled PH-DAE  with $\numsys$ subsystems. 
The overall system is obtained by aggregation of vector quantities and matrices:
\begin{alignat*}{2}
& v^{\tr} =  (v_1^{\tr},\ldots, v_\numsys^{\tr})
    \quad  &&\text{ for } v \in \{x,\, u,\, \hat u, \,\bar u, \, y, \, \hat y, \, \bar y\},
        \\
 & F=\diag\,(F_1,\ldots, F_\numsys)
    \quad &&\text{ for } F \in \{E,\,J,\, \hat{B},\, \bar{B}\},
\end{alignat*}
\begin{equation*}
    r^{\tr}(z(x)) = \left(r_1 \bigl(z_1(x_1)\bigr)^{\!\tr},\, \dotsc,\,
    r_{\numsys} \bigl(z_\numsys(x_\numsys)\bigr)^{\tr}  \right), 
    \;\quad
    z^{\tr}(x) = \left((z_1(x_1)^{\!\tr},\, \dotsc,\,
                     z_\numsys(x_\numsys)^{\tr}  \right), 
\end{equation*}
%
and it reads (with coupling equation $\hat{u}+\hat{C}\hat{y}=0$
 in the third block equation)
%
\begin{equation}
\label{eq:phdae.coupled.joint.network}
\begin{aligned}
\ddt 
\begin{pmatrix}
E & 0 & 0 \\
0 & 0 & 0 \\
0 & 0 & 0
\end{pmatrix}\!\!
\begin{pmatrix}
 x \\ {\hat u} \\ { \hat y}
\end{pmatrix}
 = &
\begin{pmatrix}
J & \hat{B} & 0 \\
- \hat{B}^{\tr} & 0 & I \\
0 & -I &  -\hat{C}
\end{pmatrix}\!\!
\begin{pmatrix}
 z \\  \hat u \\  \hat y
\end{pmatrix}
\!-\!
\begin{pmatrix}
r \\ 0 \\ 0
\end{pmatrix}
+ \begin{pmatrix}
\bar B  \\ 0 \\ 0
\end{pmatrix}
\bar{u},
\\
\bar y  = &
\begin{pmatrix}
\bar{B}^{\tr} &0 &0
\end{pmatrix}
\begin{pmatrix}
z \\ \hat{u} \\ \hat{y}
\end{pmatrix} \!. 
\end{aligned}
\end{equation}
\color{black}
Then this system is a PH-DAE with Hamiltonian $H=H_1 + \dotsc + H_{\numsys}$.
\end{corollary}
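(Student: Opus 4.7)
The plan is to verify that the aggregated system \eqref{eq:phdae.coupled.joint.network} satisfies every requirement of Definition \ref{def:PH-network-DAE} by exhibiting an appropriate enlarged state, effort mapping, dissipation, subspace and Hamiltonian. Denote the overall state by $\tilde x=(x^{\tr},\hat u^{\tr},\hat y^{\tr})^{\tr}$, and read off the block data
\[
\tilde E=\begin{pmatrix}E&0&0\\0&0&0\\0&0&0\end{pmatrix},\;
\tilde J=\begin{pmatrix}J&\hat B&0\\-\hat B^{\tr}&0&I\\0&-I&-\hat C\end{pmatrix},\;
\tilde B=\begin{pmatrix}\bar B\\0\\0\end{pmatrix},
\]
together with $\tilde z(\tilde x)=(z(x)^{\tr},\hat u^{\tr},\hat y^{\tr})^{\tr}$ and $\tilde r(\tilde z(\tilde x))=(r(z(x))^{\tr},0,0)^{\tr}$. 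For the subspace, choose $\tilde{\mathcal V}=\mathcal V_1\times\cdots\times\mathcal V_\numsys\times\real^{\hat m}\times\real^{\hat m}$, where $\hat m=\sum_i m_i$ and each $\mathcal V_i$ is the subspace belonging to the $i$-th subsystem. The Hamiltonian is simply $\tilde H(\tilde x)=H_1(x_1)+\dots+H_\numsys(x_\numsys)$.

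Next I would check block by block that \eqref{eq:phdae.coupled.joint.network} encodes the $\numsys$ subsystems together with the coupling law. The first block row reproduces \eqref{phdae.coupled.single.network} after splitting $u_i=(\hat u_i,\bar u_i)$ and $B_i=(\hat B_i,\bar B_i)$; the second block row yields the output equation $\hat y=\hat B^{\tr}z$ of \eqref{phdae.coupled.single.network}; the third block row gives precisely the coupling constraint $\hat u+\hat C\hat y=0$. The output equation $\bar y=\bar B^{\tr}z$ is read off the last line of \eqref{eq:phdae.coupled.joint.network}.

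The main structural verification is the skew-symmetry of $\tilde J$ on $\tilde{\mathcal V}$. For the top-left block this follows from the skew-symmetry of each $J_i$ on $\mathcal V_i$ (which in turn yields $J=\diag(J_1,\ldots,J_\numsys)$ skew-symmetric on $\mathcal V$). The off-diagonal pairs $(\hat B,-\hat B^{\tr})$ and $(I,-I)$ are skew-symmetric by construction, and the bottom-right block $-\hat C$ is skew-symmetric because $\hat C$ was assumed skew-symmetric in Definition \ref{def:multi-PH-network-DAE}. Accretivity of $\tilde r$ on $\tilde{\mathcal V}$ reduces to
\[
\tilde z^{\tr}\tilde r(\tilde z)=\sum_{i=1}^{\numsys} z_i^{\tr}r_i(z_i)\geq 0\quad\text{for }z_i\in\mathcal V_i,
\]
which is just the accretivity of each subsystem's dissipation. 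The invariance $\tilde z(\tilde x(t))\in\tilde{\mathcal V}$ is inherited componentwise from the subsystems together with the fact that the $\hat u,\hat y$ components of $\tilde z$ live in their entire ambient space.

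Finally, for the Hamiltonian condition, differentiation gives
\[
\nabla\tilde H(\tilde x)=\bigl(\nabla H_1(x_1)^{\tr},\ldots,\nabla H_\numsys(x_\numsys)^{\tr},0,0\bigr)^{\tr}=\bigl(z(x)^{\tr}E^{\tr},0,0\bigr)^{\tr}=\tilde E^{\tr}\tilde z(\tilde x)
\]
on $\tilde z^{-1}(\tilde{\mathcal V})$, by applying the gradient identity $\nabla H_i(x_i)=E_i^{\tr}z_i(x_i)$ of each subsystem and exploiting the block-diagonal shape of $\tilde E$. I expect the only mildly delicate point to be the bookkeeping around the skew-symmetry of the coupling block $-\hat C$: everything else is a straightforward block-matrix assembly, so the proof is essentially a structured verification rather than a computation with genuine obstacles.
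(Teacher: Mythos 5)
Your proof is correct and takes essentially the same route as the paper: you assemble the subsystem dynamics, their hat-output equations and the coupling constraint $\hat u+\hat C\hat y=0$ into the three block rows and let the PH-DAE axioms (skew-symmetry of the extended structure matrix on the product subspace, accretivity of the stacked dissipation, and the gradient identity for $H=H_1+\dots+H_{\numsys}$) be inherited from the subsystems, while the paper merely interposes the dummy output $\hat d_i=\hat u_i$ per subsystem before aggregating — a cosmetic difference, and your explicit verification of Definition~\ref{def:PH-network-DAE} is if anything more complete. (Only a typo: in the gradient computation the row form should be $z(x)^{\tr}E$ rather than $z(x)^{\tr}E^{\tr}$; the concluding identity $\nabla\tilde H=\tilde E^{\tr}\tilde z$ is the correct one.)
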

%
\begin{proof}
In order to simply superpose the subsystems,
we rewrite the $i$th subsystem~\eqref{phdae.coupled.single.network} in a matrix format. 
To this end, we use split input and output: both comprise coupling terms and external terms. 
Thereby, the coupling terms will belong to the internal description of the overall systems. Only external input/output will form the input/output of the overall systems. Subsystem~\eqref{phdae.coupled.single.network} can be equivalently written as 
\begin{equation}\label{eq:ext-ph-DAE-representation}
\begin{aligned}
    \ddt\begin{pmatrix}
        E_i & 0  \\
        0 & 0
    \end{pmatrix}\!\!
\begin{pmatrix}
x_i \\ {\hat u}_i
\end{pmatrix}
= &
\begin{pmatrix}
J_i & \hat{B}_i  \\
-\hat{B}_i^{\tr} &  0
\end{pmatrix}\!\!
\begin{pmatrix}
 z_i(x_i) \\  \hat u_i
\end{pmatrix}
-
\begin{pmatrix}
r_i\left( z_i(x_i) \right) \\ 0
\end{pmatrix}
+ 
\begin{pmatrix}
\bar{B}_i   & 0 \\0 & I %
\end{pmatrix}\!\!
\begin{pmatrix}
\bar u_i \\ \hat y_i
\end{pmatrix}\!\!
\\
\begin{pmatrix}
\bar y_i \\ \hat{d}_i
\end{pmatrix} \!\!= &
\begin{pmatrix}
\bar B_i^{\tr}\! &0 \\ 0 & I %
\end{pmatrix}\!\!
\begin{pmatrix}
z_i(x_i) \\ \hat{u}_i
\end{pmatrix} \!,
\end{aligned}
\end{equation}
\color{black}
where we use the additional dummy output $\hat{d}_i=\hat{u}_i$.
%
Then, the extended system~\eqref{eq:ext-ph-DAE-representation} is again a PH-DAE,
with corresponding extended matrices: 
\begin{gather*}
\tilde E_i := \begin{pmatrix}
E_i & 0\\
0 &  0
\end{pmatrix},\quad
\tilde J_i := \begin{pmatrix}
J_i & \hat{B}_i \\
-\hat{B}_i^{\tr} & 0
\end{pmatrix}, \quad
\tilde B_i:= \begin{pmatrix}
\bar B_i & 0 \\ 0 & I 
\end{pmatrix}.
\end{gather*}
Now, we discuss every block of equations in the joint system~\eqref{eq:phdae.coupled.joint.network}. First, the aggregation
$F=\diag\,(F_1,\ldots, F_\numsys)$ for $F \in \{E,\,J,\, \hat{B},\, \bar{B}\}$
\color{black}
of
(\ref{eq:ext-ph-DAE-representation}.1) yields directly (\ref{eq:phdae.coupled.joint.network}.1) padded with zeros for the variable $\hat{y}$. 
For the second block of equations, we have to perform aggregation and have to move $\hat{y}$ from the output position to internal variables.
Thereby the vector $(x^\tr, \hat{u}^\tr)$ and $(z^\tr, \hat{u}^\tr)$ are extended. 
Then,
the aggregated structure preserving interconnection $\hat{u}=- \hat{C} \hat{y}$ gives the third block. Finally, the output equation of \eqref{eq:ext-ph-DAE-representation} yields the output equation by aggregation, dropping the dummy part and 
adding a padding of zeros. The properties of the terms are inherited from the respective definition of the subsystems. 
%
\end{proof}

\begin{remark}
This transfers the result from~\cite{MehM19ppt} to  circuits with non-linearities. 
Furthermore, no additional variables are introduced. Moreover the structure matrix of the overall 
system~\eqref{eq:phdae.coupled.joint.network} is identified as 
\begin{equation}
    J^{\tot}:= \begin{pmatrix}
            J & \hat{B} & 0 \\
            -\hat{B}^{\tr} & 0 & I \\
            0 & -I &  -\hat{C}
        \end{pmatrix}\! . \tag*{$\myendofenv$}
\end{equation}
\end{remark}
\begin{remark}
\label{rem.change.ham.coupled.network}
\begin{enumerate}[label=\roman*)]
\item System ~\eqref{eq:phdae.coupled.joint.network} can be condensed to a PH-DAE 
(by removing internal input $\hat{u}$ and output $\hat{y}$)
\begin{subequations}
\label{eq:phdae.coupled.joint.network.condensed}
\begin{align}
\ddt E  x& =  \hat J z  - r +  \bar B \bar u, \\
\bar y & =  \bar B^{\tr} z
\end{align}
\end{subequations}
with the skew-symmetric matrix $\hat J$ given by  $\hat J = J - \hat B \hat C \hat B^{\tr}$. This follows directly from
$J z + \hat B \hat u = J z - \hat B \hat C \hat y = (J- \hat B \hat C \hat B^{\tr}) z$.
Thereby the PH-DAE structure is kept.
%

\item Note that the change in the Hamiltonian $H$ of~\eqref{eq:phdae.coupled.joint.network}, as well as in its condensed
version~\eqref{eq:phdae.coupled.joint.network.condensed}, from time $t$ to $t+h$ is given by
\begin{align}
\label{change.hamiltonian.condensed}
& \int_t^{t+h} 
- z(x(\tau))^{\tr} r(z(x(\tau))) +
\bar u(\tau)^{\tr} \bar y(\tau)
\, \diff \tau  \\
& 
\qquad =
\int_t^{t+h} 
- z(x(\tau))^{\tr} r(z(x(\tau))) +
\color{black}
\bar u(\tau)^{\tr}
\bar B^{\tr}
z(x(\tau))\color{black} \, \diff \tau. 
\tag*{$\myendofenv$}
\end{align}
%
\end{enumerate}
\end{remark}

\subsection{Electric circuits with multiple subsystems---A PH-DAE 
description}
\label{sec3.4}

Large integrated circuits are usually designed in blocks which may comprise even different  functional units. Then, these subcircuits are put together in an overall system by connecting respective terminals. In this way, a substructure may be already given by the circuit design, see e.g.~Figure \ref{fig.macro} (left) 
with respective inputs $\bar{u}$ and outputs $\bar{y}$. 
To form separate models of the subcircuits, one can artificially double the vertices of the subsystems' terminals by inserting a voltage source which provides a voltage drop of zero (artificial voltage source). 
This amounts to further inputs and outputs for the subsystems, which state the coupling  $\hat{u}$ and $\hat{y}$, 
see Figure \ref{fig.macro} (right).

Let the overall circuit (with given Assumption \ref{ass:soundness-passivity}), consist of subcircuits $i=1,\ldots,\numsys$. We use the index $i$ to identify the quantities of the $i$th subcircuit, e.g.~we use $\npot[i](t)\in\real^{n_{u_i}}$ for the vertex voltages and so on.
Moreover, we assume that we have $n_{\lambda}$ coupling edges linking the $\numsys$ subcircuits in the overall setting. Then we have  associated edge
currents $\lambdav(t) \in \real^{n_{\lambda}}$ and  $n_{\lambda}$ artificial voltage source. Now, let the $i$th subsystem have the respective incidence matrix $\Atop[\lambda_{\mathnormal i}] \in \{ -1,\, 0,\, 1 \}^{n_{u_i}\times n_{\lambda}}$ for the artificial voltage sources. Thus the coupling amounts to (i) an additional term in the KCL ($i$th circuit), for the coupling edge/current: $\Atop[\lambda_{\mathnormal i}] \lambdav$. In fact, one can model this by adding this contribution to the current source term ($\Atop[I]$):
\[
   \Atop[I_{\mathnormal i}] \rightsquigarrow \left( \Atop[I_{\mathnormal i}],\, \Atop[\lambda_{\mathnormal i}] \right),
        \quad \csrc[i] \rightsquigarrow \begin{pmatrix} \csrc[i] \\ \lambdav \end{pmatrix} .
\]
Due to the virtuality of the coupling voltage sources, one has (ii) to guarantee that the vertex potentials at the boundaries coincide, as done in \eqref{CouplingCondCircuits}, see below.

\begin{figure}
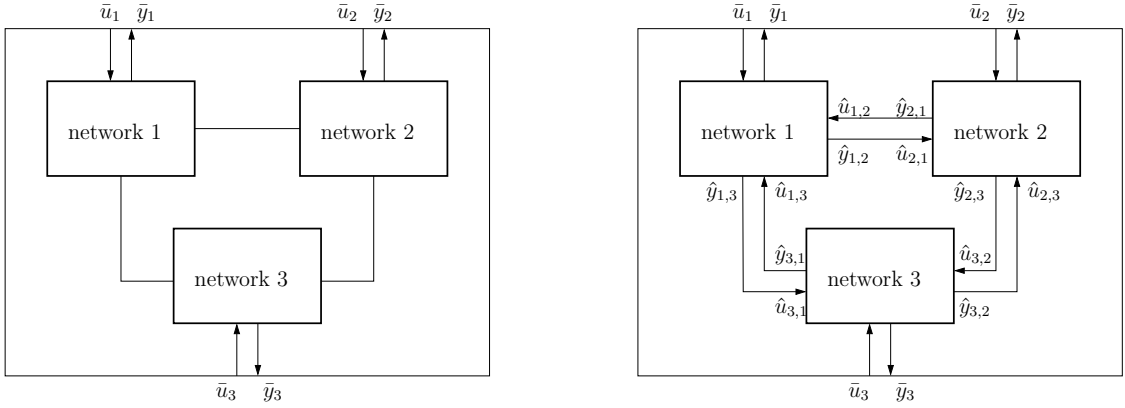

        \centering
    \scalebox{0.7}{\input{monolithic-view-coupled.pdf_t}} \hfill
    \scalebox{0.7}{\input{subsystems-view-coupled.pdf_t}}

        \caption{\label{fig.macro}
         Input/output for distributed circuits: monolithic view (left), coupled circuits view (right).}
\end{figure}

In the end, the circuit equations for the $\numsys$ coupled circuit DAEs are comprised by the subsystems $i=1,\dotsc,\numsys$:
\begin{subequations}\label{CoupledCircuits}
\begin{align}\label{SubSystemCircuitsKCL}
%
 \vect{0} & =
 \Atop[C_{\mathnormal i}] \ddt \charge[C_{\mathnormal i}](\Atop[C_{\mathnormal i}]^{\tr} \npot[i])
    + \Atop[R_{\mathnormal i}]  \resi_i ( \Atop[R_{\mathnormal i}]^{\tr} \npot[i])
    + \Atop[L_{\mathnormal i}] \jcurrv[L_{\mathnormal i}]
    + \Atop[V_{\mathnormal i}] \jcurrv[V_{\mathnormal i}] 
%
    + \Atop[I_{\mathnormal i}] \csrc[i] (t)
    + \Atop[\lambda_{\mathnormal i}] \lambdav
 \\
 											\label{SubSystemCircuitsMitte}
 \vect{0} & =  \ddt \flux[L_{\mathnormal i}](\jcurrv[L_{\mathnormal i}])
    - \Atop[L_{\mathnormal i}]^{\tr} \npot[i]
 \\
 \vect{0} & =  \Atop[V_{\mathnormal i}]^{\tr} \npot[i] - \vsrc[i](t)
 %
  \label{SubSystemCircuitsIdFlux}
\end{align}
together with a set of linear coupling equations
\begin{equation}\label{CouplingCondCircuits}
  \vect{0} = \sum_{i=1}^\numsys \Atop[\lambda_{\mathnormal i}]^{\tr} \,\npot[i] \, .
\end{equation}
\end{subequations}

These coupled DAE circuit equations can be written as $\numsys$ multiply coupled PH-DAE system according to Definition~\ref{def:multi-PH-network-DAE}.
The only ambiguity is the handling of the coupling condition~\eqref{CouplingCondCircuits}. The simplest approach is to add the coupling condition to one of the subsystem, without loss of generality to the last one. It holds

\begin{mylemma}[PH-DAE formulation of mutually coupled DAEs]
\label{lem:PH-DAE_mutally_coupled_networks}
The coupled circuit DAEs~\eqref{CoupledCircuits} define $\numsys$ multiply coupled PH-DAE systems according to Definition~\ref{def:multi-PH-network-DAE}.
\end{mylemma}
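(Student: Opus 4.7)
The plan is to cast each subcircuit as a PH-DAE via Proposition~\ref{prop:hamiltonian-network}(a) (model~\eqref{model1}) and to exhibit a skew-symmetric interconnection matrix $\hat C$ that reproduces the coupling condition~\eqref{CouplingCondCircuits}. Following the hint immediately preceding the Lemma, the coupling equation~\eqref{CouplingCondCircuits} is absorbed into the last subcircuit, while subsystems $1,\ldots,\numsys-1$ see the coupling only through additional current-source inputs.

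\textbf{Subsystems $1,\ldots,\numsys-1$.} The term $\Atop[\lambda_{\mathnormal i}]\lambdav$ in~\eqref{SubSystemCircuitsKCL} has the same structural role as the impressed current-source term $\Atop[I_{\mathnormal i}]\csrc[i](t)$. I therefore extend $\Atop[I_{\mathnormal i}]$ to $(\Atop[I_{\mathnormal i}],\Atop[\lambda_{\mathnormal i}])$ and invoke Proposition~\ref{prop:hamiltonian-network}(a). The resulting port matrix $B_i$ splits naturally into $\bar B_i$ (columns attached to $\csrc[i],\vsrc[i]$) and $\hat B_i$ (columns attached to $\lambdav$), giving $\hat u_i=\lambdav$ and $\hat y_i=\hat B_i^{\tr}z_i(x_i)=-\Atop[\lambda_{\mathnormal i}]^{\tr}\npot[i]$; the external $\bar u_i$ and $\bar y_i$ are inherited verbatim from Proposition~\ref{prop:hamiltonian-network}(a).

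\textbf{Subsystem $\numsys$ and the coupling matrix.} In the last subcircuit the coupling currents are treated instead as additional voltage-source currents: I append $\lambdav$ to $\jcurrv[V_{\mathnormal \numsys}]$ and extend $\Atop[V_{\mathnormal \numsys}]$ to $(\Atop[V_{\mathnormal \numsys}],\Atop[\lambda_{\mathnormal \numsys}])$. Proposition~\ref{prop:hamiltonian-network}(a) then produces a PH-DAE whose extra voltage-source input component is declared the coupling input $\hat u_\numsys$, with associated algebraic constraint $\Atop[\lambda_{\mathnormal \numsys}]^{\tr}\npot[\numsys]=\hat u_\numsys$ and coupling output $\hat y_\numsys=-\lambdav$. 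Imposing $\hat u_\numsys=-\sum_{i<\numsys}\Atop[\lambda_{\mathnormal i}]^{\tr}\npot[i]=\sum_{i<\numsys}\hat y_i$ then reproduces~\eqref{CouplingCondCircuits}. Reading the resulting identities in the form $\hat u_i+\sum_{j\ne i}\hat C_{i,j}\hat y_j=0$ yields $\hat C_{i,\numsys}=I$ for $i<\numsys$, $\hat C_{\numsys,j}=-I$ for $j<\numsys$ and zero blocks otherwise. Since $-\hat C_{\numsys,i}^{\tr}=I=\hat C_{i,\numsys}$, the matrix $\hat C$ is skew-symmetric.

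\textbf{Main obstacle.} The delicate point is to verify that the augmented subsystem $\numsys$ is still a PH-DAE in the sense of Definition~\ref{def:PH-network-DAE}. Structurally, appending $\lambdav$ to $\jcurrv[V_{\mathnormal \numsys}]$ and extending $\Atop[V_{\mathnormal \numsys}]$ accordingly is precisely the same operation as adding further voltage sources to the circuit, so the skew-symmetric $J$-matrix, the subspace $\mathcal V$, the accretivity of $r$ on $\mathcal V$ and the Hamiltonian from Proposition~\ref{prop:hamiltonian-network}(a) all carry over unchanged. The remaining care concerns sign bookkeeping: a wrong sign convention in $\hat B_i$ or in the definition of $\hat y_i$ would render $\hat C$ symmetric rather than skew-symmetric, and the splitting chosen above is designed precisely to avoid this pitfall.
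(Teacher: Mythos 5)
Your construction is correct and coincides with the paper's own proof: the paper likewise realizes subsystems $1,\ldots,\numsys-1$ via model~\eqref{model1} with $\hat B_i$ built from $\Atop[\lambda_i]$ (coupling current as extra KCL input) and absorbs $\lambdav$ and the coupling condition~\eqref{CouplingCondCircuits} into the $\numsys$-th subsystem with $\Atop[\lambda_\numsys]$ acting as an extra voltage-source incidence matrix, yielding the identity-block skew-symmetric $\hat C$ you describe (see~\eqref{basis.dynit.network.1tok-1} and~\eqref{basis.dynit.network.k}). Your signs for $\hat u_i$, $\hat y_i$, $\hat B_i$ differ from the paper's by a consistent overall flip, which is immaterial.
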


\begin{proof}
For  $i=1,\ldots,\numsys-1$ we set 
\begin{subequations}
\label{basis.dynit.network.1tok-1}
\begin{align}
x_i &= \begin{pmatrix}
    {\charge[]}_{C_i} \\ {\flux[]}_{L_i} \\ {\npot{}}_i \\ {\jcurrv[V_i]}    \end{pmatrix}, \quad 
z_i = \begin{pmatrix}
    {\npot{}}_i \\  {\jcurrv[L_i]} \\ {u}_{C_i} \\  {\jcurrv[V_i]}
    \end{pmatrix}, \quad
    \bar u_i = \begin{pmatrix}
    \csrc[i]({\tim}) \\ \vsrc[i]({\tim})
    \end{pmatrix}, \quad
    \hat{u}_i+\hat y_k=0, \\
E_i &= \begin{pmatrix}
     \Atop[C_i] & 0 & 0 & 0 \\
        0 & I & 0 & 0 \\
        0 & 0 & 0 & 0 \\
        0 & 0 & 0 & 0 \\
    \end{pmatrix}, \quad 
    J_i =  \begin{pmatrix}
         0 & -\Atop[L_i] & 0 & -\Atop[V_i] \\
         \Atop[L_i]^{\tr} & 0 & 0 & 0 \\
         0 & 0 & 0 & 0 \\
        \Atop[V_i]^{\tr} & 0 & 0 & 0 \\
    \end{pmatrix}, \quad
 \\
    r_i &= \begin{pmatrix}
    \Atop[R_i]g_i(\Atop[R_i]^{\tr}{\npot[]}_i)\\0\\ 
        \Atop[C_i]^\tr {\npot[]}_i - u_{C_i} \\0
    \end{pmatrix}, \quad
    \hat B_i = \begin{pmatrix}
     \Atop[\lambda_i] \\ 0 \\ 0 \\ 0
    \end{pmatrix}, \quad
    \bar B_i = \begin{pmatrix} 
    -\Atop[I_i] & 0  \\ 0 & 0 \\ 0 & 0 \\ 0 & - I
    \end{pmatrix},
\end{align}
\end{subequations}
and for $i=\numsys$ the definition
\begin{subequations}
\label{basis.dynit.network.k}
\begin{align}
x_k &= \begin{pmatrix}
    {\charge[]}_{C_k} \\ {\flux[]}_{L_k} \\ {\npot{}}_k \\ {\jcurrv[V_k]} \\ \lambdav   \end{pmatrix}, \quad 
z_k = \begin{pmatrix}
    {\npot{}}_k \\  {\jcurrv[L_k]} \\ {u}_{C_k} \\  {\jcurrv[V_k]} \\ \lambdav
    \end{pmatrix}, \quad
    \bar u_k  = \begin{pmatrix}
    \csrc[k]({\tim}) \\ \vsrc[k]({\tim})
    \end{pmatrix}, \quad
    \hat{u}_k- \sum_{i=1}^{\numsys-1}\hat y_i=0, \\
E_k &= \begin{pmatrix}
     \Atop[C_k] & 0 & 0 & 0 & 0\\
        0 & I & 0 & 0 & 0 \\
        0 & 0 & 0 & 0 & 0\\
        0 & 0 & 0 & 0 & 0\\
        0 & 0 & 0 & 0 & 0
    \end{pmatrix}, \quad 
    J_k =  \begin{pmatrix}
         0 & -\Atop[L_k] & 0 & -\Atop[V_k] & -\Atop[\lambdav_k] \\
         \Atop[L_k]^{\tr} & 0 & 0 & 0 & 0\\
         0 & 0 & 0 & 0 & 0\\
        \Atop[V_k]^{\tr} & 0 & 0 & 0 & 0\\
        \Atop[\lambdav_k]^{\tr} & 0 & 0 & 0 &0
    \end{pmatrix}, \quad
 \\
    r_k &= \begin{pmatrix}
    \Atop[R_k]g_k(\Atop[R_k]^{\tr}{\npot[]}_k)\\0\\ 
        \Atop[C_k]^\tr {\npot[]}_k - u_{C_k} \\0\\0
    \end{pmatrix}, \quad
    \hat B_k = \begin{pmatrix}
     0 \\ 0 \\ 0 \\ 0 \\ I
    \end{pmatrix}, \quad
    \bar B_k = \begin{pmatrix} 
    -\Atop[I_k] & 0  \\ 0 & 0 \\ 0 & 0 \\ 0 & - I \\ 0 & 0
    \end{pmatrix},
\end{align}
\end{subequations}
completes the proof.
\color{black}
\end{proof}
In addition, the joint system has a PH-DAE formulation, too.
\begin{mylemma}[PH-DAE formulation of coupled circuit DAEs\color{black}]
\label{remark.network.coupled.jpint}
The coupled circuit equations~\eqref{CoupledCircuits}, written as a single system, 
can be represented as 
PH-DAE in the condensed form~\eqref{eq:phdae.coupled.joint.network.condensed}.
\begin{proof}
Here we set
\begin{subequations}
\label{eq.network.coupled.joint}
\begin{align}
    x&:= \begin{pmatrix} \charge[C] \\ \flux[L] \\ \npot{} \\ \jcurrv[V] \\ \lambdav \end{pmatrix}\!, 
        \quad
     z(x) := 
    \begin{pmatrix}
        \npot{} \\  \jcurrv[C] \\ u_C \\\jcurrv[L] \\  \jcurrv[V]
    \end{pmatrix}
        = \begin{pmatrix}
    \npot{} \\  \jcurrv[C] \\ \charge^{-1}(\charge[C]) \\ \flux^{-1}(\flux[L]) \\   \jcurrv[V]
    \end{pmatrix}\!, \\
    E&:=  \begin{pmatrix} \Atop[\Cnet] & 0 & 0& 0 & 0 \\
            0 & I & 0 & 0 & 0\\
            0 & 0 & 0 & 0 & 0\\
            0 & 0 & 0 & 0 & 0 \\
            0 & 0 & 0 & 0 & 0 
    \end{pmatrix}\!,
    \quad
    r:=  \begin{pmatrix}
    \Atop[R] \resi[R](\Atop[R]^{\tr} \npot{},\tim) \\
        0 \\
        \Atop[C]^{\tr} \npot[] - u_C \\
        0 \\
        0 
    \end{pmatrix}\!, \\
    \hat J&:=  \begin{pmatrix}
            0 & - \Atop[L] & 0 & - \Atop[V] & - \Atop[\lambda]   \\
    \Atop[L]^{\tr} & 0 & 0 & 0 & 0\\
    0 & 0 & 0 & 0 & 0 \\
    \Atop[V]^{\tr} & 0 & 0 & 0 & 0 \\
    \Atop[\lambda]^{\tr} & 0 & 0 & 0 & 0 
    \end{pmatrix}\!, \quad
        \bar B:= \begin{pmatrix}
            -\Atop[I] & 0 \\
            0 & 0 \\
            0 & 0 \\
            0 & - I \\
            0 & 0
    \end{pmatrix}\! ,
    \quad 
    \bar u= \begin{pmatrix} \csrc[] \\ \vsrc[] \end{pmatrix} \!,
    \end{align}
\end{subequations}
where we have used aggregrated matrices
\begin{gather*}
 \Atop[R] g =  (\Atop[R_1]
            g_1(\Atop[R_1]^\tr x_1,t),\, \dotsc,\,
            \Atop[R_\numsys] g_\numsys(\Atop[R_\numsys] x_\numsys,t) )^\tr, \quad \Atop[\lambda]^\tr = (\Atop[\lambda_1]^\tr,\, \dotsc,\, \Atop[\lambda_\numsys]^\tr ), \\
        \Atop[P]:=\diag(\Atop[P_1],\, \dotsc, \, \Atop[P_k])
    \quad \mbox{ for } P \in \{C,R,L,V\}
\end{gather*}
and aggregated quantities
\[
w = \begin{pmatrix}
w_1 \\ \vdots \\ w_\numsys
\end{pmatrix}
\quad
\mbox{ for} \quad
w \in \{\charge[C],\,\flux[L],\,\npot,\,u_C,\,\jcurrv[V],\,\jcurrv[C],\,\jcurrv[L]\}.
\]
The Hamiltonian is given as in Proposition~
\ref{prop:hamiltonian-network}
as the sum of the Hamiltonians of the $\numsys$ subsystems.
\end{proof}
\end{mylemma}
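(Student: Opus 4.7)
The plan is to chain together three already-established results rather than verify the conditions from Definition~\ref{def:PH-network-DAE} from scratch. First I would invoke Lemma~\ref{lem:PH-DAE_mutally_coupled_networks} to present \eqref{CoupledCircuits} as a multiply coupled PH-DAE with the subsystem data in \eqref{basis.dynit.network.1tok-1}--\eqref{basis.dynit.network.k}; note that the coupling condition \eqref{CouplingCondCircuits} has already been absorbed into the $k$th subsystem there via the asymmetric assignment $\hat u_i + \hat y_k = 0$ for $i<\numsys$ and $\hat u_k - \sum_{i<\numsys}\hat y_i = 0$, which produces a skew-symmetric block-coupling matrix $\hat C$ whose only non-zero blocks lie in the last block-row and block-column. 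Applying Corollary~\ref{cor.structure.preserving.interconnection.network} then aggregates the subsystems into the joint PH-DAE \eqref{eq:phdae.coupled.joint.network}.

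Next I would invoke Remark~\ref{rem.change.ham.coupled.network}(i) to eliminate the internal ports, arriving at the condensed form \eqref{eq:phdae.coupled.joint.network.condensed} with $\hat J = J - \hat B \hat C \hat B^{\tr}$. The remaining task is to check that the resulting aggregated matrices coincide with those listed in \eqref{eq.network.coupled.joint}. The block-diagonal contributions to $E$, $r$, $\bar B$ and to the uncoupled part of $J$ follow directly from the aggregation rule $F = \diag(F_1,\ldots,F_\numsys)$, while computing $\hat B \hat C \hat B^{\tr}$ from the definitions of $\hat B_i$ and $\hat B_k$ in \eqref{basis.dynit.network.1tok-1}--\eqref{basis.dynit.network.k} produces exactly the $\pm\Atop[\lambda]$ off-diagonal entries of $\hat J$ in \eqref{eq.network.coupled.joint}. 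In this step the coupling current $\lambdav$ emerges as a genuine state component via the extension of $x_k$ in \eqref{basis.dynit.network.k}, rather than as an eliminated internal port.

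The PH-DAE properties are then inherited: skew-symmetry of $\hat J$ on the admissible subspace follows from the block-diagonal skew-symmetry of $J$ together with skew-symmetry of $\hat B\hat C\hat B^{\tr}$ (automatic from $\hat C^{\tr} = -\hat C$); accretivity of $r$ on $\mathcal V$ and the gradient identity $\nabla H(x) = E^{\tr}z(x)$ follow subsystem-wise from Proposition~\ref{prop:hamiltonian-network}, since $\lambdav$ contributes to neither $r$ nor the Hamiltonian. The total Hamiltonian is $H = \sum_{i=1}^{\numsys}\bigl(V_{C_i}(\charge[C_i]) + V_{L_i}(\flux[L_i])\bigr)$, and the admissible subspace is the intersection of the subsystem subspaces, $\mathcal V = \{z \,:\, \Atop[C_i]^{\tr}\npot[i] = u_{C_i},\; i=1,\ldots,\numsys\}$.

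The main obstacle is the bookkeeping required to match the condensed block form with the exact layout in \eqref{eq.network.coupled.joint}, in particular verifying that the $\Atop[\lambda]$ blocks produced by $\hat B\hat C\hat B^{\tr}$ land in the rows and columns shown with the correct signs after absorbing the minus sign in $\hat J = J - \hat B\hat C\hat B^{\tr}$. A convenient self-check is that any sign error would immediately violate the skew-symmetry of $\hat J$, which is independently forced by the skew-symmetry of $\hat C$.
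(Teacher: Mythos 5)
Your proposal is correct, but it takes a genuinely different route from the paper. The paper proves the lemma by direct exhibition: it simply writes down the monolithic data $x$, $z$, $E$, $r$, $\hat J$, $\bar B$, $\bar u$ in aggregated form and appeals to the same verification pattern as Proposition~\ref{prop:hamiltonian-network} (skew-symmetry of $\hat J$, accretivity of $r$ via $g$, Hamiltonian $H=\sum_i V_{C_i}+V_{L_i}$ with the subspace $\Atop[C]^{\tr}\npot=u_C$). You instead derive the condensed monolithic form by chaining Lemma~\ref{lem:PH-DAE_mutally_coupled_networks}, Corollary~\ref{cor.structure.preserving.interconnection.network} and Remark~\ref{rem.change.ham.coupled.network}(i), and then match blocks; this buys automatic inheritance of the PH-DAE properties and makes explicit that the monolithic model is literally the condensation of the multiply coupled one, at the price of a block permutation (subsystem-wise ordering versus quantity-wise aggregated ordering), which you correctly flag as the main bookkeeping burden. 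Two small points of precision: the blocks $\mp\Atop[\lambda_\numsys]$, $\pm\Atop[\lambda_\numsys]^{\tr}$ of the aggregated $\hat J$ do not come from $\hat B\hat C\hat B^{\tr}$ but sit inside $J_\numsys$ itself (since $\lambdav$ is a state of the $\numsys$-th subsystem), so the Schur-type term only supplies the couplings for $i=1,\dotsc,\numsys-1$; and the $z$ you obtain, namely $(\npot^{\tr},\jcurrv[L]^{\tr},u_C^{\tr},\jcurrv[V]^{\tr},\lambdav^{\tr})^{\tr}$, is the one consistent with the displayed $E$, $\hat J$ and $r$ (the lemma's printed $z$, listing $\jcurrv[C]$ and omitting $\lambdav$, appears to be a typographical slip), so the discrepancy there is not a gap in your argument.
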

\color{black}


\color{black}

\section{Index analysis of circuit equations}\label{sec:IndAna}
\label{sec.index}
In the field of DAEs, there exist several index concepts, which quantify the distance to the case of ODEs. This can be done with respect to derivatives needed to transform a DAE into an ODE, i.e., the differentiation index~\cite{Hairer_2002}. On the other hand, the perturbation index~\cite{Hairer_2002} quantifies the distance of the solutions to a perturbed system, with respect to the number of derivatives of the perturbation (which may enter the solution). A third concept is the tractability index  \cite{Griepentrog_1986,LamoMarz13}, which is based on a matrix change and reveals the respective components with the minimal regularity required.
In this work, we focus on the differentiation index, which we refer to as {\em index} throughout this article.
\begin{definition}[Derivative array, differentiation index, {\cite[Def.~3,72]{LamoMarz13}}]\label{def:index}
Let $U,V\subset\real^n$ be open and $\interval\subset\real$ be an interval. Let $\nu\in\mathbbm{N}$, $\mathcal{F}:U\times V\times I\to\real^k$, and a~DAE 
\begin{equation}
 \mathcal{F}(\dot{x}(t),x(t),t)=0\label{eq:dae}
\end{equation}
be given. Then the {\em $\nu$th derivative array of \eqref{eq:dae}} is given by the first $\nu$ formal derivatives of \eqref{eq:dae} with respect to time, that is
\begin{equation}
\mathcal{F}_{\nu}(x^{(\nu+1)}(t),x^{(\nu)}(t),\ldots,\dot{x}(t),x(t),t)
    =\begin{pmatrix}
        \mathcal{F}(\dot{x}(t),x(t),t)\\
        \ddt \mathcal{F}(\dot{x}(t),x(t),t)\\
        \vdots\\
        \frac{\diff^\nu}{\diff t^\nu}   
            \mathcal{F}(\dot{x}(t),x(t),t)
      \end{pmatrix}=0.
\end{equation}
The DAE \eqref{eq:dae} is said to have {\em (differentiation) index $\nu\in\mathbbm{N}$}, if for all $(x,t)\in V\times I$, there exists some unique $\dot{x}\in U$ such that there exist some ${x}^{(2)},\ldots,x^{(\nu+1)}\in U$ such that $\mathcal{F}_{\nu}(x^{(\nu+1)},x^{(\nu)},\ldots,\dot{x},x(t),t)=0$. In this case, there exists some function $f:V\times I\to V$ with $(x,t)\mapsto \dot{x}$ for $t$, $x$ and $\dot{x}$ with the above properties. The ODE
\begin{equation}
\dot{x}(t)=f(x(t),t)\label{eq:inhode}
\end{equation}
is said to be {\em inherent ODE of \eqref{eq:dae}}.
\end{definition}

Next we characterize the index of the circuit equations~\eqref{eq:mna.cond_a} and \eqref{eq:mna.cond_b} by means of the properties of the subgraphs corresponding to specific electric components.

\begin{theorem}\label{thm:index-1-network-conditions}
Assumption~\ref{ass:soundness-passivity} shall hold.
\begin{itemize}
    \item The index $\nu$ of the circuit DAE~\eqref{eq:mna.cond_a} fulfills: $\nu=1$  if, and only if, it neither contains cycles only consisting of edges to capacitances \underline{and} voltage sources nor cuts only consisting of edges to inductances \underline{and/or} current sources. Otherwise, $\nu=2$.
\item The index $\nu$ of the circuit DAE~\eqref{eq:mna.cond_b} fulfills: $\nu=1$  if, and only if, it neither contains cycles only consisting of edges to capacitances \underline{and/or} voltage sources nor cuts only consisting of edges to inductances \underline{and/or} current sources. Otherwise, $\nu=2$.
\end{itemize}
 \end{theorem}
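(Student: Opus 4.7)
The plan is to analyze both models along the same lines: identify the algebraic constraints, differentiate them once, and study when the DAE together with these differentiated constraints determines $\dot x$ uniquely. Since both models are quasilinear with a constant leading matrix $E$, the differentiation index equals the number of rounds of differentiation needed until the derivative-array Jacobian becomes invertible in $\dot x$. I would first pick a basis $Z$ of $\ker E^{\top}$ and project the DAE $E\dot x = Jz(x) - r(z(x)) + Bu$ by $Z^{\top}$: for model~\eqref{eq:mna.cond_a} this isolates the two constraints $A_V^{\top} e = v_s(t)$ and $A_C^{\top} e = q^{-1}(q_C)$, whereas for model~\eqref{eq:mna.cond_b} it picks up these two together with the KCL itself, which in the second model is fully algebraic. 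The remaining (non-projected) rows give explicit expressions for the time derivatives of the state components ($q_C$, $\phi_L$, and in addition $j_C$ for model~(b)), so that only $\dot e$ and $\dot j_V$ need to be recovered from differentiating the constraints.

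Differentiating these constraints once and inserting the already-computed expressions for $\dot q_C$ and $\dot\phi_L$ leads, by a~Schur-complement reduction, to a linear system for $(\dot e,\dot j_V)$. Unique solvability of this system for every right-hand side is equivalent to: $(\textit{a})$ $A_V$ having full column rank, which holds by soundness; and $(\textit{b})$ a rank condition linking the constraint block to the KCL block. For model~(a) the constraint block involves $A_C^{\top}$ only through its interaction with $A_C\dot q_C$, so condition~$(\textit{b})$ becomes $\{x : A_V x \in \im A_C\} = \{0\}$ together with $\ker(A_C\ A_R\ A_V)^{\top}=\{0\}$; by Proposition~\ref{graphalg}(iii) (with $\mathcal{L}$ the C-subgraph) and part~(i), respectively, these translate into the absence of CV-cycles containing at least one voltage-source edge and the absence of LI-cuts. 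For model~(b) the capacitor relation $A_C^{\top}e = u_C$ sits on the same footing as $A_V^{\top}e = v_s$, so condition~$(\textit{b})$ strengthens to $\ker(A_C\ A_V)=\{0\}$ (no cycles at all in the CV-subgraph), which by Proposition~\ref{graphalg}(ii) is the ``\underline{and/or}'' form of the cycle condition. The LI-cut side is identical for the two models because the KCL block enters both analyses in the same way.

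If either topological condition fails, I would show that exactly one further round of differentiation suffices. A forbidden CV-cycle yields a linear combination of $A_V^{\top}e$ and $A_C^{\top}e$ constraints whose first time derivative is free of $\dot j_V$ (it contains only $\ddot q_C$, and hence delivers no new information about $\dot j_V$); a forbidden LI-cut produces a linear combination of KCL rows whose first derivative contains $\ddot\phi_L$ but not $\dot e$. In either case, differentiating the offending combination a second time and invoking positive-definiteness of $C(u_C)$, $L(j_L)$, and of the symmetric part of $\partial g/\partial u_R$ from Assumption~\ref{ass:soundness-passivity}b), the derivative-array Jacobian becomes invertible and the inherent ODE of Definition~\ref{def:index} can be constructed; hence the index is exactly~$2$ in the failure case and never higher.

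The main obstacle I anticipate is the careful bookkeeping of the Schur complement in the presence of the nonlinear resistor Jacobian, which enters through the compound $A_R(\partial g/\partial u_R)A_R^{\top}$. One must verify that only its symmetric, positive-definite part affects the invertibility criterion---otherwise a skew component could spuriously mimic a topological degeneracy, and the index classification would depend on $g$ rather than on the graph alone. Once this is settled, the remaining work is the translation via Proposition~\ref{graphalg}, which is routine and reproduces the classical characterization of \textsc{Est\'evez Schwarz \& Tischendorf}~\cite{Estevez-Schwarz_2000aa} for model~(a) while adapting it to the slightly more rigid model~(b).
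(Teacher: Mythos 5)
Your index-one characterization follows essentially the same route the paper itself takes for model \eqref{eq:mna.cond_a}: project the equations along $\ker E^{\top}$ (in the paper this is the multiplication of the KCL by $Z_C^{\top}$ and $(Z_C')^{\top}$), differentiate the algebraic part once, and decide invertibility of the resulting coefficient matrix; your rank conditions and their translation through Proposition~\ref{graphalg} and Remark~\ref{rem:CVcycles} are exactly the ones the paper arrives at, and your worry about the skew part of $\partial g/\partial u_R$ is resolved there the same way you anticipate (from $v^{\top}A_RGA_R^{\top}v=0$ and positive definiteness of $G+G^{\top}$ one concludes $A_R^{\top}v=0$, so only the symmetric part can enter). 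Where you genuinely diverge: the paper never runs this computation for model \eqref{eq:mna.cond_b}; it performs the index-preserving change of variables $u_C=q^{-1}(q_C)$, $j_L=\phi^{-1}(\phi_L)$ and invokes the quasilinear index theorem of \cite{Rei14}, which delivers the whole trichotomy $\nu\in\{0,1,2\}$ at once, and it then obtains $\nu\le 2$ for \eqref{eq:mna.cond_a} by an invertible row transformation embedding \eqref{eq:mna.cond_a} into \eqref{eq:mna.cond_b}, so that its index cannot exceed that of \eqref{eq:mna.cond_b}. Your plan replaces both shortcuts by a direct second differentiation; this is self-contained but buys you the classical Est\'evez Schwarz--Tischendorf bookkeeping that the paper deliberately avoids.

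That replacement is also where the proposal is thinnest. Proving ``otherwise $\nu=2$'' requires showing that the \emph{entire} second derivative array determines $\dot x$ uniquely (with CV-cycles and LI-cuts possibly present simultaneously), not merely that the ``offending combinations'' become resolvable after one more differentiation, and your sketch of that step is garbled in detail (the first derivative of a CV-loop relation contains $\dot q_C$, not $\ddot q_C$). Two further misstatements in the setup should be repaired, although they do not affect your final conditions: for model \eqref{eq:mna.cond_a} the projection along $\ker E^{\top}$ isolates a third constraint, namely $Z_C^{\top}$ applied to the KCL, which is precisely where $Z_C^{\top}A_RGA_R^{\top}\dot e$ and $Z_C^{\top}A_V\dot j_V$ later enter; and the unprojected KCL rows only give $A_C\dot q_C$, not $\dot q_C$ itself, so when $A_C$ lacks full column rank $\dot q_C$ must be recovered from the differentiated capacitor relation $\dot q_C=C(A_C^{\top}e)A_C^{\top}\dot e$ before your Schur reduction to $(\dot e,\dot j_V)$ makes sense; similarly, in model \eqref{eq:mna.cond_b} the derivative $\dot j_C$ is an additional unknown, fixed by the differentiated KCL together with $\ker(A_C\ A_V)=\{0\}$. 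With these points corrected your argument reproduces the theorem, at the cost of more computation than the paper's combination of the change of variables, the citation, and the model comparison.
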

\begin{remark}\label{rem:CVcycles}
\begin{itemize}
    \item[(a)]\label{rem:CVcycles_a} There is a~small but nice difference between the indices of DAEs~\eqref{eq:mna.cond_a} and \eqref{eq:mna.cond_b}: Whereas cycles only consisting of edges of capacitances lead to an index  $\nu=2$ of \eqref{eq:mna.cond_b}, this is not necessarily the case for the DAE \eqref{eq:mna.cond_a}. Since cycles only consisting of voltage sources are excluded beforehand by Assumption~\ref{ass:soundness-passivity}, the absence of cycles only consisting of edges to capacitances \underline{and} voltage sources is equivalent to the property of a~circuit that it does not contain any cycles consisting of capacitances \underline{and/or} voltage sources \underline{except for} cycles consisting of capacitances. The latter is, by Proposition~\ref{graphalg}, equivalent to
\begin{equation}\{\jcurrv[V]\in\real^{n_V}\,|\, \Atop[V]\jcurrv[V]\in\im \Atop[C]\}=\{0\}.\label{eq:CVcyc1}\end{equation} 
Now consider a~matrix $Z_C$ with full column rank and $\im Z_C=\ker\Atop[C]^{\tr}$. Then, by taking the orthogonal complement, we obtain $\ker Z_C^{\tr}=\im \Atop[C]$, and a~combination with \eqref{eq:CVcyc1} leads to the fact that a~circuit fulfilling Assumption~\ref{ass:soundness-passivity} does not contain any cycles consisting of capacitances \underline{and} voltage sources if, and only if, 
\begin{equation}\ker Z_C^{\tr}\Atop[V]=\{0\}.\label{eq:CVcyc2}\end{equation} 
\item[(b)]\label{rem:CVcycles_b} Theorem~\ref{thm:index-1-network-conditions} shows that the index is a structural invariant of the circuit equation. That is, it depends on the interconnection properties of the circuit rather than on parameter values. Notice that our index results are a~slight modification of those in \cite{Estevez-Schwarz_2000aa}, where an index analysis for the modified nodal analysis and charge-oriented  modified nodal analysis has been performed. A~combination of the results from \cite{Estevez-Schwarz_2000aa} with Theorem~\ref{thm:index-1-network-conditions} yields that the circuit DAE \eqref{model1} has index two if, and only if, the MNA equations being subject of \cite{Estevez-Schwarz_2000aa} have index two.
\end{itemize}
\end{remark}

\begin{proof}
 We start with the index result for the DAE~\eqref{eq:mna.cond_b}. To this end notice that the diffeomorphism
 \[    \begin{pmatrix}
    \npot{} \\ {\jcurrv[C]} \\ {\charge[C]} \\ {\flux[L]} \\ {\jcurrv[V]}
    \end{pmatrix}\mapsto \begin{pmatrix}
    \npot{} \\ {u_C} \\  {\jcurrv[L]} \\ {\jcurrv[C]} \\{\jcurrv[V]}
    \end{pmatrix}=\begin{pmatrix}
    \npot{} \\ {\charge^{-1}(\charge[C])} \\ {\flux^{-1}(\flux[L])} \\{\jcurrv[C]} \\ {\jcurrv[V]}
    \end{pmatrix}\]
 applied to the unknown of the DAE \eqref{eq:mna.cond_b} does not change the index, and, by a~suitable permutation of the equations, results in the DAE
\begin{align}
    \nonumber
    \begin{pmatrix}
     0 & 0 & 0 & 0 & 0 \\
     0 & C(u_C) & 0 & 0 & 0 \\
     0 & 0 & L(\jcurrv[L]) & 0 & 0 \\
     0 & 0 & 0 & 0 & 0 \\
     0 & 0 & 0 & 0 & 0 \\
    \end{pmatrix}
    \!\!
    \begin{pmatrix}
    \dot{\npot{}} \\ \dot{u}_C \\  \dot{\jcurrv[L]}\\ \dot{\jcurrv[C]}  \\ \dot{\jcurrv[V]}
    \end{pmatrix}
    & =
    \begin{pmatrix}
         0 &  0 &-\Atop[L] & -\Atop[C] & -\Atop[V] \\
        0 & 0 & 0 & I & 0 \\
        \Atop[L]^{\tr} & 0 & 0 & 0 & 0 \\
        \Atop[C]^{\tr} & -I & 0 & 0 & 0 \\
        \Atop[V]^{\tr} & 0 & 0 & 0 & 0 \\
    \end{pmatrix}
    \begin{pmatrix}
    {\npot{}}  \\ {u}_C\\  {\jcurrv[L]} \\{\jcurrv[C]} \\ {\jcurrv[V]}
    \end{pmatrix}
    \\
    & \label{eq:mna.cond_b_trafo}
    \qquad  -
    \begin{pmatrix}
    \Atop[R]g(\Atop[R]^{\tr}\npot{})\\0\\0\\0\\0
    \end{pmatrix}
+ \begin{pmatrix}
    -\Atop[I] & 0 \\ 0 & 0 \\ 0 & 0 \\ 0 & 0 \\ 0 & - I
    \end{pmatrix}
    \!\!\begin{pmatrix}
    \csrc({\tim}) \\ \vsrc({\tim})
    \end{pmatrix}\! ,
\end{align}
Then Assumption~\ref{ass:soundness-passivity} yields that we are in the situation of \cite[Thm.~6.6]{Rei14}, which yields that the index $\nu$ of \eqref{eq:mna.cond_b_trafo} fulfills
\begin{itemize}
    \item $\nu=0$ if, and only if, the matrix in front of the derivative of the state is invertible. That is, the vectors of potentials, capacitive currents and currents of voltage sources are void.
    \item $\nu=1$ if, and only if, $\nu\neq0$ and
    \begin{align}
        &\ker\begin{pmatrix}
        0&\Atop[R]& -\Atop[C] & -\Atop[V] \\
        C(u_C) & 0 & I & 0 
    \end{pmatrix}^\tr=\{0\}\quad\wedge\quad \label{eq:LI}\\
    &\ker \begin{pmatrix}
        0&0 \\
        0&C(u_C)
    \end{pmatrix}\times\{0\}\times\{0\}=\ker \begin{pmatrix}
        0&0& -\Atop[C] & -\Atop[V] \\
        0&C(u_C)& I & 0 
    \end{pmatrix}\label{eq:CV}
    \end{align}
    \item $\nu=2$ otherwise.
\end{itemize}
The soundness assumption that the circuit has at least one edge implies that the vector of potentials is non-void. Hence, the index of the circuit equations \eqref{eq:mna.cond_b_trafo} is not equalling to zero.\\ 
Further, since \eqref{eq:LI} is equivalent to $(\Atop[C]\, \Atop[R]\, \Atop[V] )$ having full row rank and \eqref{eq:CV}
is equivalent to the full column rank property of $(\Atop[C]\, \Atop[V] )$, we obtain from Proposition~\ref{graphalg} that $\nu=1$ is equivalent to the absence of cycles only consisting of edges to capacitances and/or voltage sources, as well as cuts only consisting of edges to capacitances and/or voltage sources. This completes the proof for the circuit equations
 \eqref{eq:mna.cond_b}.\\
 To prove the index result \eqref{eq:mna.cond_a}, first notice that the characterization for $\nu=0$ follows by the same argumentation as for \eqref{eq:mna.cond_b}. Further notice that a~multiplication of \eqref{eq:mna.cond_b} from the left with a~suitable invertible matrix $T$ and a re-ordering of the state components leads to the DAE
 \begin{align}
    \nonumber
    \ddt\begin{pmatrix}
     \Atop[C] & 0 & 0 & 0 & 0\\
        0 & I & 0 & 0 & 0\\
        0 & 0 & 0 & 0 & 0\\
        0 & 0 & 0 & 0 & 0\\
        0 & 0 & 0 & 0 & 0\\
    \end{pmatrix}
    \!\!
    \begin{pmatrix}
    \charge[C] \\ \flux[L] \\ \npot{} \\ \jcurrv[V] \\u_C    \end{pmatrix}
    & =
    \begin{pmatrix}
         0 & -\Atop[L] & 0 & -\Atop[V] & 0 \\
         \Atop[L]^{\tr} & 0 & 0 & 0 & 0 \\
         0 & 0 & 0 & 0 & 0 \\
        \Atop[V]^{\tr} & 0 & 0 & 0 & 0 \\
        0 & 0 & 0 & 0 & 0 \\
    \end{pmatrix}
    \begin{pmatrix}
    \npot{} \\  \flux^{-1}(\flux[L]) \\ \charge^{-1}(\charge[C]) \\  \jcurrv[V]\\u_C
    \end{pmatrix} 
    \\
    & \label{eq:mna.cond_a_mod}
    \qquad -
    \begin{pmatrix}
    \Atop[R]g(\Atop[R]^{\tr}\npot{})\\0\\ 
        \Atop[C]^\tr \npot - \charge^{-1}(\charge[C]) \\0 \\ \Atop[C]^\tr \npot - u_C
    \end{pmatrix}+ \begin{pmatrix}
    -\Atop[I] & 0  \\ 0 & 0 \\ 0 & 0 \\ 0 & - I \\ 0 & 0
    \end{pmatrix}
    \!\!\begin{pmatrix}
    \csrc({\tim}) \\ \vsrc({\tim})
    \end{pmatrix}\! ,
\end{align}
The upper four equations is exactly the DAE \eqref{eq:mna.cond_a} whereas the variable $u_C$ appears explicitly in the last equation. It can now be inferred from Definition~\ref{def:index} that the index of \eqref{eq:mna.cond_a} does not exceed that of \eqref{eq:mna.cond_b}. By the already proven results for  \eqref{eq:mna.cond_b}, this implies that $\nu\leq2$. Hence it suffices to prove that 
the absence of cycles only consisting of edges to capacitances and voltage sources as well as cuts only consisting of edges to inductances and/or current sources is necessary and sufficient for $\nu\leq1$:\\
To this end, consider matrices $Z_C$, $Z_C'$ with full column rank and $\im Z_C=\ker \Atop[C]^{\tr}$, $\im Z_C'=\im \Atop[C]$. Then $[Z_C\,Z_C']$ is an invertible matrix. 
Now we multiply the first equation in \eqref{eq:mna.cond_a} from the left with $Z_C^{\tr}$ and $(Z_C')^{\tr}$ to obtain an equivalent DAE
\[\begin{aligned}
Z_C^{\tr}\Atop[L]\flux^{-1}(\flux[L])+Z_C^{\tr}\Atop[V]\jcurrv[V]+Z_C^{\tr}\Atop[R]g(\Atop[R]^{\tr}\npot)+Z_C^{\tr}\Atop[I] \csrc(t) 
&=0,\\[1mm]
\ddt(Z_C')^{\tr}\Atop[C]\charge[C]+(Z_C')^{\tr}\Atop[L]\flux^{-1}(\flux[L])+(Z_C')^{\tr}\Atop[V]\jcurrv[V]\qquad&\\+(Z_C')^{\tr}\Atop[R]g(\Atop[R]^{\tr}\npot)+(Z_C')^{\tr}\Atop[I] \csrc(t) 
&=0,\\[1mm]
\ddt \flux[L]-\Atop[L]^{\tr}\npot&=0,\\[1mm]
-\Atop[C]^{\tr}\npot+\charge^{-1}(\charge[C])&=0,\\[1mm]
-\Atop[V]^{\tr}\npot+\vsrc({\tim})&=0.
\end{aligned}\]
The first, forth and fifth equation are now purely algebraic, and will be differentiated in the next step. Using the differentiation rule for inverse functions, we obtain that, for $C$ and $L$ as in Assumption~\ref{ass:soundness-passivity} holds
\[\ddt \charge^{-1}(\charge[C])= C(\charge^{-1}(\charge[C]))^{-1}\ddt \charge[C],\quad \ddt \flux^{-1}(\flux[L])= L(\flux^{-1}(\flux[L]))^{-1}\ddt \flux[L].\]
We further abbreviate $C=C(\charge^{-1}(\charge[C]))$, $L=L(\flux^{-1}(\flux[L]))$ and $G=\frac{\diff g}{\diff u_R}(\Atop[R]^{\tr}\npot)$.
A~differentiation of the algebraic equations now gives
\begin{align*}
&    \nonumber
    \begin{pmatrix}
     0& -Z_C^{\tr}\Atop[L]^{\tr}L^{-1} & -Z_C^{\tr}\Atop[R]G\Atop[R]^{\tr} & -Z_C^{\tr}\Atop[V] \\
    (Z_C')^{\tr} \Atop[C] & 0 & 0 & 0 \\
        0 & I & 0 & 0 \\
        C^{-1} & 0 & \Atop[C]^{\tr} & 0 \\
        0 & 0 & \Atop[V]^{\tr} & 0 \\
    \end{pmatrix}
    \!\!
    \begin{pmatrix}
    \dot{\charge[C]} \\ \dot{\flux[L]} \\ \dot{\npot{}} \\ \dot{\jcurrv[V]}     \end{pmatrix} 
    \\
    & \hspace*{15ex} 
     =
    \begin{pmatrix}
         0 & 0 & 0 & 0 \\
         0 & -(Z_C')^{\tr}\Atop[L] & 0 & -(Z_C')^{\tr}\Atop[V] \\
         \Atop[L]^{\tr} & 0 & 0 & 0 \\
         0 & 0 & 0 & 0 \\
        0 & 0 & 0 & 0 \\
    \end{pmatrix}
    \begin{pmatrix}
    \npot{} \\  \flux^{-1}(\flux[L]) \\ \charge^{-1}(\charge[C]) \\  \jcurrv[V]
    \end{pmatrix} 
    \\
    &
    \hspace{17ex} -
    \begin{pmatrix}
     0 \\
    (Z_C')^{\tr}\Atop[R]g(\Atop[R]^{\tr}\npot{})\\0\\ 
         0  
        \\0
    \end{pmatrix}+ \begin{pmatrix}
    0 & (Z_C')^{\tr}\Atop[I] & 0 \\
    -(Z_C')^{\tr}\Atop[I] & 0& 0  \\ 0 & 0& 0 \\ 0 &0& 0 \\ 0 &0& - I
    \end{pmatrix}
    \!\!\begin{pmatrix}
    \csrc({\tim}) \\\ddt\csrc({\tim}) \\ \ddt\vsrc({\tim})
    \end{pmatrix}\! .
\end{align*}
The definition of the index implies that $\nu\leq1$ if and only if, the matrix in front of the derivative is invertible. By applying elementary row operations to that matrix, we see that 
\begin{equation}
    \nu\leq1\;\Longleftrightarrow \ker\underbrace{\begin{pmatrix}
     0& 0 & -Z_C^{\tr}\Atop[R]G\Atop[R]^{\tr} & -Z_C^{\tr}\Atop[V] \\
    0 & 0 & - (Z_C')^{\tr} \Atop[C]C\Atop[C]^{\tr} & 0 \\
        0 & I & 0 & 0 \\
        C^{-1} & 0 & \Atop[C]^{\tr} & 0 \\
        0 & 0 & \Atop[V]^{\tr} & 0 \\
    \end{pmatrix}}_{=:\widetilde{E}}=\{0\}.\label{eq:mu1}
\end{equation}
If the circuit contains cycles consisting of edges to capacitances and voltage sources or cuts consisting of edges to inductances and/or current sources, then, by Proposition~\ref{graphalg} \& Remark~\ref{rem:CVcycles} $\ker Z_C^{\tr}\Atop[V]\neq\{0\}$. Both lead to $\ker\widetilde{E}\neq\{0\}$ and thus, by \eqref{eq:mu1}, to $\nu>1$.\\
To prove the reverse direction, assume that the circuit neither contains cycles consisting of edges to capacitances and voltage sources nor cuts consisting of edges to inductances and/or current sources. Taking an accordingly partitioned vector $x=(x_1^{\tr}\,x_2^{\tr}\,x_3^{\tr}\,x_4^{\tr})^{\tr}\in\ker\widetilde{E}$, 
we see immediatly that $x_2=0$ holds.
We obtain from the positive definiteness and the fact that $\ker (Z_C')^{\tr}$ equals to the orthogonal complement of $\im \Atop[C]$ that
\[\ker (Z_C')^{\tr} \Atop[C]C\Atop[C]^{\tr}=\ker \Atop[C]^{\tr}.\] Hence, $ x_3 
\in\ker \Atop[C]^{\tr}$, which leads to $x_3=Z_Cw_3$ 
for some real vector $w_3$ 
of suitable size. In particular, $\widetilde{E}x=0$ leads to $\Atop[V]^{\tr}Z_C w_3
=0$, whence $w_3
=Z_{V-C}z_3$ for a~real vector $z_3$ 
and a~matrix $Z_{V-C}$ with full column rank and $\im Z_{V-C}=\ker \Atop[V]^{\tr}Z_C$. A~multiplication of the first row of $\widetilde{E}x=0$ with $Z_{V-C}^{\tr}$ gives, by using $Z_{V-C}^{\tr}Z_C^{\tr}\Atop[V]=0$,
\[0=Z_{V-C}^{\tr}Z_C^{\tr}\Atop[R]G\Atop[R]^{\tr} x_3
=
Z_{V-C}^{\tr}Z_C^{\tr}\Atop[R]G\Atop[R]^{\tr}Z_CZ_{V-C} z_3
\]
and the positive definiteness of $G+G^{\tr}$ (which holds by Assumption~\ref{ass:soundness-passivity}) leads to 
 $\Atop[R]^{\tr}Z_CZ_{V-C} z_3
 =0$. 
 By Proposition~\ref{graphalg} \& Remark~\ref{rem:CVcycles}, the absence of the aforementioned cycles and cuts leads to $\ker (\Atop[C]\,\Atop[R]\,\Atop[V])^{\tr}=\{0\}$ or $\ker Z_C^{\tr}\Atop[V]=\{0\}$.
 The first condition yields $z_3=0$ and thus $x_3=0$, and the second one $x_4=0$. With $x_3=0$, the positive-definiteness of $C$ then finally leads to $x_1=0$. Summing up, we obtain $x=0$, 
 and the index of \eqref{eq:mna.cond_b} equals to one.
\end{proof}
\section{Modeling of coupled circuit DAEs and dynamic iteration schemes}
\label{sec:network-network}

%
%


\if 0
Let the overall circuit (with given Assumption \ref{ass:soundness-passivity}), consist of subcircuits $i=1,\ldots,\numsys$. We use the index $i$ to identify the quantities of the $i$th subcircuit, e.g.~we use $\npot[i](t)\in\real^{n_{u_i}}$ for the vertex potentials and so on.
Moreover, we assume that we have $n_{\lambda}$ coupling edges linking the $\numsys$ subcircuits in the overall setting. Then we have  associated edge
currents $\lambdav(t) \in \real^{n_{\lambda}}$ and  $n_{\lambda}$ artificial voltage source. Now, let the $i$th subsystem have the respective incidence matrix $\Atop[\lambda_{\mathnormal i}] \in \{ -1,\, 0,\, 1 \}^{n_{u_i}\times n_{\lambda}}$ for the artificial voltage sources. Thus the coupling amounts to (i) an additional term in the KCL ($i$th circuit), for the coupling edge/current: $\Atop[\lambda_{\mathnormal i}] \lambdav$. In fact, one can model this by adding this contribution to the current source term ($\Atop[I]$):
\[
   \Atop[I_{\mathnormal i}] \rightsquigarrow \left( \Atop[I_{\mathnormal i}],\, \Atop[\lambda_{\mathnormal i}] \right),
        \quad \csrc[i] \rightsquigarrow \begin{pmatrix} \csrc[i] \\ \lambdav \end{pmatrix} .
\]
Due to the virtuality of the coupling voltage sources, one has (ii) to guarantee that the vertex potentials at the boundaries coincide , as done in \eqref{CouplingCondCircuits}, see below.

\begin{figure}
        \centering
    \scalebox{0.7}{\input{monolithic-view-coupled.pdf_t}} \hfill
    \scalebox{0.7}{\input{subsystems-view-coupled.pdf_t}}

        \caption{\label{fig.macro}
         Input/output for distributed circuits: monolithic view (left), coupled circuits view (right).}
\end{figure}

In the end, the circuit equations for the $\numsys$ coupled circuit DAEs are comprised by the subsystems $i=1,\dotsc,\numsys$:
\begin{subequations}\label{CoupledCircuits}
\begin{align}\label{SubSystemCircuitsKCL}
%
 \vect{0} & =
 \Atop[C_{\mathnormal i}] \ddt \charge[C_{\mathnormal i}](\Atop[C_{\mathnormal i}]^{\tr} \npot[i],\tim)
    + \Atop[R_{\mathnormal i}]  \resi_i ( \Atop[R_{\mathnormal i}]^{\tr} \npot[i], t)
    + \Atop[L_{\mathnormal i}] \jcurrv[L_{\mathnormal i}]
    + \Atop[V_{\mathnormal i}] \jcurrv[V_{\mathnormal i}] 
%
    + \Atop[I_{\mathnormal i}] \csrc[i] (t)
    + \Atop[\lambda_{\mathnormal i}] \lambdav
 \\
 											\label{SubSystemCircuitsMitte}
 \vect{0} & =  \ddt \flux[L_{\mathnormal i}](\jcurrv[L_{\mathnormal i}],\tim)
    - \Atop[L_{\mathnormal i}]^{\tr} \npot[i]
 \\
 \vect{0} & =  \Atop[V_{\mathnormal i}]^{\tr} \npot[i] - \vsrc[i](t)
 %
  \label{SubSystemCircuitsIdFlux}
\end{align}
together with a set of linear coupling equations
\begin{equation}\label{CouplingCondCircuits}
  \vect{0} = \sum_{i=1}^\numsys \Atop[\lambda_{\mathnormal i}]^{\tr} \,\npot[i] \, .
\end{equation}
\end{subequations}

\begin{remark}
\label{remark.network.coupled.jpint}
The coupled circuit equations~\eqref{CoupledCircuits}, written as a single system, 
\color{blue}
define a PH-DAE system 
\begin{equation}
\label{eq.network.coupled.joint}
\begin{aligned}
\ddt E^{joint} x^{joint} &= J^{joint} z^{joint}(x^{joint}) - r^{joint}(z^{joint}(x^{joint})) + B^{joint} u^{joint}, \\
y^{joint}&=B^{joint,\top} u^{joint},
\end{aligned}
\end{equation}
with 
\[
    x^{joint}= \begin{pmatrix} \charge[] \\ \flux[] \\ \npot{} \\ \jcurrv[V] \\ \lambdav \end{pmatrix}\!, 
        \quad
        z^{joint}(x^{joint}) = 
    \begin{pmatrix}
        \npot{} \\  \jcurrv[C] \\ u_C \\\jcurrv[L] \\  \jcurrv[V]
    \end{pmatrix}
        = \begin{pmatrix}
    \npot{} \\  \jcurrv[C] \\ \charge^{-1}(\charge[C]) \\ \flux^{-1}(\flux[L]) \\   \jcurrv[V]
    \end{pmatrix}\!,
    \]
    \[
    E^{joint}:=  \begin{pmatrix} \Atop[\Cnet] & 0 & 0& 0 & 0 \\
            0 & I & 0 & 0 & 0\\
            0 & 0 & 0 & 0 & 0\\
            0 & 0 & 0 & 0 & 0 \\
            0 & 0 & 0 & 0 & 0 
    \end{pmatrix}\!,
    \quad
    r^{joint}:=  \begin{pmatrix}
    \Atop[R] \resi[R](\Atop[R]^{\tr} \npot{},\tim) \\
        0 \\
        \Atop[C]^{\tr} \npot[] - u_C \\
        0 \\
        0 
    \end{pmatrix}\!,
    \]
    \[
    J^{joint}:=  \begin{pmatrix}
            0 & - \Atop[L] & 0 & - \Atop[V] & - \Atop[\lambda]   \\
    \Atop[L]^{\tr} & 0 & 0 & 0 & 0\\
    0 & 0 & 0 & 0 & 0 \\
    \Atop[V]^{\tr} & 0 & 0 & 0 & 0 \\
    \Atop[\lambda]^{\tr} & 0 & 0 & 0 & 0 
    \end{pmatrix}\!, \quad
        B^{joint}:= \begin{pmatrix}
            -\Atop[I] & 0 \\
            0 & 0 \\
            0 & 0 \\
            0 & - I \\
            0 & 0
    \end{pmatrix}\! ,
    \quad 
    u^{joint}= \begin{pmatrix} \csrc[] \\ \vsrc[] \end{pmatrix} \!,
\]
where we have used aggregrated matrices
\begin{gather*}
 \Atop[R] g =  (\Atop[R_1]
            g_1(\Atop[R_1]^\tr x_1,t),\, \dotsc,\,
            \Atop[R_\numsys] g_\numsys(\Atop[R_\numsys] x_\numsys,t) )^\tr, \quad \Atop[\lambda]^\tr = (\Atop[\lambda_1]^\tr,\, \dotsc,\, \Atop[\lambda_\numsys]^\tr ), \\
        \Atop[P]:=\diag(\Atop[P_1],\, \dotsc, \, \Atop[P_k])
    \quad \mbox{ for } P \in \{C,R,L,V\}
\end{gather*}
and aggregated quantities
\[
w = \begin{pmatrix}
w_1 \\ \vdots \\ w_\numsys
\end{pmatrix}
\quad
\mbox{ for} \quad
w \in \{\charge,\flux,\npot,u_C,\jcurrv[V],\jcurrv[C],\jcurrv[L]\}.
\]
\color{black}
The Hamiltonian is given as in Proposition~
\ref{prop:hamiltonian-network}
as the sum of the Hamiltonians of the $\numsys$ subsystems.
\end{remark}
\color{black}
\fi

Regarding the coupled circuit DAEs~\eqref{CoupledCircuits} discussed in Section~\ref{sec3.4}, we can take three different perspectives with respect to the input. We will formulate the corresponding circuit equations as PH-DAE systems of type~\eqref{model1}. Note that a~modification of the considerations in this section to the alternative circuit model \eqref{model2} is straightforward. 

Different views on coupled electrical circuits are possible:
\begin{itemize}[leftmargin=6ex]
\item[(C1)] Here all $k$ subsystems, together with the coupling equation, are considered as one system, the PH-DAE system~\eqref{eq.network.coupled.joint}  with state  $\vect{x}^{\tr}:=\bigl(  \charge[C]^{\tr},\,\flux[L]^{\tr},\,\npot[]^{\tr},\,\jcurrv[V]^{\tr},\, \lambdav^{\tr} \bigr)$, and given input $(\csrc^{\tr},\,\vsrc^{\tr})^{\tr}$. \color{black}
\item[(C2)]  We consider the $i$th subsystem separately, with term $\hat u_i = - \lambdav$ arising from the virtual voltage source regarded as an additional input to the system, i.e., the PH-DAE system~\eqref{basis.dynit.network.1tok-1} 
 with state  $\vect{x_i}^{\tr}:=\bigl( \charge[C_i]^{\tr},\, \flux[L_i]^{\tr},\,\npot[i]^{\tr},\, \jcurrv[V_i]^{\tr} \bigr)$, and given input $(\lambdav^{\tr},\csrc^{\tr},\,\vsrc^{\tr})^{\tr}$.\color{black}
\item[(C3)] We consider the $i$th subsystem separately together with the coupling condition, i.e., the PH-DAE system~\eqref{basis.dynit.network.k} with state  $\vect{x_i}^{\tr}:=\bigl( \charge[C_i]^{\tr},\,\flux[L_i]^{\tr},\,\npot[i]^{\tr},\, \jcurrv[V_i]^{\tr},\,\lambdav^{\tr} \bigr)$. Now the vertex potentials $\npot[1],\ldots\npot[k-1]$ add to the input $\hat u_\numsys = \sum_{i=1}^{\numsys-1} \Atop[\lambdav_i]^{\tr} \npot[i]$.\color{red}
\end{itemize}
\color{black}

%

\subsection{Structural properties}

In the following, we investigate the index properties of the $\numsys$ coupled electric circuits, where each subcircuit is assumed to fulfill Assumption~\ref{ass:soundness-passivity}. In particular, each subcircuit is connected and the component matrices have the property that $\vect{C}_i$, $\vect{L}_i$ and $\vect{G}_i+\vect{G}_i^{\tr}$ of each subsystem ($i=1,\dotsc, \numsys$) are pointwise positive definite.\\ \color{black}
We can have different points of view: either regarding the overall system as one joint system or regard just a subsystem with given input, potentially linked to the coupling system or to a part of it. This amounts to certain index assumptions on the overall system (C1) as well as for the subsystems (C2) and (C3). More precisely we will assume that the systems (C1), (C2) and (C3) have index one. Note that,
even in the case that both conditions {(C1)} and {(C2)} are present, condition {(C3)} may not hold.
However, (C3) implies (C1). Of course, it is not a necessary assumption. \color{black}

%


%

%
%


\paragraph{Monolithic perspective.}
For the overall system~\eqref{CoupledCircuits}, the virtual voltage sources extend the set of voltage sources. Thus Theorem~\ref{thm:index-1-network-conditions} yields that the coupled system~\eqref{CoupledCircuits} has index one if, and only if, the circuit neither contains cuts consisting of inductances and/or current sources nor cycles consisting of edges to capacitances and voltage sources. By Proposition~\ref{graphalg} \& Remark~\ref{rem:CVcycles}, this is equivalent to both matrices 
%
%
\begin{equation} \label{eq:global-ACAGAV}
\left(
\begin{pmatrix} \Atop[C_1] & & \\
		& \ddots & \\
		& 	 & \Atop[C_\numsys]
\end{pmatrix},
\begin{pmatrix} \Atop[R_1] & & \\
		  & \ddots & \\
		  & & \Atop[R_\numsys]
\end{pmatrix},
\begin{pmatrix} \Atop[V_1] & &  & \Atop[\lambda_1]\\
		  & \ddots & & \vdots \\
		  & & \Atop[V_\numsys] & \Atop[\lambda_\numsys]
\end{pmatrix}
\right)^{\tr}
\end{equation}
\begin{equation} \label{eq:global-QC-AV}
\begin{pmatrix} \vect{Z}_{C_1}^{\tr} & & \\
		      & \ddots & \\
		      & & \vect{Z}_{C_\numsys}^{\tr}
\end{pmatrix}
\cdot
\begin{pmatrix} \Atop[V_1] & &  & \Atop[\lambda_1]\\
		  & \ddots & & \vdots \\
		  & & \Atop[V_\numsys] & \Atop[\lambda_\numsys]
\end{pmatrix}
\end{equation}
having full column rank.
The latter is equivalent to the full column rank of
\[\displaystyle    \begin{pmatrix}
    \vect{Z}_{V_1-C_1}^{\tr} \vect{Z}_{C_1}^{\tr} \Atop[\lambda_1]  \\
	  \vdots \\
    \vect{Z}_{V_\numsys-C_\numsys}^{\tr} \vect{Z}_{C_\numsys}^{\tr} \Atop[\lambda_\numsys],
\end{pmatrix}
\]
with $\vect{Z}_{C_i}$ and $\vect{Z}_{V_i-C_i}$ being matrices with full column rank and $\im\vect{Z}_{C_i}=\ker \Atop[C_i]^{\tr}$, $\im \vect{Z}_{V_i-C_i}=\ker\Atop[V_i]^{\tr}\vect{Z}_{C_i}$.
\color{black}

\paragraph{Single subsystem perspective.}
We can apply Theorem~\ref{thm:index-1-network-conditions} to the $i$th subsystem~(\ref{SubSystemCircuitsKCL}--\ref{SubSystemCircuitsIdFlux}) to obtain that its index is one if, and only if, the subcircuit neither contains  cuts consisting of inductances and/or current sources nor cycles consisting of edges to capacitances and voltage sources. By Proposition~\ref{graphalg} \& Remark~\ref{rem:CVcycles}, this is equivalent to the full column rank property of the matrices
\begin{align} \label{eq:c2-conditions}
 \vect{Z}_{C_i}^{\tr} \Atop[V_i], \quad
 & (\Atop[C_i],\, \Atop[R_i],\, \Atop[V_i])^{\tr}.  
\end{align}
\color{black}

\paragraph{Subsystem plus coupling equation.}
This DAE has index one if, and only if, 
the subcircuit neither contains cuts consisting of inductances and/or current sources, nor cycles consisting of edges to capacitances together with voltage sources and/or virtual voltage sources. By Proposition~\ref{graphalg} \& Remark~\ref{rem:CVcycles}, this is equivalent to the property that the subsequent two matrices have full column rank:
\begin{align} \label{eq:c3-conditions}
 & (\Atop[C_i],\, \Atop[R_i],\, \Atop[V_i],\, \Atop[\lambda_i])^{\tr}, \quad
    \vect{Z}_{C_i}^{\tr} \left( \Atop[V_i], \Atop[\lambda_i]\right).
\end{align}\color{black}

\subsection{Dynamic iteration perspective on modeling}
\label{sect:dynamic_iteration_perspective}
Dynamic iteration schemes exploit the coupling structure of system~\eqref{CoupledCircuits} by solving subsystems independently and defining a suitable information update.
Let us assume that a numerical approximation 
$(\tilde{\charge[]},\tilde{\flux[]},\tilde{\npot{}},  {\tjcurrv[V]}, \tilde \lambda)$
is given for a time window $[t_{n-1},t_n]$, then a new approximate for the next time window $[t_{n},t_{n+1}]$
can be iteratively derived by the following two steps.
\begin{enumerate}[leftmargin=*]
    \item[i)] Extrapolation step: the approximate solution 
$(\tilde{\charge[]},\tilde{\flux[]},\tilde{\npot{}},  {\tjcurrv[V]}, \tilde \lambda)$
is extrapolated into the  current time window $[t_n,t_{n+1}]$. 
     This defines initial waveforms (approximate solutions)
     $(\charge[]^{(0)}, \flux[]^{(0)}, \npot{}^{(0)}, {\jcurrv[V]}^{(0)}, \lambda^{(0)})$ 
    on $[t_n,t_{n+1}]$ 
     for the following iteration process.
    \item[ii)] Iteration step
    for $l=0,\dotsc,l_{\max}$:
    \begin{enumerate}[leftmargin=*]
        \item[--] The first $\numsys-1$ DAE-IVP subsystems
        (where the constituents are given in~\eqref{basis.dynit.network.1tok-1})
        are solved separately as with respect to the variables 
        \[
            (\charge[i],\flux[i], \npot[i],\jcurrv[V_i]):=
         ({\charge[i]}^{(l+1)},{\flux[i]}^{(l+1)}, {\npot[i]}^{(l+1)},{\jcurrv[V_i]}^{(l+1)}).
         \]
         Thereby the input of the $i$th subsystem is the coupling current 
         $\lambda^{(l)}$; this quantity is given from the previous iteration, i.e., we have $\hat u_i:=-\lambda^{(l)}$. 
      %
\color{black}
 In principle, this could be done in parallel, since these subsystems are decoupled.
\item[--]
The last system (number $\numsys$) can be computed in two different ways:
\begin{itemize}
\item [a)] Jacobi-type approach: 
here one solves
the DAE-IVP~\eqref{basis.dynit.network.k}
with respect to the following variables
\[
    ({\charge[\numsys]},{\flux[\numsys]},{\npot[\numsys]}, {\jcurrv[V_\numsys]},\lambda):=({\charge[\numsys]}^{(l+1)},{\flux[\numsys]}^{(l+1)},{\npot[\numsys]}^{(l+1)}, {\jcurrv[V_\numsys]}^{(l+1)},\lambda^{(l+1)}).
\]
Thereby the input is given by the coupling vertex potentials $\npot[1]^{(l)},\ldots,\npot[\numsys-1]^{(l)}$ from the previous iteration, 
i.e., 
$\hat u_k:=\sum_{i=1}^{\numsys-1} \Atop[\lambda_i]^{\tr} \npot[i]^{(l)}$.
%
In this case, the calculation of the last system could be performed in parallel with the computation of the first $\numsys-1$ systems.

\item [b)] Gauss-Seidel-type approach: 
The only difference to the Jacobi-type approach above is 
the assignment of the input. Here we employ as input the 
coupling vertex potentials $\npot[1]^{(l+1)},\ldots,\npot[\numsys-1]^{(l+1)}$ from the current iteration
instead of the previous one, i.e., we set $\hat u_k:=\sum_{i=1}^{\numsys-1} \Atop[\lambda_i]^{\tr} \npot[i]^{(l+1)}$.
\color{black}
\end{itemize}
\end{enumerate}
    \end{enumerate}

\begin{remark} Notice that
this 
iteration process 
is based on the perspective (C2) for the first $\numsys-1$ subsystems and perspective (C3) for the last subsystem.
\myendofenv
\end{remark}

In the following, 
we will see that the $\numsys$ different subsystems in the dynamic iteration scheme can be interpreted as port-Hamiltonian systems, too. 

\section{Port-Hamiltonian formulation of coupled DAE circuit equations ---  the dynamic iteration perspective}\label{sec:pHsys}

We study the Jacobi approach and the Gauss-Seidel method
for a number of $\numsys$ coupled DAEs.  
To cope with port-Hamiltonian systems arising 
in this context, \color{black}
we have to modify slightly the interconnections.
This is treated in the first part. Secondly, we map the formulation to the electric circuit case. 

\subsection{The dynamic iteration PH-DAE setup}

We give a modified version of Definition~\ref{def:multi-PH-network-DAE}
for the dynamic iteration context. Thereby, we have to introduce the iteration count $l$ and the interconnection needs to map 
outputs of the last iterate to inputs of the current iterate:

\begin{definition}[Multiply coupled PH-DAE---the dynamic iteration perspective]\label{def:multiply-PH-network-dyniter}
We consider  the complete  Definition~\ref{def:multi-PH-network-DAE}
 (multiply coupled PH-DAE with $\numsys$ subsystems)
  apart from the assumption that $\hat{C}$ is skew symmetric. 
We add the iteration count:
the state variables $x_i$, inputs $u_i$ and outputs $y_i$ in \eqref{phdae.coupled.single.network} are labelled with an iteration number $l+1$: $x^{(l+1)}_i,u^{(l+1)}_i,y^{(l+1)}_i$. 
In the case of a Jacobi-type iteration, the $i$th subsystem reads (for $i=1,\dotsc \numsys$) \begin{subequations}\label{phdae.coupled.subsystem.dyniter}
\begin{align}
\ddt E_i  x_i^{(l+1)} 
    =& J_i  z_i^{(l+1)} -r_i\bigl(z_i^{(l+1)}\bigr) + 
    \begin{pmatrix}
    \hat B_i & \bar B_i
    \end{pmatrix}
     \begin{pmatrix} \hat{u}_i^{(l+1)} \\ \bar{u}_i^{(l+1)}  \end{pmatrix} 
    \\
\begin{pmatrix} \hat{y}_i^{(l+1)} \\ \bar{y}_i^{(l+1)}  \end{pmatrix} 
     = &   
    \begin{pmatrix}
    \hat B_i & \bar B_i
    \end{pmatrix}^\tr z_i^{(l+1)}  
     \\
     \intertext{together with the shorthand $z^{(l+1)}= z\left(x_i^{(l+1)}\right)$ 
     and the input (of $i$th subsystem) in the current iteration $(l+1)$ is linked to the output of the previous iteration~$(l)$ by
     }
    0 =& \hat u_i^{(l+1)} + \sum_{j=1,j \neq i}^{\numsys} \hat C_{i,j} \hat y^{(l)}_{j} . \tag{\ref{phdae.coupled.subsystem.dyniter}c-Jacobi}
    \label{eq:coupling-multi-PH-network-dyniter}
 \end{align}
\end{subequations}
\color{black} 
%
%
And we require the Schur complement $\hat{B} \hat{C} \hat{B}^{\tr}$ (of the interconnect matrix $\hat{C}$) to be  skew symmetric.
For the case of a Gauss-Seidel type iteration, only \eqref{eq:coupling-multi-PH-network-dyniter} is replaced 
\color{black} by
\begin{align}
    0 =& \hat u_i^{(l+1)} 
        + \sum_{j=1}^{i-1} \hat C_{i,j} 
            \hat y^{(l+1)}_{j}
        + \sum_{j=i+1}^{\numsys} \hat C_{i,j} \hat y^{(l)}_{j}
    . \tag{\ref{phdae.coupled.subsystem.dyniter}c-GS}
    \label{eq:coupling-multi-PH-network-dyniter-GS}
 \end{align}
\color{black}
\end{definition}

\begin{remark}
In contrast to Definition~\ref{def:multi-PH-network-DAE}, we do not require the interconnection matrix $\hat C$ in~(\ref{phdae.coupled.subsystem.dyniter}c) to be skew-symmetric in the overall.  We only require $\hat{B} (\hat{C}+\hat{C}^{\tr}) \hat{B}^{\tr}=0.$
\if 0
\color{magenta}
iii) The $i$th subsystems in Definiton~\ref{def:multiply-PH-network-dyniter} reads:
\begin{equation*}
\begin{aligned}
\ddt E_i  x_i^{(l+1)} 
    =& J_i  z_i^{(l+1)} -r_i\bigl(z_i^{(l+1)}\bigr) + B_i \begin{pmatrix} \hat{u}_i^{(l+1)} \\ \bar{u}_i^{(l+1)}  \end{pmatrix} 
    \\
\begin{pmatrix} \hat{y}_i^{(l+1)} \\ \bar{y}_i^{(l+1)}  \end{pmatrix} 
     = &   B_i^\tr z_i^{(l+1)}  \\
    0 =& \hat u_i^{(l+1)} + \sum_{j=1, j \neq i} \hat C_{i,j} \hat y^{(l)}_{j} 
 \end{aligned}
\end{equation*}
together with the shorthand $z^{(l+1)}= z\left(x_i^{(l+1)}\right)$.
\color{black}
\fi 
\myendofenv
 \end{remark}

Now, we have the analogous result to  Corollary \ref{cor.structure.preserving.interconnection.network}:

\begin{corollary}[Multiply skew-symmetric structure-preserving interconnection, Jacobi approach\color{black}]
\label{cor.structure.preserving.interconnection.network.dyniter}
\mbox{}
In the case of dynamic iteration, the assumption of Jacobi-type coupling~\eqref{eq:coupling-multi-PH-network-dyniter} gives
\begin{equation}
\label{eq:phdae.coupled.joint.network.dynit}
\begin{aligned}
\ddt 
\begin{pmatrix}
E & 0 & 0 \\
0 & 0 & 0 \\
0 & 0 & 0
\end{pmatrix}\!\!
\begin{pmatrix}
 x^{(l+1)} \\ {\hat u}^{(l+1)} \\ { \hat y}^{(l+1)}
\end{pmatrix}
 = &
\begin{pmatrix}
J & \hat{B} & 0 \\
- \hat{B}^{\tr} & 0 & I \\
0 & -I &  0
\end{pmatrix}\!\!
\begin{pmatrix}
 z(x^{(l+1)}) \\  \hat u^{(l+1)} \\  \hat y^{(l+1)}
\end{pmatrix}
\!-\!
\begin{pmatrix}
r(z(x^{(l+1)})) \\ 0 \\ 0
\end{pmatrix} \\
& 
+ \begin{pmatrix}
\bar B & 0  \\ 0 & 0 \\ 0 & - \hat C
\end{pmatrix}
\begin{pmatrix}
\bar{u}^{(l+1)} \\ \hat y^{(l)}
\end{pmatrix},
\\
\bar y^{(l+1)}  = &
\begin{pmatrix}
\bar{B}^{\tr} &0 &0 \\
0 & 0 & -\hat C^{\tr}
\end{pmatrix}
\begin{pmatrix}
z(x^{(l+1)}) \\ \hat{u}^{(l+1)} \\ \hat{y}^{(l+1)}
\end{pmatrix} \!. 
\end{aligned}
\end{equation}
which is a PH-DAE 
\begin{align}\label{eq:overall_dyniter_PH}
 \ddt E^{\tot} x^{\tot} &=  J^{\tot} z^{\tot} - r^{\tot}(z^{\tot}) + B^{\tot} u^{\tot}, \\
 \nonumber
 y^{\tot} &= B^{\tot \color{black}\tr} z^{\tot}
\end{align}
with
\begin{align*}
    &\!
    x^{\tot}
        = \begin{pmatrix}
                    x^{(l+1)} \\  \hat u^{(l+1)} \\  \hat y^{(l+1)}
          \end{pmatrix}\!, \;
    z^{\tot} = \begin{pmatrix}  z(x^{(l+1)}) \\  \hat u^{(l+1)} \\  \hat y^{(l+1)}
                              \end{pmatrix}\!\!, \;
    y^{\tot} = \bar{y}^{(l+1)}\!, 
        \;\;\;
    r^{\tot}(z^{\tot}) = \begin{pmatrix} r(z(x^{(l+1)})) \\ 0 \\ 0 \end{pmatrix}\!, 
\\
    &\!
    u^{\tot}
        = \begin{pmatrix} \bar u^{(l+1)} \\ \hat y^{(l)}\end{pmatrix}\!, 
        \;
     E^{\tot}= \!\begin{pmatrix}
            E & 0  & 0\\
            0 & 0 & 0 \\
            0 & 0 & 0
        \end{pmatrix}\!, 
        \;
    J^{\tot}= \!\begin{pmatrix}
            J & \hat{B} & 0 \\
            \!-\hat{B}^{\!\tr\!} & 0 & I \\
            0 & -I &  0
        \end{pmatrix}\!, 
        \;
    B^{\tot}= \!\begin{pmatrix}
                \bar B &0 \\
                0 &0 \\
                0 & -\hat C
            \end{pmatrix}\!.
\end{align*}
For the Gauss-Seidel coupling~\eqref{eq:coupling-multi-PH-network-dyniter-GS}, a  PH-DAE~\eqref{eq:overall_dyniter_PH} can be formulated  with 
\begin{align*}
 J^{\tot}= \!\begin{pmatrix}
            J & \hat{B} & 0 \\
            \!-\hat{B}^{\!\tr\!} & 0 & I \\
            0 & -I &  -\hat C
        \end{pmatrix}\!, 
        \;
    B^{\tot}= \!\begin{pmatrix}
                \bar B &0 \\
                0 &0 \\
                0 & \hat C_1
            \end{pmatrix}\!,
            \;
  u^{\tot}
        = \begin{pmatrix} \bar u^{(l+1)} \\  \Delta \hat y^{(l+1)}\end{pmatrix}\!           
\end{align*}
instead of $J^{\tot}$,  $B^{\tot}$ and $u^{\tot}$ above,
provided that $\hat C$ is skew-symmetric.
Here we have used the short-hand $\hat u^{(l+1)} + C_1 \hat y^{(l)} + C_2 \hat y^{(l+1)}=0$.
\end{corollary}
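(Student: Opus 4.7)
The plan is to follow the same pattern as Corollary \ref{cor.structure.preserving.interconnection.network}, with the crucial distinction that coupling signals coming from the previous iterate are moved from the feedback part into the external input of the joint system. I would first establish the Jacobi statement and then reduce the Gauss--Seidel statement to it by a rewrite that splits $\hat C$ into upper and lower triangular parts.

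For the Jacobi case, I would start by aggregating the $\numsys$ subsystem equations \eqref{phdae.coupled.subsystem.dyniter} block-diagonally, setting $E=\diag(E_i)$, $J=\diag(J_i)$, $\hat B=\diag(\hat B_i)$, $\bar B=\diag(\bar B_i)$, and stacking $z$ and $r$ accordingly. Next, as in the extension \eqref{eq:ext-ph-DAE-representation}, I would adjoin $\hat u^{(l+1)}$ and $\hat y^{(l+1)}$ as auxiliary algebraic variables; the relation $\hat y^{(l+1)}=\hat B^{\tr} z^{(l+1)}$ becomes the second block row and the coupling $\hat u^{(l+1)}=-\hat C\hat y^{(l)}$ becomes the third block row, with $\hat y^{(l)}$ entering solely through the external port via $B^{\tot}$. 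This recovers \eqref{eq:phdae.coupled.joint.network.dynit} verbatim.

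I would then verify the PH-DAE axioms of Definition \ref{def:PH-network-DAE}: the structure matrix $J^{\tot}$ is skew-symmetric on $\mathcal{V}^{\tot}:=\mathcal{V}_1\times\dotsb\times\mathcal{V}_\numsys\times\real^{n_{\hat u}}\times\real^{n_{\hat y}}$ by direct inspection of its block pattern---note that in the Jacobi case $\hat C$ appears only inside $B^{\tot}$, so no symmetry condition on $\hat C$ is required here; the map $r^{\tot}$ is accretive on $\mathcal{V}^{\tot}$, being a concatenation of the accretive $r_i$ with zero blocks; and the Hamiltonian $H^{\tot}(x^{\tot})=\sum_{i=1}^{\numsys}H_i(x_i)$ satisfies $\nabla H^{\tot}=(E^{\tot})^{\tr}z^{\tot}$ because the coupling components of $E^{\tot}$ vanish identically.

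For the Gauss--Seidel case, the key algebraic step is to decompose $\hat C=C_1+C_2$, where $C_1$ is the strictly block-upper-triangular part multiplying previous-iterate signals $\hat y^{(l)}$ and $C_2$ the strictly block-lower-triangular part multiplying current-iterate signals $\hat y^{(l+1)}$. Then \eqref{eq:coupling-multi-PH-network-dyniter-GS} rewrites as
\[
\hat u^{(l+1)}+\hat C\,\hat y^{(l+1)}=C_1\bigl(\hat y^{(l+1)}-\hat y^{(l)}\bigr)=C_1\,\Delta\hat y^{(l+1)},
\]
so that the $C_2\hat y^{(l+1)}$ contribution is absorbed into the $(3,3)$-block of $J^{\tot}$ (producing the announced $-\hat C$ entry), while $C_1\,\Delta\hat y^{(l+1)}$ plays the role of the new coupling input. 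The remainder of the verification is identical to the Jacobi case; the only additional requirement is that the $(3,3)$-block $-\hat C$ be antisymmetric, which is precisely the standing hypothesis $\hat C^{\tr}=-\hat C$. I expect the main obstacle to be the Gauss--Seidel reformulation, specifically checking that splitting $\hat C$ and absorbing its lower-triangular part into $J^{\tot}$ leaves an honest PH-DAE structure, with the off-diagonal $\hat B$/$-\hat B^{\tr}$ and $I$/$-I$ blocks compatible with the new $(3,3)$-block; granted the skew-symmetry of $\hat C$, this ultimately reduces to a direct block-matrix inspection analogous to the Jacobi verification.
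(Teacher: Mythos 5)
Your proposal is correct and follows essentially the same route the paper intends: the corollary is obtained exactly as Corollary~\ref{cor.structure.preserving.interconnection.network} (extension by the dummy variables $\hat u^{(l+1)}$, $\hat y^{(l+1)}$, block-diagonal aggregation, and direct verification of skew-symmetry, accretivity and the Hamiltonian gradient condition), with the previous-iterate signal $\hat y^{(l)}$ moved to the external port. Your Gauss--Seidel reduction via $\hat C=C_1+C_2$ and the identity $\hat u^{(l+1)}+\hat C\hat y^{(l+1)}=\hat C_1\Delta\hat y^{(l+1)}$ is precisely the splitting the paper encodes in its short-hand $\hat u^{(l+1)}+C_1\hat y^{(l)}+C_2\hat y^{(l+1)}=0$ and Remark~\ref{rem.change.ham.coupled.network.dyniter}, including the correct observation that skew-symmetry of $\hat C$ is only needed there because $-\hat C$ enters the $(3,3)$-block of $J^{\tot}$.
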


\begin{remark}
\label{rem.change.ham.coupled.network.dyniter}
\begin{enumerate}[label=\roman*)]
\item 
The only difference to the setting of Corollary~\ref{cor.structure.preserving.interconnection.network} is the following: $\hat y^{(l)}$ defines a new input variable, and correspondingly, the coupling matrix $\hat C$  (Jacobi-type approach) and $\hat C_1$ (Gauss-Seidel type approach), resp.,  is shifted from the structure matrix $J^{\tot}$ to the port matrix $B^{\tot}$. 

\item In the dynamic iteration case~\eqref{eq:phdae.coupled.joint.network.dynit}, 
 Jacobi-type approach, 
the system can be condensed to
%
\begin{subequations}
\label{eq:phdae.coupled.joint.network.condensed.dynit.v3}
\begin{align}
\ddt E x^{(l+1)} & 
    =  \hat J z(x^{(l+1)})  - r(z(x^{(l+1))})  
        + \hat B \hat C \hat B^{\tr} \Delta z^{(l+1)}
        + \bar B \bar u^{(l+1)},
    \\
\bar{\bar{y}}^{(l+1)}  & 
    =  \left(\hat B \hat C \hat B^{\tr}\right)^{\tr} z(x^{(l+1)})
        \; =\;  -  \left(\hat B \hat C \hat B^{\tr}\right) z(x^{(l+1)}),
        \label{eq:phdae.coupled.joint.network.condensed.dynit-b-v3} 
    \\
\bar y^{(l+1)} & 
    = \bar B^{\tr} z(x^{(l+1)})
\end{align}
\end{subequations}
with $\hat{J}= J-\hat{B} \hat{C} \hat{B}^{\tr}$
and with  an extra 
output $\bar{\bar{y}}^{(l+1)}= -\color{black} \hat{B}\hat{C}\hat{B}^{\tr}  \hat{y}^{(l+1)} $.
Moreover, we note that $(\bar{\bar{u}}^{(l+1)}:=)\, \Delta z^{(l+1)} = z^{(l+1)} -z^{(l)} $ 
\color{black}
is the dynamic iteration update and it takes the role of an extra input. 
Note that in the Gauss-Seidel-type approach, the same PH-DAE~\eqref{eq:phdae.coupled.joint.network.condensed.dynit.v3}
holds, with $\hat B \hat C \hat B^{\tr}$ replaced by $\hat B \hat C_1 \hat B^{\tr}$.%
%
%

\item 
     Here,   the change in the Hamiltonian is given by
    \begin{equation}\label{eq:hamiltonian-dyniter}
\begin{aligned}
& - \int_t^{t+h} z(x^{(l+1)}(\tau))^{\tr} r\left(z(x^{(l+1)}(\tau)\right) \, d\tau + \\
& \int_t^{t+h} \left(  {\bar{u}^{(l+1)}(\tau)}^{\tr}
\bar B^{\tr}
z(x^{(l+1)}(\tau))  - (\Delta z(x^{(l+1)}(\tau))^{\tr} \hat B \hat C \hat B^{\tr} z(x^{(l)}(\tau))
 \right) \, d\tau.
 \end{aligned}
\end{equation}
We point out that the third term in~\eqref{eq:hamiltonian-dyniter}, which is additional to the first two terms already known from~\eqref{change.hamiltonian.condensed}, decays with the converging dynamic iteration.
\item For the use of a Gauss-Seidel iteration in Corollary~\ref{cor.structure.preserving.interconnection.network.dyniter}, the input $\hat{y}^{(l)}$ needs to be split into old and new iterates. 
%
%
\myendofenv
\end{enumerate}
\end{remark}

\subsection{The dynamic iteration perspective for multiply coupled electric circuits}
%
%
We study a number of $\numsys$ coupled  circuits, which were given in charge oriented form in~\eqref{CoupledCircuits}. 
In the perspective of $\numsys$ copies of the PH-DAE model from  Proposition~\ref{prop:hamiltonian-network}, the respective constituents are already given in the proofs of Lemma~\ref{lem:PH-DAE_mutally_coupled_networks} (in~\eqref{basis.dynit.network.1tok-1} for the systems $1,\,\dotsc,\,\numsys-1$, and of~\eqref{basis.dynit.network.k} for system $\numsys$).
Now, we include  
\color{black}
the dynamic iteration process. 
First, in the $l+1$-st iteration, say, we solve (successively or in parallel)
the subsystems $i=1,\dotsc, \numsys-1$. 
These subsystems read in the PH-DAE notation (cf.~Corollary~\ref{cor.structure.preserving.interconnection.network.dyniter}) for both the Jacobi and the Gauss-Seidel approach as follows: 
\begin{subequations}\label{eq:network-dynit.1tok-1}
 \renewcommand{\arraystretch}{1.05}
 \begin{align}
    \nonumber
    \ddt &\begin{pmatrix}
     \Atop[C_i] & 0 & 0 & 0 \\
        0 & I & 0 & 0 \\
        0 & 0 & 0 & 0 \\
        0 & 0 & 0 & 0 \\
    \end{pmatrix}
    \!\!
    \begin{pmatrix}
    \charge[_i]^{(l+1)} \\ \flux[_i]^{(l+1)} \\ \npot{}_i^{(l+1)} \\ \jcurrv[V_i]^{(l+1)}    \end{pmatrix}
     =
    \begin{pmatrix}
         0 & -\Atop[L_i] & 0 & -\Atop[V_i] \\
         \Atop[L_i]^{\tr} & 0 & 0 & 0 \\
         0 & 0 & 0 & 0 \\
        \Atop[V_i]^{\tr} & 0 & 0 & 0 \\
    \end{pmatrix}
    \begin{pmatrix}
    \npot{}_i^{(l+1)} \\  \jcurrv[L_i]^{(l+1)} \\ u_{C_i}^{(l+1)} \\  \jcurrv[V_i]^{(l+1)}
    \end{pmatrix} 
    \\ 
    & 
    \qquad 
    \color{magenta} - \color{black}
   \begin{pmatrix}
   \Atop[R_i]g_i(\Atop[R_i]^{\tr} \npot{}_i^{(l+1)})\\          0\\ 
        \Atop[C_i]^\tr \npot{}_i^{(l+1)} - u_{C_i}^{(l+1)} \\
            0
    \end{pmatrix}
    + 
    \begin{pmatrix}
     \Atop[\lambda_i] \\ 0 \\ 0 \\ 0
    \end{pmatrix}
    {\hat u_i} 
    + \begin{pmatrix} 
    -\Atop[I_i] & 0  \\ 0 & 0 \\ 0 & 0 \\ 0 & - I
    \end{pmatrix}
    \!\!\begin{pmatrix}
    \csrc[i]({\tim}) \\ \vsrc[i]({\tim})
    \end{pmatrix}\! ,
    \\
    & \quad \hat y_i^{(l+1)}  =
    \begin{pmatrix}
     \Atop[\lambda_i] \\ 0  \\ 0 \\ 0
    \end{pmatrix}^{\tr}
    \begin{pmatrix}
    \npot{}_i^{(l+1)} \\  \jcurrv[L_i]^{(l+1)}\\ 
    u_{C_k}^{(l+1)} \\ \jcurrv[V_i]^{(l+1)}
    \end{pmatrix} ,
    \qquad
    \qquad \bar y_i  =  \begin{pmatrix}
    -\Atop[I_i] & 0  \\ 0 & 0 \\  0 & 0 \\ 0 & - I_i
    \end{pmatrix}^{\tr}
    \begin{pmatrix}
    \npot{}_i^{(l+1)} \\  \jcurrv[L_i]^{(l+1)}\\ 
    u_{C_k}^{(l+1)} \\ \jcurrv[V_i]^{(l+1)}
    \end{pmatrix},
\end{align}
\end{subequations}
where the inputs are connected to the output of the last system (number $\numsys$) from the previous iteration step $(l)$:
(for both approaches)
\begin{equation}
\label{input.output.jacobi+gauss-seidel}
\hat{u}_i=\hat{u}_i^{(l)} =-\hat y_k^{(l)}, \qquad i=1,\dotsc, \numsys-1
\end{equation}
(and it is used within $z_i$:   $\jcurrv[L_i]^{(l+1)} =\phi^{-1}_{i} (\phi_{L,i}^{(l+1)})$, $u_{C,i}^{(l+1)} =q^{-1}_{i} (q_{C,i}^{(l+1)})$). 
Finally, the $\numsys$-th subsystem (last) 
reads for both approaches
\begin{subequations}\label{eq:network-dynit.k}
\begin{align}
    \nonumber
    \ddt& \begin{pmatrix}
     \Atop[C_k] & 0 & 0 & 0 & 0 \\
        0 & I & 0 & 0 & 0\\
        0 & 0 & 0 & 0 & 0\\
        0 & 0 & 0 & 0 & 0\\
         0 & 0 & 0 & 0 & 0
    \end{pmatrix}
    \!\!
    \begin{pmatrix}
    \charge[\numsys]^{(l+1)} \\ 
    \flux[\numsys]^{(l+1)} \\ 
    \npot{}_{\numsys}^{(l+1)} \\ 
    \jcurrv[V_\numsys]^{(l+1)} \\
    \lambdav_\numsys^{(l+1)}
    \end{pmatrix}
    =
    \begin{pmatrix}
         0 & -\Atop[L_k] & 0 & -\Atop[V_k] & -\Atop[\lambda_k]\\
         \Atop[L_k]^{\tr} & 0 & 0 & 0 & 0\\
         0 & 0 & 0 & 0 & 0\\
        \Atop[V_k]^{\tr} & 0 & 0 & 0 & 0\\
        \Atop[\lambda_k]^{\tr} & 0 & 0 & 0 & 0
    \end{pmatrix}
    \begin{pmatrix}
    \npot{}_{\numsys}^{(l+1)} \\  
    \jcurrv[L_\numsys]^{(l+1)} \\ 
    u_{C_k}^{(l+1)} \\  
    \jcurrv[V_k]^{(l+1)} \\
     \lambdav_k^{(l+1)}
    \end{pmatrix} 
    \\ 
    & 
    \qquad 
    \color{magenta} - \color{black}
   \begin{pmatrix}
    \Atop[R_k]g_k( \Atop[R_k]^{\tr}
            \npot{}_k^{(l+1)} )\\
                0\\ 
        \Atop[C_k]^\tr \npot{}_k^{(l+1)} 
        - u_{C_k}^{(l+1)} \\
                0 \\
                0
    \end{pmatrix}
    + \begin{pmatrix}
    0 \\ 0 \\ 0 \\ 0 \\ I
    \end{pmatrix}
    {\hat u_k} 
    + \begin{pmatrix} 
    -\Atop[I_k] & 0  \\ 0 & 0 \\ 0 & 0 \\ 0 & - I \\ 0 & 0 
    \end{pmatrix}
    \!\!\begin{pmatrix}
    \csrc[k]({\tim}) \\ \vsrc[k]({\tim})
    \end{pmatrix}\! ,
     \\
     & \;\,
     \hat y_\numsys^{(l+1)}  =
    \begin{pmatrix}
     0 \\ 0 \\  0 \\  0 \\ I
    \end{pmatrix}^{\tr}
    \begin{pmatrix}
        \npot{}_\numsys^{(l+1)} \\  
        \jcurrv[L_\numsys]^{(l+1)} \\  
        u_{C_k}^{(l+1)} \\ 
        \jcurrv[V_\numsys]^{(l+1)} \\
        \lambdav^{(l+1)}
    \end{pmatrix}\!,
    \qquad\qquad
    \bar y_\numsys  =
    \begin{pmatrix}
        -\Atop[I_\numsys] & 0  \\ 0 & 0 \\ 0 & 0  \\ 0 & - I_\numsys  \\ 0 & 0
    \end{pmatrix}^{\tr}
    \begin{pmatrix}
        \npot{}_\numsys^{(l+1)} \\  
        \jcurrv[L_\numsys]^{(l+1)} \\  
        u_{C_k}^{(l+1)} \\ 
        \jcurrv[V_\numsys]^{(l+1)} \\
        \lambdav^{(l+1)}
    \end{pmatrix}.   
\end{align}
\end{subequations}
Only, the relation of outputs and inputs differs:
for the Jacobi case, we have a relation to the previous iterates:
\refstepcounter{equation}\label{eq:input.output.k-set}
\begin{equation}
\label{input.output.k.jacobi}
  \hat{u}_k=\hat{u}_k^{(l)} \color{black}
     =\sum_{i=1}^{k-1} \hat y_i^{(l)};
     \tag{\ref{eq:input.output.k-set}-Jacobi}
\end{equation}
and in the Gauss-Seidel case, the current iterates need to be used:
\begin{equation}
\label{input.output.k.gauss-seidel}
   \hat{u}_k=\hat{u}_k^{(l+1)} \color{black}
   =\sum_{i=1}^{k-1} \hat y_i^{(l+1)}.
     \tag{\ref{eq:input.output.k-set}-GS}
\end{equation}

In both cases, after aggregation, the $\numsys$ subsystems can be written as a joint PH-circuit-DAE system.

\begin{mylemma}\label{lem:Jacobi-overall}
For the Jacobi  approach, system \eqref{eq:network-dynit.1tok-1}$+$\eqref{eq:network-dynit.k} with both input-output relation \eqref{input.output.jacobi+gauss-seidel}$+$\eqref{input.output.k.jacobi} is in the overall a PH-DAE of type~\eqref{eq:phdae.coupled.joint.network.dynit}.
\end{mylemma}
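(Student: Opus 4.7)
The plan is to assemble the overall system in three stages. First I would list the per-subsystem PH-data exactly as in the proof of Lemma~\ref{lem:PH-DAE_mutally_coupled_networks}: for $i=1,\dotsc,\numsys-1$ take $x_i,z_i,E_i,J_i,r_i,\hat B_i,\bar B_i$ from \eqref{basis.dynit.network.1tok-1}, and for $i=\numsys$ take them from \eqref{basis.dynit.network.k}. By Proposition~\ref{prop:hamiltonian-network} each subsystem is itself a PH-DAE with Hamiltonian $H_i = V_{C_i}(q_{C_i})+V_{L_i}(\phi_{L_i})$, subspace $\mathcal{V}_i=\{z_i\mid A_{C_i}^{\tr}e_i=u_{C_i}\}$, skew-symmetric $J_i$, and accretive $r_i$ on $\mathcal{V}_i$.

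Second, I would identify the block coupling matrix $\hat C$ associated with the Jacobi assignment \eqref{input.output.jacobi+gauss-seidel}+\eqref{input.output.k.jacobi}. Comparing with the convention $\hat u_i+\sum_{j\neq i}\hat C_{i,j}\hat y_j=0$ of Definition~\ref{def:multi-PH-network-DAE}, one reads off $\hat C_{i,\numsys}=I$ for $i<\numsys$, $\hat C_{\numsys,j}=-I$ for $j<\numsys$, and $\hat C_{i,j}=0$ otherwise. A direct block computation shows that $\hat C^{\tr}=-\hat C$, so in particular the weaker condition $\hat B(\hat C+\hat C^{\tr})\hat B^{\tr}=0$ required by Definition~\ref{def:multiply-PH-network-dyniter} holds; thus the Jacobi iteration scheme \eqref{eq:network-dynit.1tok-1}+\eqref{eq:network-dynit.k} fits that definition.

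Third, I would apply Corollary~\ref{cor.structure.preserving.interconnection.network.dyniter}. Using the block-diagonal aggregations $E=\diag(E_1,\dotsc,E_\numsys)$, $J=\diag(J_1,\dotsc,J_\numsys)$, $\hat B=\diag(\hat B_1,\dotsc,\hat B_\numsys)$, $\bar B=\diag(\bar B_1,\dotsc,\bar B_\numsys)$, and the stacked vectors $x,\hat u,\hat y,\bar u,\bar y,z(x),r(z(x))$, the $\numsys$ subsystems together with the Jacobi update \eqref{input.output.jacobi+gauss-seidel}+\eqref{input.output.k.jacobi} take precisely the block form \eqref{eq:phdae.coupled.joint.network.dynit}: the first block row is the stacked dynamics of all subsystems, the second collects $-\hat B^{\tr}z+\hat u=0$ (which is the identity $\hat y^{(l+1)}=\hat B^{\tr}z^{(l+1)}$), the third is $-\hat y^{(l+1)}+\hat u^{(l+1)} + (-\hat C)\hat y^{(l)}=0$ written as a source term $-\hat C\hat y^{(l)}$ on the input side, and the external output is $\bar y^{(l+1)}=\bar B^{\tr}z^{(l+1)}$. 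The total Hamiltonian is $H=\sum_{i=1}^{\numsys}H_i$.

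The resulting $J^{\mathrm{tot}}$ is skew-symmetric by inspection (the $\hat C$ has been moved into $B^{\mathrm{tot}}$ and therefore plays no role in skew-symmetry), and accretivity of $r^{\mathrm{tot}}$ on $\mathcal{V}=\mathcal{V}_1\times\dotsb\times\mathcal{V}_\numsys\times\real^{kn_\lambda}\times\real^{kn_\lambda}$ as well as the gradient identity $\nabla H=(E^{\mathrm{tot}})^{\tr}z^{\mathrm{tot}}$ follow componentwise from the corresponding properties of the subsystems. The main, and really the only, point requiring attention is verifying that the Jacobi coupling $\hat C$ with the extra source term $-\hat C\hat y^{(l)}$ produces the exact row distribution of $\hat B_i$'s and $\Atop[\lambda_i]$'s demanded by \eqref{eq:network-dynit.1tok-1} and \eqref{eq:network-dynit.k}; this is a bookkeeping exercise on the block indices and follows from the explicit form of $\hat C$ derived above. \qed
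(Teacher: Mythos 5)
Your proposal is correct and follows essentially the same route as the paper: identify the aggregated constituents ($E$, $J$, $\hat B$, $\bar B$, $r$, the stacked states and ports) together with the explicit Jacobi coupling matrix $\hat C$ (identities $\hat C_{i,\numsys}=I$, $\hat C_{\numsys,j}=-I$, zero otherwise) and then invoke Corollary~\ref{cor.structure.preserving.interconnection.network.dyniter}. The only cosmetic difference is that you keep the subsystem-ordered block-diagonal aggregation, whereas the paper writes the same matrices after permuting the state by component type (hence its $\hat{\Atop[\lambda]}=\blkdiag(\Atop[\lambda_1],\dotsc,\Atop[\lambda_{\numsys-1}],0)$, $F=(0,\dotsc,0,I)$ and $\tilde{\Atop[\lambda]}$), which is equivalent up to that permutation.
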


\begin{proof}
 We identify via aggregation the terms in~\eqref{eq:phdae.coupled.joint.network.dynit}:
 \begin{align*}
    & x^{(l+1)} = \begin{pmatrix}
    \charge[]^{(l+1)} \\ \flux[]^{(l+1)} \\\npot{}^{(l+1)} \\    \jcurrv[V]^{(l+1)} \\ \lambdav^{(l+1)}
    \end{pmatrix} 
    \!, \quad 
    \bar{u}^{(l+1)} =  \begin{pmatrix}
    \csrc({\tim}) \\ \vsrc({\tim})
    \end{pmatrix} 
    \!, 
    \quad
    \hat{u}=\hat{u}^{(l)} = \begin{pmatrix} \hat{u}_1^{(l)} \\
    \vdots \\ \hat{u}_k^{(l)}\end{pmatrix}
    %
    \! , 
    \quad
  \hat{y}^{(l+1)}
        = \begin{pmatrix} \hat{y}_1^{(l+1)} \\
            \vdots \\ \hat{y}_k^{(l+1)}
            \end{pmatrix} 
            \! , 
    \\
    &
    E=  \begin{pmatrix} \Atop[C]  & 0 & 0& 0 &0 \\
        0 & I & 0 & 0 & 0 \\
        0 & 0 & 0 & 0 & 0\\
        0 & 0 & 0 & 0 & 0 \\
        0 & 0 & 0 & 0 & 0
    \end{pmatrix}\!, 
    \quad 
    r
    (z^{(l+1)}) =  \begin{pmatrix}
        \Atop[R] \resi[R](\Atop[R]^{\tr} \npot{}^{(l+1)}) \\
            0 \\
            \Atop[C]^{\tr} \npot[] - u_C \\
            0 \\
            0 
    \end{pmatrix},
 \\
    &
    J = \begin{pmatrix}
        0 & - \Atop[L] & 0 & - \Atop[V] & - \tilde{\Atop[\lambda]}  \\
        \Atop[L]^{\tr} & 0 & 0 & 0 & 0 \\
        0 & 0 & 0 & 0 & 0 \\
        \Atop[V]^{\tr} & 0 & 0 & 0 & 0 \\
        \tilde{\Atop[\lambda]}^{\tr} & 0 & 0 & 0 & 0 
      \end{pmatrix}\!,
        \quad\!\!
        %
    \tilde{\Atop[\lambda]}
        =\begin{pmatrix} 0 \\ \vdots \\  0 \\ \Atop[\lambda_k]\end{pmatrix} 
        \! , 
  \quad
    \hat B =
    \begin{pmatrix}
        \hat{\Atop[\lambda]}  \\
          0 \\ 
          0 \\
         0 \\
         F
    \end{pmatrix} 
    \! , 
 \\
    &
    \hat{\Atop[_\lambda]} = \blkdiag \begin{pmatrix}
        \Atop[\lambda_1], \dotsc, \Atop[\lambda_{k-1}], 0
    \end{pmatrix}
    =
  \begin{pmatrix} \Atop[\lambda,1] \\
    & \Atop[\lambda, 2] \\
    & & \ddots \\
    & & & \Atop[\lambda, \numsys-1] \\
    & & & & 0\end{pmatrix} 
    \! , 
     \quad
    \\
&
    F = (0,\, \dotsc,\, 0, \, I)
         , \quad
   \bar B\!=\!\begin{pmatrix}
                 - \Atop[I] & 0 \\
                0 & 0 \\
                0 & 0 \\
                0 & -  I  \\
                0& 0
            \end{pmatrix}\!  
            ,
           \quad
   \hat{C}\!=\!\begin{pmatrix}
                0 & \cdots & 0 & I_{n_\lambda} \\
                \vdots & & \vdots & \vdots \\
             0 & \cdots &0 & I_{n_\lambda} \\
            -I_{n_\lambda} & \cdots & -I_{n_\lambda} & 0
            \end{pmatrix} \!
            ,
\end{align*}
where $F$ is split analogously to $\hat{\Atop[\lambda]}$.
%
\end{proof}

\begin{remark} 
i) Note that the change in the Hamiltonian according to~\eqref{eq:hamiltonian-dyniter} is given by
\begin{align}
&    
\int_t^{t+h} 
 \left(
 - \npot^{(l+1)}(\tau)^{\tr} \Atop[R] g(\Atop[R]^{\tr} \npot^{(l+1)}(\tau))
 - 
    \csrc(\tau)^{\tr} \Atop[I]^{\tr} \npot{}^{(l+1)}(\tau)- \vsrc(\tau)^{\tr} \jcurrv[V]^{(l+1)}(\tau)
      \right. +\nonumber \\
    & \qquad + \left. \sum_{i=1}^{\numsys-1} \left[
 {(\Delta {\npot{}_i}^{(l+1)})}^{\tr}
\Atop[\lambda_i] \lambdav^{(l+1)} -
{(\Delta \lambdav^{(l+1)})}^{\tr}
{(\Atop[\lambda_i])}^{\tr} {\npot{}_i}^{(l+1)}
    \right]
    \right) \, d\tau.
\end{align} 
ii) The Schur complement part for the condensed version, cf.~\eqref{eq:phdae.coupled.joint.network.condensed.dynit.v3},
reads: 
\[
    \hat B \hat C \hat B^{\tr} \!=\!
\begin{pmatrix}
0 & 0 & 0 & 0 & \bar{\Atop[\lambda]} \\
0 & 0 & 0 & 0 & 0  \\
0 & 0 & 0 & 0 & 0  \\
0 & 0 & 0 & 0 & 0  \\
-\bar{\Atop[\lambda]}^{\tr} & 0 & 0 & 0 & 0
    \end{pmatrix} \quad \text{ with }
    \quad
    \bar{\Atop[\lambda]}^{\!\!\tr} = ({\Atop[\lambda_1]}^{\!\!\tr},\,
    \dotsc,\, {\Atop[\lambda_{\numsys-1}]}^{\!\!\tr},\,  0).
\]

iii) We state explicitly the matrix $\hat{B}$ with dimensions:
  \renewcommand{\kbldelim}{(}
\renewcommand{\kbrdelim}{)}
\[
  \hat{B} =
\kbordermatrix{
        & n_\lambda & \cdots & n_\lambda & n_\lambda \\
    n_{e,1} & \Atop[\lambda,1] &  &  &  \\
    \vdots  &  & \ddots &  &  \\
    n_{e,\numsys-1} &  &        &  \Atop[\lambda,\numsys-1] &  \\
    n_{e,\numsys} &  &  &  & 0  \\[2ex]
    n_{L,1}    & 0 & \cdots & \cdots & 0 \\
    \vdots    & \vdots     & &  &\vdots \\
    n_{L,\numsys}    &  0 &  \cdots &  \cdots &  0 \\[2ex]
     n_{u_C,1}    & 0 & \cdots & \cdots & 0 \\
     \vdots    &  \vdots     & &  & \vdots \\
    n_{u_C,\numsys}    &  0 &  \cdots &  \cdots &  0 \\[2ex]
    n_{V,1}    & 0 & \cdots & \cdots& 0 \\
     \vdots   & \vdots     & &  &\vdots \\
    n_{V,\numsys} & 0 & \cdots & \cdots& 0 \\[2ex]
     n_\lambda   & 0 & \cdots & 0 & I
  }
\]
\myendofenv
\end{remark}

\begin{remark}
From remark~\ref{rem.change.ham.coupled.network.dyniter}(ii) we know that in the case of Gauss-Seidel type iteration, the condensed PH-DAE description reads 
\begin{align*}
    &\ddt E {x}^{(l+1)} = \hat J z^{(l+1)}
        -  r(z^{(l+1)}) + \hat B^{GS}  \Delta \lambda + \bar{B} \bar{u}
    \\
    & {\bar{\bar y}}^{(l+1)} = \hat{B}^{GS,\tr} z^{(l+1)} \quad \left(= -\Atop[\lambda,k]^{\tr} e_k^{(l+1)} \right)
    \\
    & \bar{y}^{(l+1)}= \bar{B}^{\tr} z^{(l+1)}
\end{align*}
where we have used 
\begin{align*}
  \hat B \hat C_1 \hat B^{\tr} \Delta z^{(l+1)}= 
  \underbrace{\begin{pmatrix}
  \bar{\Atop[\lambda]} \\ 0 \\ 0  \\ 0  \\ 0 
  \end{pmatrix}}_{\displaystyle \hat B^{GS}:=} \Delta \lambda^{(l+1)}.
\end{align*}

The error in the Hamiltonian is given by
\begin{align}
& -
\int_t^{t+h} \left( \npot[]^{(l+1}(\tau)^{\tr} \Atop[R] g(\Atop[R]^{\tr} \npot[]^{(l+1)}(\tau)) + \csrc[]^{\tr} (\tau) \Atop[I]^{\tr} x^{(l+1)}(\tau) +
\vsrc[]^{\tr}(\tau)\jcurrv[V]^{(l+1)}(\tau) +  \right. 
\nonumber \\
& + \left. (\Delta \lambda^{(l+1)}(\tau))^{\tr} \Atop[\lambda_k]^{\tr} \npot{}_\numsys^{(l+1)}(\tau)
\right)  \, d \tau.
\end{align}
\end{remark}

\if 0
\color{red}
\begin{align*}
    &\begin{pmatrix} \Cnet & 0 & 0& 0  \\
        0 & \Lnet & 0 & 0 \\
        0 & 0 & 0 & 0 \\
        0 & 0 & 0 & 0
    \end{pmatrix}
    \cdot
    \begin{pmatrix}
        \mydot{\npot{}}^{(l+1)} \\
        {\djcurrv[L]}^{(l+1)} \\
        {\djcurrv[V]}^{(l+1)} \\ \dot{\lambdav}^{(l+1)}
    \end{pmatrix}
    +
    \begin{pmatrix}
        \Atop[R] \resi[R](\Atop[R]^{\tr} \npot{}^{(l+1)},\tim) \\
            0 \\
            0 \\
            0
    \end{pmatrix}
    \\
    & =
    \begin{pmatrix}
        0 & - \Atop[L] & - \Atop[V] & - \Atop[\lambda]  \\
        \Atop[L]^{\tr} & 0 & 0 & 0 \\
        \Atop[V]^{\tr} & 0 & 0 & 0 \\
    \Atop[\lambda]^{\tr} & 0 & 0 & 0
    \end{pmatrix}
    \cdot
    \begin{pmatrix}
    \npot{}^{(l+1)} \\  \jcurrv[L]^{(l+1)} \\  \jcurrv[V]^{(l+1)} \\ \lambdav^{(l+1)}
    \end{pmatrix}
    \nonumber
    \\
     & \qquad\quad
     \begin{pmatrix}
    \bar{\Atop[\lambda]}\\ 0  \\ 0  \\ 0
    \end{pmatrix}
    \begin{pmatrix}
    \Delta \lambda
    \end{pmatrix}
    +
     \begin{pmatrix}
    \Atop[I] & 0 \\ 0 & 0 \\ 0 & - I \\ 0 & 0
    \end{pmatrix}
    \begin{pmatrix}
    \csrc({\tim}) \\ \vsrc({\tim})
    \end{pmatrix}, \\
    \hat y & = \begin{pmatrix}
    \bar{\Atop[\lambda]}\\ 0  \\ 0  \\ 0
    \end{pmatrix}^{\tr}
    \begin{pmatrix}
    \npot{}^{(l+1)} \\  \jcurrv[L]^{(l+1)} \\  \jcurrv[V]^{(l+1)} \\ \lambdav^{(l+1)}
    \end{pmatrix} =
    -\Atop[\lambda_k]^{\tr} \npot{}_\numsys^{(l+1)}, \\
    \bar y & =
    \begin{pmatrix}
    \Atop[I] & 0 \\ 0 & 0 \\ 0 & - I \\ 0 & 0
    \end{pmatrix}^{\tr}
    \begin{pmatrix}
    \npot{}^{(l+1)} \\  \jcurrv[L]^{(l+1)} \\  \jcurrv[V]^{(l+1)} \\ \lambdav^{(l+1)}
    \end{pmatrix},
\end{align*}
with 
the iteration error $\Delta \lambda$ 
as input. Note that the error in the Hamiltonian is given by
\[
\int_t^{t+H} \Delta \lambda^{\tr}(\tau) \sum_{i=1}^{\numsys-1}  \Atop[\lambda_i]^{\tr} \npot{}_i^{(l+1)}(\tau) \, d \tau =
- \int_t^{t+H} \Delta \lambda^{\tr}(\tau) \Atop[\lambda_k]^{\tr} \npot{}_\numsys^{(l+1)}(\tau) \, d \tau.
\]
\fi 


\section{Conclusions}

We have introduced several PH-DAE formulations, where all cases correspond to dedicated perspectives: overall systems, multiply coupled DAE systems, and  systems within a dynamic iteration process. We proved  that versions of the charge-oriented electric circuit models (based on MNA) fall into these classes. Furthermore,  we showed that dynamic iteration processes of such PH-DAE systems yield in a certain setup again PH-DAEs. The splitting error enters the respective Hamiltonian as an additional term. 

In particular, we  included  nonlinear dissipative terms in the PH-DAE setup and we added DAE specific subspace restrictions. Moreover, dissipativity of electric circuits is here treated very generally by assuming the existence of according gradient fields. A discussion on structural properties (in our case with respect to the differential index) reveals that known index results translate to our new PH-DAE settings. 

We believe that our concepts of PH-DAEs can be applied also to other DAEs, in particular to DAEs stemming from multibody systems and flow networks.

The next steps include the development of discretizations, which respect the PH-DAE structure and preserve in this way the energy  in order to enable fully discrete systems with the same properties. 
\if 0
\section*{Diskussionspunkte}
\begin{itemize}
    \item Gauss-Seidel: PHS-Kopplungsstruktur wohl möglich, wenn man das letzte System der alten und die ersten $k-1$ der neuen Iteration als 1 System zusammenfasst. Dann aber Sonderbehandlung am Anfang und Ende
\end{itemize}
Argumentationskette für dynamische Iteration:
\begin{itemize}
    \item dynamische iteration beruht auf partitionierten Systemen
    \item Information von einem zum nächsten Teilsystem geht über alte Iterierte (entspricht Eingänge) zu neuen Iterierten (entspricht Ausgängen)
    \item Das definiert Teilsysteme mit Ein- und Ausgängen
    \ Frage: sind diese Teilsysteme such PH-DAE, kann man alle Teilsysteme als ein großes PH-DAE-System interpretieren, wenn ja, wie hängt das mit dem  ursprünglichen System zusammen
    \item Welche PHS-Größen werden somit in einem Iterationszyklus bei exakter Integration erhalten
    \item wie muss die Zeitintegration aussehen, so dass dies auch für die diskreten Systeme gilt?
    \item Von Iteration zu Iteration: Liefert dies wieder eine PH-Struktur?
    \item kann man PHS-erhaltende GARK-Verfahren schaffen?
\end{itemize}

\newpage
Weitere Referenzen: 
\begin{itemize}
\item Arjan \& Bernhard
\item Timo \& Arjan 
\item ...
\end{itemize}
\fi

\bibliography{bartel}{}

\begin{thebibliography}{10}

\bibitem{And91}
B.~Andr\'asfai.
\newblock {\em Graph Theory: Flows, Matrices}.
\newblock Taylor \& Francis, New York London, 1991.

\bibitem{AG01}
M.~Arnold and M.~G{\"u}nther.
\newblock Preconditioned dynamic iteration for coupled differential-algebraic
  systems.
\newblock {\em BIT}, 41:1--25, 2001.

\bibitem{BBS11}
A.~Bartel, S.~Baumanns, and S.~Sch\"ops.
\newblock Structural analysis of electrical circuits including
  magnetoquasistatic devices.
\newblock {\em Appl. Num. Math.}, 61:1257--1270, 2011.

\bibitem{BaGu2018}
A.~Bartel and M.~Günther.
\newblock {PDAE}s in refined electrical network modeling.
\newblock {\em SIAM Review}, 60(1):56--91, 2018.

\bibitem{Beattie_2018}
C.~Beattie, V.~Mehrmann, H.~Xu, and H.~Zwart.
\newblock Linear port-{Hamiltonian} descriptor systems.
\newblock {\em Mathematics of Control, Signals and Systems}, 30:17:1:27, 2018.

\bibitem{Burrage_1995}
K.~Burrage.
\newblock {\em Parallel and sequential methods for ordinary differential
  equations}.
\newblock Clarendon Press, 1995.

\bibitem{Dies17}
R.~Diestel.
\newblock {\em Graph theory}, volume 173 of {\em Graduate Texts in
  Mathematics}.
\newblock Springer, Berlin, 5th edition, 2017.

\bibitem{Estevez-Schwarz_2000aa}
D.~{Est\'evez Schwarz} and C.~Tischendorf.
\newblock Structural analysis of electric circuits and consequences for {MNA}.
\newblock {\em International Journal of Circuit Theory and Applications},
  28:131--162, 2000.

\bibitem{GeHaReVdS20}
H.~Gernandt, F.~Haller, T.~Reis, and A.~{van der Schaft}.
\newblock Port-{H}amiltonian formulation of nonlinear electrical circuits.
\newblock Preprint 2020-09, Hamburger Beitr\"age zur Angewandten Mathematik,
  2020.
\newblock https://arXiv:2004.10821.

\bibitem{Griepentrog_1986}
E.~Griepentrog and R.~März.
\newblock {\em Differential-Algebraic Equations and Their Numerical Treatment}.
\newblock Teubner, 1986.

\bibitem{Guenther_1999}
M.~G{\"u}nther and U.~Feldmann.
\newblock Cad based electric circuit modeling in industry~i: Mathematical
  structure and index of network equations.
\newblock {\em Surv. Math. Ind.}, 8:97--129, 1999.

\bibitem{Hairer_2002}
E.~Hairer and G.~Wanner.
\newblock {\em Solving ordinary differential equation II: Stiff and
  differential-algebraic problems}.
\newblock Springer, 2. rev. ed. edition, 2002.

\bibitem{ZK96}
Z.~Jackiewicz and M.~Kwapisz.
\newblock Convergence of waveform relaxation methods for differential-algebraic
  systems.
\newblock {\em SIAM J. Numer. Anal.}, 33:2303--2317, 1996.

\bibitem{JvdS14}
D.~Jeltsema and A.~{van der Schaft}.
\newblock Port-{H}amiltonian systems theory: An introductory overview.
\newblock {\em Found.~Trends Systems Control}, 1(2-3):173--387, 2014.

\bibitem{LamoMarz13}
R.~Lamour, R.~M{\"a}rz, and C.~Tischendorf.
\newblock {\em Differential Algebraic Equations: A Projector Based Analysis},
  volume~1 of {\em Differential-Algebraic Equations Forum}.
\newblock Springer-Verlag, Heidelberg-Berlin, 2013.

\bibitem{LRS82}
E.~Lelarasmee, A.~Ruehli, and A.~Sangiovanni-Vincentelli.
\newblock The waveform relaxation method for time doma in analysis of large
  scale integrated circuits.
\newblock {\em IEEE Trans. CAD of IC and Syst.}, 1:131--145, 1982.

\bibitem{MvdS18}
B.~Maschke and A.~{van der Schaft}.
\newblock Generalized port-{H}amiltonian {DAE} systems.
\newblock {\em Systems \& Control Letters}, 121:31--37, 2018.

\bibitem{MaVdS19}
B.~Maschke and A.~{van der Schaft}.
\newblock {D}irac and {L}agrange algebraic constraints in nonlinear
  port-{H}amiltonian systems.
\newblock Preprint, Rijksuniversiteit Groningen \& Universit\'e de Lyon, 2019.
\newblock https://arXiv:1909.07025.

\bibitem{McCalla_1988}
W.~Mc{C}alla.
\newblock {\em Fundamentals of Computer-Aided Circuit Simulation}.
\newblock Kluwer Academic Publishers, 1988.

\bibitem{MehM19ppt}
V.~Mehrmann and R.~Morandin.
\newblock Structure-preserving discretization for port-{H}amiltonian descriptor
  systems.
\newblock Preprint 05--2019, Inst. f. Mathematik, TU Berlin, 2019.
\newblock https://arXiv:1903.10451.

\bibitem{Pla74}
R.~Plastock.
\newblock Homeomorphisms between {B}anach spaces.
\newblock {\em Trans. Amer.Math. Soc.}, 200:169--183, 1974.

\bibitem{Rei14}
T.~Reis.
\newblock Mathematical modeling and analysis of nonlinear time-invariant {RLC}
  circuits.
\newblock In P.~Benner, R.~Findeisen, D.~Flockerzi, U.~Reichl, and
  K.~Sundmacher, editors, {\em Large-Scale Networks in Engineering and Life
  Sciences}, Modeling and Simulation in Science, Engineering and Technology,
  pages 125--198. Birkh\"auser, Basel, 2014.

\bibitem{vdS04}
A.~{van der Schaft}.
\newblock Port-{H}amiltonian systems: Network modeling and control of nonlinear
  physical systems.
\newblock In K.~Schlacher and H.~Irschnik, editors, {\em Advanced Dynamics and
  Control of Structures and Machines}, volume 444 of {\em CISM courses and
  lectures}, pages 127--167. Springer, Vienna, 2004.

\bibitem{vdS13}
A.~{van der Schaft}.
\newblock Port-{H}amiltonian differential-algebraic systems.
\newblock In A.~Ilchmann and T.~Reis, editors, {\em Surveys in
  Differential-Algebraic Equations I}, Differential-Algebraic Equations Forum,
  pages 173--226. Springer, Berlin Heidelberg, 2013.

\end{thebibliography}
\bibliographystyle{abbrv}
\end{document}